\theoremstyle{plain}
\newtheorem{proposition}{Proposition}
\newtheorem{theorem}[proposition]{Theorem}
\newtheorem{corollary}[proposition]{Corollary}
\newtheorem{lemma}[proposition]{Lemma}
\newtheorem{remark}[proposition]{Remark}
\newtheorem*{proposition*}{Proposition}
\newtheorem*{theorem*}{Theorem}
\newtheorem*{maintheorem*}{Main Theorem}
\newtheorem*{maincorollary*}{Main Corollary}
\newtheorem*{corollary*}{Corollary}
\newtheorem*{lemma*}{Lemma}
\newtheorem*{remark*}{Remark}
\newtheorem*{example*}{Example}
\def\co{\colon\thinspace}
\newcommand{\LN}{\mathcal{L}_0 N}
\newcommand{\LL}{\mathcal{L}_0 L}
\newcommand{\Z}{\mathbb{Z}}
\newcommand{\R}{\mathbb{R}}
\newcommand{\C}{\mathbb{C}}
\newcommand{\CP}{\C\mathbb{P}^{\infty}}
\newcommand{\x}{\mathrm{x}}
\begin{document}

\title[Novikov-symplectic cohomology]{Novikov-symplectic
cohomology and exact Lagrangian embeddings}

\author{Alexander F. Ritter}

\address{Department of Mathematics, M.I.T., Cambridge, MA 02139, USA.}

\email{ritter@math.mit.edu}

\date{version: 24 December 2008}


\begin{abstract}    
Let $N$ be a closed manifold satisfying a mild homotopy
assumption, then for any exact Lagrangian $L\subset T^*N$ the map
$\pi_2(L)\to \pi_2(N)$ has finite index. The homotopy assumption
is either that $N$ is simply connected, or more generally that
$\pi_m(N)$ is finitely generated for each $m\geq 2$. The manifolds
need not be orientable, and we make no assumption on the Maslov
class of $L$.

We construct the Novikov homology theory for symplectic
cohomology, denoted $SH^*(M;\underline{\Lambda}_{\alpha})$, and we
show that Viterbo functoriality holds. We prove that
$SH^*(T^*N;\underline{\Lambda}_{\alpha})$ is isomorphic to the
Novikov homology of the free loopspace. Given the homotopy
assumption on $N$, we show that this Novikov homology vanishes
when $\alpha\in H^1(\LN)$ is the transgression of a non-zero class
in $H^2(\widetilde{N})$. Combining these results yields the above
obstructions to the existence of $L$.
\end{abstract}

\maketitle



\section{Introduction}
Consider a disc cotangent bundle $(DT^*N , d \theta)$ of a closed
manifold $N^n$ together with its canonical symplectic form. We
want to find obstructions to the existence of embeddings $j \co
L^n \hookrightarrow DT^*N$ for which $j^*\theta$ is exact. These
are called \emph{exact Lagrangian embeddings}. For now assume that
all manifolds are orientable and that we use $\Z-$coefficients in
(co)homology.

Denote by $p \co  L \to N$ the composite of $j$ with the
projection to the base. Recall that the ordinary transfer map $p_!
\co H_*(N)\to H_*(L)$ is obtained by Poincar\'e duality and the
pull-back $p^*$, by composing
$$
p_! \co H_*(N) \to H^{n-*}(N) \to H^{n-*}(L) \to H_*(L).
$$

For the space $\LN$ of smooth contractible loops in $N$ such
transfer maps need not exist, as Poincar\'e duality no longer
holds. However, using techniques from symplectic topology, Viterbo
\cite{Viterbo1,Viterbo3} showed that there is a transfer
homomorphism
$$
\mathcal{L}p_! \co H_*(\LN) \to H_*(\LL)
$$
which commutes with the ordinary transfer map for $p$,
$$
\xymatrix{ H_{*}(\LL)
\ar@{<-_{)}}^{c_*}@<1ex>[d]\ar@{<-}[r]^{\mathcal{L}p_!} &
H_*(\LN)\ar@{<-_{)}}^{c_*}@<1ex>[d] \\
H_*(L) \ar@{<-}[r]^{p_!} & H_*(N) }
$$
where $c \co N \to \LN$ denotes the inclusion of constant loops.

For any $\alpha \in H^1(\LN)$, we can define the associated
Novikov homology theory, which is in fact homology with twisted
coefficients in the bundle of Novikov rings $\Lambda=\Z((t))$
associated to a singular cocycle representing $\alpha$. We denote
the bundle by $\underline{\Lambda}_{\alpha}$ and the Novikov
homology by $H_*(\LN;\underline{\Lambda}_{\alpha})$.

\begin{maintheorem*}
For all exact $L \subset T^*N$ and all $\alpha \in H^1(\LN)$,
there exists a commutative diagram
$$
\xymatrix{ H_{*}(\LL;\underline{\Lambda}_{(\mathcal{L}p)^*\alpha})
\ar@{<-}^{c_*}@<1ex>[d] \ar@{<-}[r]^-{\mathcal{L}p_!} &
H_*(\LN;\underline{\Lambda}_{\alpha}) \ar@{<-}^{c_*}@<1ex>[d] \\
H_*(L;c^*\underline{\Lambda}_{(\mathcal{L}p)^*\alpha})
\ar@{<-}[r]^-{p_!} & H_*(N;c^*\underline{\Lambda}_{\alpha}) }
$$
If $c^*\alpha=0$ then the bottom map becomes $p_! \otimes 1 \co
H_*(L) \otimes \Lambda \leftarrow H_*(N) \otimes \Lambda$.
\end{maintheorem*}

Suppose now that $N$ is simply connected. Then a nonzero class
$\beta\in H^2(N)$ defines a nonzero transgression $\tau(\beta)\in
H^1(\LN)$. The associated bundles
$\underline{\Lambda}_{\tau(\beta)}$ on $\LN$ and
$\underline{\Lambda}_{\tau(p^*\beta)}$ on $\mathcal{L}_0 L$
restrict to trivial bundles on $N$ and $L$.

Suppose $\tau(p^*\beta)=0\in H^1(\mathcal{L}_0L)$. Then the above
twisted diagram becomes
$$
\xymatrix{ H_{*}(\LL) \otimes \Lambda \ar@{<-_{)}}_{c_*}@<-1ex>[d]
\ar@{<-}[r]^{\mathcal{L}p_!} &
H_*(\LN;\underline{\Lambda}_{\tau(\beta)}) \ar@{<-}^{c_*}@<1ex>[d] \\
H_*(L) \otimes \Lambda \ar@{<<-}_{q_*}@<-1ex>[u]\ar@{<-}[r]^{p_!}
& H_*(N) \otimes \Lambda }
$$
where $q \co \LN \to N$ is the evaluation at $0$ map. If $N$ is
simply connected and $\beta \neq 0$, then we will show that
$H_*(\LN;\underline{\Lambda}_{\tau(\beta)})=0$, so the fundamental
class $[N] \in H_n(N)$ maps to $c_*[N]=0$. But
$\mathcal{L}p_!(c_*[N])=c_*p_{!}[N]=c_*[L]\neq 0$ since $c_*$ is
injective on $H_*(L)$. Therefore $\tau(p^*\beta)=0$ cannot be
true. This shows that $\tau \circ p^* \co  H^2(N) \to H^1(\LL)$ is
injective. Now, from the commutative diagram
$$
\xymatrix{ H^2(N) \ar@{->}^{p^*}[d]\ar@{->}[r]^-{\tau}_-{\sim} &
\textnormal{Hom}(\pi_2(N),\Z) \cong H^1(\LN) \ar@{->}@<-5ex>^{(\mathcal{L} p)^*}[d] \\
H^2(L) \ar@{->}[r]^-{\tau} & \textnormal{Hom}(\pi_2(L),\Z) \subset
H^1(\mathcal{L}_0 L) }
$$
we deduce that $p^* \co H^2(N)\to H^2(L)$ and
$\textnormal{Hom}(\pi_2(N),\Z) \to \textnormal{Hom}(\pi_2(L),\Z)$
must be injective. Thus we deduce:

\begin{maincorollary*}
If $L \subset T^*N$ is exact and $N$ is simply connected, then the
image of $p_* \co \pi_2(L) \to \pi_2(N)$ has finite index and $p^*
\co H^2(N)\to H^2(L)$ is injective.
\end{maincorollary*}

We emphasize that there is no assumption on the Maslov class of
$L$ in the statement -- this is in contrast to the results of
\cite{Nadler} and \cite{Fukaya-Seidel-Smith}: the vanishing of the
Maslov class is crucial for their argument. Also observe that if
$H^2(N)\neq 0$ then the corollary overlaps with Viterbo's result
\cite{Viterbo3} that there is no exact Lagrangian $K(\pi,1)$
embedded in a simply connected cotangent bundle.

We will prove that the corollary holds even when $N$ and $L$ are
not assumed to be orientable. A concrete application of the
Corollary is that there are no exact tori and no exact Klein
bottles in $T^*S^2$. We will also generalize the Corollary to
obtain a result in the non-simply connected setup:

\begin{corollary*}
Let $N$ be a closed manifold with finitely generated $\pi_m(N)$
for each $m\geq 2$. If $L \subset T^*N$ is exact then the image of
$p_* \co \pi_2(L) \to \pi_2(N)$ has finite index.
\end{corollary*}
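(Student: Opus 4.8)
The plan is to run the argument leading to the Main Corollary, but with the universal cover $\widetilde{N}$ in place of $N$ and with the general vanishing theorem $H_*(\LN;\underline{\Lambda}_{\tau(\beta)})=0$ for $0\neq\beta\in H^2(\widetilde{N})$, which is available under the finite-generation hypothesis on $\pi_{m}(N)$, $m\geq 2$. To avoid orientability assumptions I would carry everything out over the Novikov ring $\Lambda=\Z/2((t))$: the fundamental classes $[N]\in H_n(N;\Z/2)$ and $[L]\in H_n(L;\Z/2)$, the Viterbo transfer $p_!$, and the evaluation-at-$0$ map $q\co\LL\to L$ splitting $c_*$ (since $q\circ c=\mathrm{id}_L$) all then exist unconditionally, while the relevant $H^1$-classes stay integral, so the Main Theorem and the vanishing theorem apply over $\Z/2$ verbatim.

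Suppose for contradiction that the image of $p_*\co\pi_2(L)\to\pi_2(N)$ has infinite index. Since $\pi_2(N)$ is finitely generated, this image has rank strictly less than $\pi_2(N)$, so there is a nonzero $\phi\in\mathrm{Hom}(\pi_2(N),\Z)$ vanishing on it, and — after arranging $\phi$ to be invariant under the deck action — a nonzero class $\beta\in H^2(\widetilde{N})$ corresponding to $\phi$ under the Hurewicz and universal-coefficient isomorphisms $H^2(\widetilde{N};\Z)\cong\mathrm{Hom}(\pi_2(N),\Z)$. I would then set $\alpha=\tau(\beta)\in H^1(\LN)$, which is nonzero and satisfies $c^*\alpha=0$ because transgressed classes die on constant loops, so $\underline{\Lambda}_\alpha$ restricts to the trivial bundle on $N$. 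Passing to the pullback cover $\widehat{L}=L\times_N\widetilde{N}$ and the lift $\widetilde{p}\co\widehat{L}\to\widetilde{N}$ of $p$, naturality of the transgression gives $(\mathcal{L}p)^*\alpha=\tau(\widetilde{p}^*\beta)$; since $\pi_2(\widehat{L})=\pi_2(L)$ with $\widetilde{p}_*=p_*$, the class $\widetilde{p}^*\beta$ pairs trivially with $\pi_2(L)$, and hence (again using the invariance bookkeeping) $(\mathcal{L}p)^*\alpha=0$. Thus both $c^*\alpha=0$ and $(\mathcal{L}p)^*\alpha=0$, placing us in the situation of the last twisted diagram of the Introduction.

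Now I would combine the Main Theorem with the vanishing theorem. The latter gives $H_*(\LN;\underline{\Lambda}_\alpha)=0$, so $c_*[N]=0$ in that group, where $[N]=[N]\otimes 1\in H_n(N;\Z/2)\otimes\Lambda$. Commutativity of the twisted diagram then yields $0=\mathcal{L}p_!(c_*[N])=c_*(p_![N])=c_*([L]\otimes 1)$, using that the bottom map is $p_!\otimes 1$ (because $c^*\alpha=0$) and that $p_![N]=[L]$. But $c_*$ is split injective on $H_*(L;\Z/2)\otimes\Lambda$ via $q_*$, and $[L]\otimes 1\neq 0$, a contradiction. Hence the image of $p_*$ has finite index in $\pi_2(N)$.

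The hard part is the homotopy-theoretic interplay with $\pi_1(N)$. In the simply connected case $\tau\co H^2(N)\xrightarrow{\sim}\mathrm{Hom}(\pi_2(N),\Z)\cong H^1(\mathcal{L}N)$ is an isomorphism; here $\pi_1(\LN)$ is an extension of $\pi_1(N)$ by $\pi_2(N)$, and the transgression of a class from $H^2(\widetilde{N})$ only reaches the $c^*$-kernel of $H^1(\LN)$. So the delicate steps are: (i) producing, from the infinite-index hypothesis, a genuinely nonzero $\beta\in H^2(\widetilde{N})$ with $\tau(\beta)\neq 0$ and yet $(\mathcal{L}p)^*\tau(\beta)=0$ — this is where the finite generation of all $\pi_m(N)$, $m\geq 2$, and the structure of $\LN$ are used, via the cited vanishing theorem; and (ii) the naturality of this \emph{universal-cover transgression} for the lift $\widetilde{p}$ between pullback covers. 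Everything else — the existence of $p_!$ and of the splitting $q_*$, the injectivity of $c_*$, and the degeneration of the twisted diagram once $c^*\alpha=0$ and $(\mathcal{L}p)^*\alpha=0$ — is exactly as in the Main Corollary and requires no orientability once one works over $\Z/2$.
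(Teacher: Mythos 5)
Your proposal is correct in outline and follows essentially the same route as the paper's Section~6: pass to the universal cover $\widetilde{N}$, use a transgressed class $\tau(\beta)$ for a suitable $\beta\in H^2(\widetilde{N})$, invoke the vanishing theorem $H_*(\LN;\underline{\Lambda}_{\tau(\beta)})=0$ (this is where finite generation of $\pi_m(N)$, $m\geq 2$, enters), and run the twisted Viterbo functoriality diagram over $\Z/2((t))$ to force the contradiction $0=c_*[L]$. Your contradiction framing (infinite index $\Rightarrow$ a nonzero $\phi$ killing $p_*(\pi_2 L)$ $\Rightarrow$ $\tau(\widetilde{p}^*\beta)=0$ $\Rightarrow$ absurd) is logically identical to the paper's statement that $(\mathcal{L}p)^*$ is injective on transgressed classes; you correctly flag the $\pi_1(N)$-invariance bookkeeping (needed for $\tau(\beta)$ to live in $H^1(\LN)$ rather than just $H^1(\mathcal{L}\widetilde{N})$) as the delicate point, which the paper also passes over rather tersely.
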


This is innovative since in \cite{Fukaya-Seidel-Smith},
\cite{Nadler}, and \cite{Viterbo3} it is crucial that $N$ is
simply connected.

The outline of the proof of the corollary required showing that
the Novikov homology $H_*(\LN;\underline{\Lambda}_{\tau(\beta)})$
vanishes for nonzero $\beta \in H^2(\widetilde{N})$. The idea is
as follows. A class $\tau(\beta)\in
H^1(\mathcal{L}\widetilde{N})=H^1(\LN)$ gives rise to a cyclic
covering $\overline{\LN}$ of $\LN$. Let $t$ be a generator for the
group of deck transformations. The Novikov ring $\Lambda =
\Z((t))=\Z[[t]][t^{-1}]$ is the completion in $t$ of the group
ring $\Z[t,t^{-1}]$ of the cover. The Novikov homology is
isomorphic to $H_*(C_*(\overline{\LN}) \otimes_{\Z[t,t^{-1}]}
\Lambda)$.

Using the homotopy assumptions on $N$ it is possible to prove that
$H_*(\overline{\mathcal{L}_0 N})$ is finitely generated in each
degree. It then easily follows from the flatness of $\Lambda$ over
$\Z[t,t^{-1}]$ and from Nakayama's lemma that
$$
H_*(C_*(\overline{\LN}) \otimes_{\Z[t,t^{-1}]} \Lambda) \cong
H_*(\overline{\LN}) \otimes_{\Z[t,t^{-1}]} \Lambda =0.
$$

The outline of the paper is as follows. In section
\ref{SectionSymplecticCohomology} we recall the construction of
symplectic cohomology and we explain how the construction works
when we use twisted coefficients in the Novikov bundle of some
$\alpha \in H^1(\mathcal{L}N)$, which we call Novikov-symplectic
cohomology. In section \ref{SectionAbbondandoloSchwarzIsomorphism}
we recall Abbondandolo and Schwarz's construction
\cite{Abbondandolo-Schwarz} of the isomorphism between the
symplectic cohomology of $T^*N$ and the singular homology of the
free loopspace $\mathcal{L}N$, and we adapt the isomorphism to
Novikov-symplectic cohomology. In section
\ref{SectionViterboFunctoriality} we review the construction of
Viterbo's commutative diagram, and we show how this carries over
to the case of twisted coefficients. In section
\ref{SectionProofMainTheorem} we prove the main theorem and in
section \ref{SectionProofMainCorollary} we prove the main
corollary. In section
\ref{SubsectionNonSimplyConnectedCotangentBundles} we generalize
the corollary to the case of non-simply connected cotangent
bundles, and in section \ref{SubsectionUnorientedTheory} we extend
the results to the case when $N$ and $L$ are not assumed to be
orientable.
\\

\emph{Acknowledgements:} I would like to thank Paul Seidel for
suggesting this project.
%
%
\section{Symplectic cohomology}\label{SectionSymplecticCohomology}
We review the construction of symplectic cohomology, and refer to
\cite{Viterbo1} for details and to \cite{Seidel} for a survey and
for more references. We assume the reader is familiar with Floer
homology for closed manifolds, for instance see \cite{Salamon}.

\subsection{Liouville domain
setup}\label{SubsectionLiouvilleDomainSetup}
Let $(M^{2n},\theta)$ be a Liouville domain, that is
$(M,\omega=d\theta)$ is a compact symplectic manifold with
boundary and the Liouville vector field $Z$, defined by $i_Z
\omega = \theta$, points strictly outwards along $\partial M$. The
second condition is equivalent to requiring that $\alpha =
\theta|_{\partial M}$ is a contact form on $\partial M$, that is
$d\alpha = \omega|_{\partial M}$ and $\alpha \wedge
(d\alpha)^{n-1} > 0$ with respect to the boundary orientation on
$\partial M$.

The Liouville flow of $Z$ is defined for all negative time $r$,
and it parametrizes a collar $(-\infty,0]\times
\partial M$ of $\partial M$ inside $M$. So we may glue an infinite
symplectic cone $([0,\infty)\times \partial M,d(e^r\alpha))$ onto
$M$ along $\partial M$, so that $Z$ extends to $Z=\partial_r$ on
the cone. This defines the completion $\hat{M}$ of $M$,
$$
\hat{M}=M \cup_{\partial M} [0,\infty)\times \partial M.
$$
We call $(-\infty,\infty)\times \partial M$ the collar of
$\hat{M}$. We extend $\theta$ to the entire collar by $\theta=e^r
\alpha$, and $\omega$ by $\omega=d\theta$. Later on, it will be
convenient to change coordinates from $r$ to $\x=e^r$. The collar
will then be parametrized as the tubular neighbourhood $(0,\infty)
\times \partial M$ of $\partial M$ in $\hat{M}$, where $\partial
M$ corresponds to $\{\x = 1\}$.

Let $J$ be an $\omega-$compatible almost complex structure on
$\hat{M}$ which is of contact type on the collar, that is
$J^*\theta=e^r dr$ or equivalently $J\partial_r = \mathcal{R}$
where $\mathcal{R}$ is the Reeb vector field (we only need this to
hold for $e^r \gg 0$ so that a certain maximum principle applies
there). Denote by $g=\omega(\cdot,J\cdot)$ the $J-$invariant
metric.
%
\subsection{Reeb and Hamiltonian
dynamics}\label{SubsectionReebDynamics}
The Reeb vector field $\mathcal{R} \in C^{\infty}(T \partial M)$
on $\partial M$ is defined by $i_{\mathcal{R}} d\alpha = 0$ and
$\alpha(\mathcal{R})=1$. The periods of the Reeb vector field form
a countable closed subset of $[0,\infty)$.

For $H \in C^{\infty}(\hat{M},\R)$ we define the Hamiltonian
vector field $X_H$ by
$$
\omega(X_H,\cdot) = -dH.
$$
If inside $M$ the Hamiltonian $H$ is a $C^2$-small generic
perturbation of a constant, then the $1$-periodic orbits of $X_H$
inside $M$ are constants corresponding precisely to the critical
points of $H$.

Suppose $H=h(e^r)$ depends only on $e^r$ on the collar. Then $X_H=
h'(e^r) \mathcal{R}$. It follows that every non-constant
$1$-periodic orbit $x(t)$ of $X_H$ which intersects the collar
must lie in $\{ e^r \}\times \partial M$ for some $e^r$ and must
correspond to a Reeb orbit $z(t) = x(t/T) \co [0,T] \to
\partial M$ with period $T = h'(e^r)$. Since the Reeb periods
are countable, if we choose $h$ to have a generic constant slope
$h'(e^r)$ for $e^r \gg 0$ then there will be no $1$-periodic
orbits of $X_H$ outside of a compact set of $\hat{M}$.
%
\subsection{Action functional}
We define the action functional for $x\in C^{\infty}(S^1,M)$ by
$$
A_H(x) = - \int x^*\theta + \int_0^1 H(x(t)) \, dt.
$$
If $H=h(e^r)$ on the collar and $x$ is a $1$-periodic orbit of
$X_H$ in $\{ e^r \} \times
\partial M$, then
$$
A_H(x)=-e^r h'(e^r)+h(e^r).
$$
Let $\mathcal{L}\hat{M} = C^{\infty}(S^1,\hat{M})$ be the space of
free loops in $\hat{M}$. The differential of $A_H$ at $x\in
\mathcal{L}\hat{M}$ in the direction $\xi \in
T_x\mathcal{L}\hat{M} = C^{\infty}(S^1,x^*T\hat{M})$ is
$$
dA_H \cdot \xi = - \int_0^1 \omega(\xi, \dot{x} - X_H) \, dt.
$$
Thus the critical points $x\in \textnormal{Crit}(A_H)$ of $A_H$
are precisely the $1$-periodic Hamiltonian orbits
$\dot{x}(t)=X_H(x(t))$. Moreover, we deduce that with respect to
the $L^2-$metric $\int_0^1 g(\cdot,\cdot) \, dt$ the gradient of
$A_H$ is $\nabla A_H = J(\dot{x} - X_H)$.
%
\subsection{Floer's equation}\label{SubsectionFloersEquation}
For $u \co  \R \times S^1 \to M$, the negative $L^2-$gradient flow
equation $\partial_s u = -\nabla A_H(u)$ in the coordinates $(s,t)
\in \R \times S^1$ is Floer's equation
$$
\partial_s u + J(\partial_t u - X_H) = 0.
$$
The action $A_H(u(s,\cdot))$ decreases in $s$ along Floer
solutions, since
$$
\partial_s(A_H(u(s,\cdot)))=dA_H \cdot \partial_s u = - \int_0^1 \omega(\partial_s u, \partial_t u - X_H) \, dt
= - \int_0^1 |\partial_s u|_g^2 \, dt.
$$
Let $\mathcal{M}'(x_{-},x_{+})$ denote the moduli space of
solutions $u$ to Floer's equation, which at the ends converge
uniformly in $t$ to the $1$-periodic orbits $x_{\pm}$:
$$
\lim_{s \to \pm \infty} u(s,t) = x_{\pm}(t).
$$
These solutions $u$ occur in $\R-$families because we may
reparametrize the $\R$ coordinate by adding a constant. We denote
by $\mathcal{M}(x_{-},x_{+}) = \mathcal{M}'(x_{-},x_{+})/ \R$ the
space of unparametrized solutions.
%
\subsection{Energy}
For a Floer solution $u$ the energy is defined as
$$
E(u) = \int
|\partial_s u|^2 \, ds \, dt = \int \omega(\partial_s u,
\partial_t u - X_H)\, ds \, dt = - \int \partial_s(A_H(u)) \, ds.
$$
Thus for $u \in \mathcal{M}'(x_{-},x_{+})$ there is an a priori
energy estimate,
$$E(u) = A_H(x_{-}) - A_H(x_{+}).
$$
%
%
\subsection{Compactness and the maximum principle}
The only danger in this setup, compared to Floer theory for closed
manifolds, is that there may be Floer trajectories $u \in
\mathcal{M}(x_{-},x_{+})$ which leave any given compact set in
$\hat{M}$. However, for any Floer trajectory $u$, a maximum
principle applies to the function $e^r \circ u$ on the collar,
namely: on any compact subset $\Omega \subset \R \times S^1$ the
maximum of $e^r \circ u$ is attained on the boundary $\partial
\Omega$. Therefore, if the $x_{\pm}$ lie inside $M \cup ([0,R]
\times
\partial M)$ then also all the Floer trajectories in
$\mathcal{M}'(x_{-},x_{+})$ lie in there.

\begin{figure}
\includegraphics[width=0.25\textwidth]{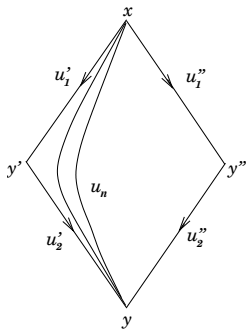}
\caption{The $x,y',y'',y$ are $1$-periodic orbits of $X_H$, the
lines are Floer solutions in $\hat{M}$. The $u_n \in
\mathcal{M}_1(x,y)$ are converging to the broken trajectory
$(u_1',u_2') \in \mathcal{M}_0(x,y')\times
\mathcal{M}_0(y',y)$.}\label{FigureNovikovSymplectic}
\end{figure}
%
\subsection{Transversality and
compactness}\label{SubsectionTransversalityCompactness}
Thanks to the maximum principle and the a priori energy estimates,
the same analysis as for Floer theory for closed manifolds can be
applied to show that for a generic time-dependent perturbation
$(H_t,J_t)$ of $(H,J)$ the corresponding moduli spaces
$\mathcal{M}(x_{-},x_{+})$ are smooth manifolds and have
compactifications $\overline{\mathcal{M}}(x_{-},x_{+})$ whose
boundaries are defined in terms of broken Floer trajectories
(Figure \ref{FigureNovikovSymplectic}). We write
$\mathcal{M}_k(x_{-},x_{+})=\mathcal{M}_{k+1}'(x_{-},x_{+})/\R$
for the $k$-dimensional part of $\mathcal{M}(x_{-},x_{+})$.

The perturbation of $(H,J)$ ensures that the differential
$D\phi_{X_H}^1$ of the time $1$ return map does not have
eigenvalue $1$, where $\phi_{X_H}^t$ is the flow of $X_H$. This
non-degeneracy condition ensures that the $1$-periodic orbits of
$X_H$ are isolated and it is used to prove the transversality
results. In the proofs of compactness, the exactness of $\omega$
is used to exclude the possibility of bubbling-off of
$J-$holomorphic spheres.

To keep the notation under control, we will continue to write
$(H,J)$ even though we are using the perturbed $(H_t,J_t)$
throughout.
%
\subsection{Floer chain complex}\label{SubsectionFloerChainCx}
The Floer chain complex for a Hamiltonian $H \in
C^{\infty}(\hat{M},\R)$ is the abelian group freely generated by
$1$-periodic orbits of $X_H$,
$$
CF^*(H) =\bigoplus \left\{ \Z x : x \in \mathcal{L}\hat{M},\;
\dot{x}(t) = X_H(x(t)) \right\},
$$
and the differential $\partial$ on a generator $y \in
\textnormal{Crit}(A_H)$ is defined as
$$
\partial y = \sum_{u\in \mathcal{M}_0(x,y)} \epsilon(u)\, x,
$$
where $\mathcal{M}_0(x,y)$ is the $0-$dimensional part of
$\mathcal{M}(x,y)$ and the sign $\epsilon(u)\in \{\pm 1\}$ is
determined by the choices of compatible orientations.

We may also filter the Floer complexes by action values $A,B \in
\R \cup \{\pm \infty\}$:
$$
CF^*(H;A,B) =\bigoplus \left\{ \Z x : x \in \mathcal{L}\hat{M},\;
\dot{x}(t) = X_H(x(t)), \; A<A_H(x)<B \right\}.
$$
This is a quotient complex of $CF^*(H)$ if $B\neq \infty$. Observe
that increasing $A$ gives a subcomplex, $CF^*(H;A',B) \subset
CF^*(H;A,B)$ for $A<A'<B$. Moreover there are natural
action-restriction maps $CF^*(H;A,B) \to CF^*(H;A,B')$ for
$A<B'<B$, because the action decreases along Floer trajectories.

Standard methods show that $\partial^2=0$, and we denote by
$HF^*(H)$ and $HF^*(H;A,B)$ the cohomologies of these complexes.
%
\subsection{Continuation maps}\label{SubsectionContinuationMaps}
One might hope that the continuation method of Floer homology can
be used to define a homomorphism between the Floer complexes
$CF^*(H_{-})$ and $CF^*(H_{+})$ obtained for two Hamiltonians
$H_{\pm}$. This involves solving the parametrized version of
Floer's equation
$$
\partial_s u + J_s(\partial_t u - X_{H_s}) = 0,
$$
where $J_s$ are $\omega-$compatible almost complex structures of
contact type and $H_s$ is a homotopy from $H_{-}$ to $H_{+}$ (i.e.
an $s-$dependent Hamiltonian with $(H_s,J_s)=(H_{-},J_{-})$ for $s
\ll 0$ and $(H_s,J_s)=(H_{+},J_{+})$ for $s \gg 0$). If $x$ and
$y$ are respectively $1$-periodic orbits of $X_{H_{-}}$ and
$X_{H_{+}}$, then we can define a moduli space $\mathcal{M}(x,y)$
of such solutions $u$ which converge to $x$ and $y$ at the ends.
This time there is no freedom to reparametrize $u$ in the
$s-$variable.

The action $A_{H_s}(u(s,\cdot))$ along such a solution $u$ will
vary as follows
$$
\partial_s(A_{H_s}(u(s,\cdot)))=- \int_0^1 |\partial_s u|^2 \, dt + \int_0^1 (\partial_sH_s)(u)\,
dt,
$$
so the action decreases if $H_s$ is monotone decreasing,
$\partial_sH_s \leq 0$. The energy is
$$
E(u) = \int |\partial_s u|_{g_s}^2 \, ds \wedge dt =
A_{H_-}(x_{-}) - A_{H_+}(x_{+}) + \int (\partial_s H_s)(u) \, ds
\wedge dt,
$$
so an a priori bound will hold if $\partial_s H_s \leq 0$ outside
of a compact set in $\hat{M}$.

If $H_s=h_s(e^r)$ on the collar and $\partial_s h_s' \leq 0$, then
a maximum principle for $e^r \circ u$ as before will hold on the
collar (we refer to \cite{Seidel} for a very clear proof) and
therefore it automatically guarantees a bound on $(\partial_s
H_s)(u)$ and thus an a priori energy bound.

Thus, if outside of a compact in $\hat{M}$ we have $H_s=h_s(e^r)$
and $\partial_s h_s' \leq 0$, then (after a generic $C^2$-small
time-dependent perturbation of $(H_s,J_s)$) the moduli space
$\mathcal{M}(x,y)$ will be a smooth manifold with a
compactification $\overline{\mathcal{M}}(x,y)$ by broken
trajectories and a continuation map $\phi \co  CF^*(H_{+}) \to
CF^*(H_{-})$ can be defined: on a generator $y \in
\textnormal{Crit}(A_{H_{+}})$,
$$
\phi(y)= \sum_{v\in \mathcal{M}_0(x,y)} \epsilon(v)\, x,
$$
where $\mathcal{M}_0(x,y)$ is the $0-$dimensional part of
$\mathcal{M}(x,y)$ and $\epsilon(v)\in \{ \pm 1\}$ depends on
orientations. Standard methods show that $\phi$ is a chain map and
that these maps compose well: given homotopies from $H_{-}$ to $K$
and from $K$ to $H_{+}$, each satisfying the condition $\partial_s
h_s' \leq 0$ outside of a compact in $\hat{M}$, then the composite
$CF^*(H_{+}) \to CF^*(K) \to CF^*(H_{-})$ is chain homotopic to
$\phi$. So on cohomology, $\phi \co HF^*(H_{+}) \to HF^*(H_{-})$
equals the composite $HF^*(H_{+}) \to HF^*(K) \to HF^*(H_{-})$.

For example, a ``compactly supported homotopy'' is one where $H_s$
is independent of $s$ outside of a compact ($\partial_s H_s =0$
for $s \gg 0$). Continuation maps for $H_s$ and $H_{-s}$ can then
be defined and they will be inverse to each other up to chain
homotopy.
%
\subsection{Symplectic cohomology using only one
Hamiltonian}\label{SubsectionSymplecticCohUsingOneHam}
We change coordinates from $r$ to $\x=e^r$, so the collar is now
$(0,\infty) \times
\partial M \subset \hat{M}$ and $\partial M = \{\x = 1\}$.

Take a Hamiltonian $H_{\infty}$ with $H_{\infty}=h(\x)$ for $\x
\gg 0$, such that $h'(\x) \to \infty$ as $\x \to \infty$. The
symplectic cohomology is defined as the cohomology of the
corresponding Floer complex (after a $C^2$-small time-dependent
perturbation of $(H_{\infty},J)$),
$$
SH^*(M;H_{\infty}) = HF^*(H_{\infty}).
$$
The technical difficulty lies in showing that it is independent of
the choices $(H_{\infty},J)$.
%
\subsection{Symplectic cohomology with action bounds}\label{SubsectionSymplecticCohWithActionBounds}
Similarly one defines the groups
$SH^*(M;H_{\infty};A,B)=HF^*(M;H_{\infty};A,B)$, but these now
depend on the choice of $H_{\infty}$. However, for $B=\infty$,
taking the direct limit as $A \to -\infty$ yields
$$
\lim_{\longrightarrow} SH^*(M;H_{\infty};A,\infty) =
SH^*(M;H_{\infty}),
$$
since $CF^*(H_{\infty};A,\infty)$ are subcomplexes exhausting
$CF^*(H_{\infty};-\infty,\infty)$ as $A \to -\infty$.

If we use action bounds, then it is sometimes possible to vary the
Hamiltonian without using continuation maps. Let $H_1=h_1(\x)$ for
$\x \geq \x_0$, and suppose $A_{h_1}(\x)=-\x h_1'(\x) + h_1(\x) <
A$ for $\x \geq \x_0$. Let $H_2=H_1$ on $M\cup \{ \x \leq \x_0\}$
and $H_2=h_2(\x)$ with $A_{h_2}(\x)<A$ for $\x\geq \x_0$ (e.g. if
$h_2''\geq 0$). Then
$$
CF^*(H_1;A,B) = CF^*(H_2;A,B)
$$
are equal as complexes: the orbits in $\{ \x \geq \x_0\}$ get
discarded by the action bounds; the orbits agree in $M\cup \{\x
\leq \x_0\}$ since $H_1=H_2$ there; and the differential on these
common orbits is the same because the maximum principle forces the
Floer trajectories to lie in $M\cup \{ \x \leq \x_0\}$, where
$H_1=H_2$, so the Floer equations agree.

For example, let $H_1=h_1(\x)=\frac{1}{2}\x^2$ on $\x > 0$, so
$A_{h_1}(\x)=-\frac{1}{2} \x^2$. Take $H_2=H_1$ on $M \cup \{\x
\leq \x_0\}$ and extend $H_2$ linearly on $\{ \x \geq \x_0 \}$.
Then
$CF^*(H_1;-\frac{1}{2}\x_0^2;\infty)=CF^*(H_2;-\frac{1}{2}\x_0^2;\infty)$.
By this trick, $SH^*(M;H_{\infty};A,\infty)$ can be computed by a
Hamiltonian which is linear at infinity, and so
$SH^*(M;H_{\infty})$ can be computed as a direct limit using
Hamiltonians which are linear at infinity and whose slopes at
infinity become steeper and steeper. We now make this precise.
%
\subsection{Hamiltonians linear at
infinity}\label{SubsectionHamiltoniansLinearAtInfty}
Consider Hamiltonians $H$ which equal
$$
h^m_{c,C}(\x) = m(\x-c)+C
$$
for $\x \gg 0$. We assume that the slope $m>0$ does not occur as
the value of the period of any Reeb orbit. If $H_s$ is a homotopy
from $H_{-}$ to $H_{+}$ among such Hamiltonians, i.e.
$H_s=h^{m_s}_{c_s,C_s}(\x)$ for $x \gg 0$, then the maximum
principle (and hence a priori energy bounds for continuation maps)
will hold if
$$
\partial_s \partial_{\x} h^{m_s}_{c_s,C_s} = \partial_s m_s \leq
0.
$$
Suppose that $\partial_s m_s \leq 0$, satisfying $m_s = m_{-}$ for
$s \ll 0$ and $m_s = m_{+}$ for $s\gg 0$, and suppose that the
action values $A_{H_s}(x)$ of $1$-periodic orbits $x$ of $X_{H_s}$
never cross the action bounds $A,B$. Then a continuation map can
be defined,
$$
\phi \co  CF^*(H_{+};A,B) \to CF^*(H_{-};A,B).
$$
These maps compose well: $\phi' \circ \phi''$ is chain homotopic
to $\phi$ (where to define $\phi'$, $\phi''$ we use $m_s'$ varying
from $m_{-}$ to some $m$, $m_s''$ varying from $m$ to $m_{+}$, and
the analogous assumptions as above hold). For example if we vary
only $c,C$, and not $m$, then $\partial_s m_s = 0$ outside of a
compact and the continuation map $\phi$ for $H_s$ can be inverted
(up to chain homotopy) by using the continuation map for $H_{-s}$.
Thus, up to isomorphism, $HF^*(H)$ is independent of the choice of
the constants $c,C$ in $h^m_{c,C}$.
%
\subsection{Symplectic cohomology as a direct
limit}\label{SubsectionSymplCohAsDirectLimit}
Suppose $H_{\infty}=h(\x)$ for $\x \gg 0$ and $h'(\x)\to \infty$
as $\x \to \infty$. Suppose also that $\x h''(\x) > \delta
> 0$ for $x \gg 0$. This implies that $\partial_{\x} A_h =
-\x h''(\x)<-\delta$ so $A_h$ decreases to $-\infty$ as $\x \to
\infty$.

Given $A\in\R$, suppose $A_h(\x)=-\x h'(\x) + h(\x) < A$ for $\x
\geq \x_0$. Define $H = H_{\infty}$ on $M\cup \{\x \leq \x_0 \}$
and extend $H$ linearly in $\x$ for $\x \geq \x_0$. Then
$CF^*(H;A,B)=CF^*(H_{\infty};A,B)$, and $CF^*(H;A,B)$ is a
subcomplex of $CF^*(H_{\infty};-\infty,B)$.

Decreasing $A$ to $A'<A$ defines some Hamiltonian $H'$ which is
steeper at infinity, and it induces a continuation map
$CF^*(H;A,B) \to CF^*(H';A',B)$. The direct limit over these
continuation maps yields a chain isomorphism
$$
\lim_{\longrightarrow} CF^*(H;A,B) \to CF^*(H_{\infty};-\infty,B),
$$
which by the exactness of direct limits induces an isomorphism on
cohomology
$$
\lim_{\longrightarrow} HF^*(H;A,B) \to
SH^*(M;H_{\infty};-\infty,B).
$$
So an alternative definition is
$$
SH^*(M) = \lim_{\longrightarrow} HF^*(H),
$$
where the direct limit is over the continuation maps for all the
Hamiltonians which are linear at infinity, ordered by increasing
slopes $m>0$. In the above argument, we chose particular $H$ which
approximated $H_{\infty}$ on larger and larger compacts. However,
the direct limit can be taken over any family of $H$ with slopes
at infinity $m \to \infty$ because, up to an isomorphism induced
by a continuation map, $HF^*(H)$ is independent of the choice of
$H$ for fixed $m$, so any two cofinal families ($m \to \infty$)
will give the same limit up isomorphism.
%
\subsection{Novikov bundles of coefficients}\label{SubsectionNovikovbundles}
We recommend \cite{Whitehead} as a reference on local systems. Let
$\mathcal{L}N=C^{\infty}(S^1,N)$ denote the free loopspace of a
manifold $N$, and let $\mathcal{L}_0 N$ be the component of
contractible loops. The Novikov ring
$$
\Lambda = \Z((t))=\Z[[t]][t^{-1}]
$$
is the ring of formal Laurent series. Let $\alpha$ be a singular
cocycle representing $a \in H^1(\mathcal{L} N)$. The Novikov
bundle $\underline{\Lambda}_{\alpha}$ is the local system of
coefficients on $\mathcal{L} N$ defined by a copy
$\Lambda_{\gamma}$ of $\Lambda$ over each loop $\gamma \in
\mathcal{L} N$ and by the multiplication isomorphism
$t^{\alpha[u]} \co  \Lambda_{\gamma} \to \Lambda_{\gamma'}$ for
each path $u$ in $\mathcal{L} N$ connecting $\gamma$ to $\gamma'$,
where $\alpha[\cdot] \co  C_1(\mathcal{L} N) \to \Z$ is evaluation
on singular one-chains. A different choice of representative
$\alpha$ for $a$ gives an isomorphic local system, so by abuse of
notation we write $\underline{\Lambda}_{a}$ instead of
$\underline{\Lambda}_{\alpha}$ and $a[u]$ instead of $\alpha[u]$.

We will be using the Novikov bundle
$\underline{\Lambda}_{\tau(\beta)}$ on $\LN$ corresponding to the
transgression $\tau(\beta)\in H^1(\LN)$ of some $\beta\in H^2(N)$
(see \ref{SubsectionTransgressions}). This bundle pulls back to a
trivial bundle under the inclusion of constant loops $c \co  N \to
\LN$, since the transgression $\tau(\beta)$ vanishes on $\pi_1(N)
\subset \pi_1(\LN)$. Therefore we just get ordinary cohomology
with coefficients in the ring $\Lambda$,
$$
H^*(N;c^*\underline{\Lambda}_{\tau(\beta)})\cong H^*(N;\Lambda).
$$
Moreover, for any map $j \co L \to T^*N$ the projection $p \co
L\to T^*N \to N$ induces a map $\mathcal{L} p \co  \mathcal{L}_0 L
\to \LN$, and the pull-back of the Novikov bundle is
$$
(\mathcal{L}p)^*\underline{\Lambda}_{\tau(\beta)} \cong
\underline{\Lambda}_{(\mathcal{L}p)^*(\tau(\beta))} \cong
\underline{\Lambda}_{\tau(p^*\beta)}.
$$
If $\tau(p^*\beta)=0 \in H^1(\mathcal{L}_0 L)$, then this is a
trivial bundle and
$$
H_*(\mathcal{L}_0
L;(\mathcal{L}p)^*\underline{\Lambda}_{\tau(\beta)}) \cong
H_*(\mathcal{L}_0 L)\otimes\Lambda.
$$
%
%
\subsection{Novikov-Floer
cohomology}\label{SubsectionNovikovFloerCohomology}
Let $(M^{2n},\theta)$ be a Liouville domain
(\ref{SubsectionLiouvilleDomainSetup}). Let $\alpha$ be a singular
cocyle representing a class in $H^1(\mathcal{L} M) \cong
H^1(\mathcal{L} \hat{M})$. We define the Novikov-Floer chain
complex for $H \in C^{\infty}(\hat{M},\R)$ with twisted
coefficients in $\underline{\Lambda}_{\alpha}$ to be the
$\Lambda-$module freely generated by the $1$-periodic orbits of
$X_H$,
$$
CF^*(H;\underline{\Lambda}_{\alpha}) =\bigoplus \left\{ \Lambda x
: x \in \mathcal{L} \hat{M},\; \dot{x}(t) = X_H(x(t)) \right\},
$$
and the differential $\delta$ on a generator $y \in
\textnormal{Crit}(A_H)$ is defined as
$$
\delta y = \sum_{u\in \mathcal{M}_0(x,y)} \epsilon(u)\,
t^{\alpha[u]}\,  x,
$$
where $\mathcal{M}_0(x,y)$ and $\epsilon(u)\in \{\pm 1\}$ are the
same as in (\ref{SubsectionFloerChainCx}). The new factor
$t^{\alpha[u]}$ which appears in the differential is precisely the
multiplication isomorphism $\Lambda_{x} \to \Lambda_{y}$ of the
local system $\underline{\Lambda}_{\alpha}$ which identifies the
$\Lambda-$fibres over $x$ and $y$.

As in the untwisted case, we assume that a generic $C^2$-small
time-dependent perturbation of $(H,J)$ has been made so that the
transversality and compactness results of
(\ref{SubsectionTransversalityCompactness}) for the moduli spaces
$\mathcal{M}(x,y)$ are achieved.

\begin{proposition}
$(CF^*(H;\underline{\Lambda}_{\alpha});\delta)$ is a chain
complex, i.e. $\delta\circ\delta = 0$.
\end{proposition}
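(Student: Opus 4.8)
The plan is to reduce $\delta\circ\delta=0$ to the already-established fact that $\partial\circ\partial=0$ in the untwisted complex, by a careful bookkeeping of the exponents $t^{\alpha[u]}$ attached to each Floer trajectory. Fix $1$-periodic orbits $x$ and $z$ of $X_H$ with Conley--Zehnder indices differing by $2$, so that the relevant broken trajectories live in the compactification $\overline{\mathcal M}_1(x,z)$. First I would write out the coefficient of $z$ in $\delta\delta x$: it is
\[
\sum_{y}\ \sum_{(u_1,u_2)\in\mathcal M_0(x,y)\times\mathcal M_0(y,z)}\ \epsilon(u_1)\,\epsilon(u_2)\,t^{\alpha[u_1]}\,t^{\alpha[u_2]}\,z,
\]
where the sum is over intermediate orbits $y$. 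The exponents combine additively: $t^{\alpha[u_1]}t^{\alpha[u_2]}=t^{\alpha[u_1]+\alpha[u_2]}$, so the key point is to show that for each broken trajectory $(u_1,u_2)$ the total exponent $\alpha[u_1]+\alpha[u_2]$ equals $\alpha[u]$ for any trajectory $u\in\mathcal M_1(x,z)$ whose boundary in $\overline{\mathcal M}_1(x,z)$ contains $(u_1,u_2)$ — in other words, the exponent is locally constant on the one-dimensional moduli space $\overline{\mathcal M}_1(x,z)$.

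The heart of the argument is this locally-constant claim. Each trajectory $u$ (broken or not) from $x$ to $z$ determines a path in $\mathcal L\hat M$ from the point $x\in\mathcal L\hat M$ to the point $z\in\mathcal L\hat M$, by reading off the loops $u(s,\cdot)$ as $s$ runs from $-\infty$ to $+\infty$ (for a broken trajectory one concatenates the paths coming from $u_1$ and $u_2$, which is why the exponents add). Two trajectories lying in the same connected component of $\overline{\mathcal M}_1(x,z)$ give homotopic (rel endpoints) paths in $\mathcal L\hat M$: the compactified one-dimensional moduli space is itself a one-manifold, and sliding along it produces an explicit homotopy of the associated paths. Since $\alpha$ is a cocycle, its evaluation $\alpha[\cdot]$ on a path depends only on the homotopy class rel endpoints; hence $\alpha[u_1]+\alpha[u_2]=\alpha[u]$ as claimed. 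Once this is in hand, I factor $t^{\alpha[u]}$ (for a chosen reference $u$ in each component of $\overline{\mathcal M}_1(x,z)$) out of the sum, and what remains multiplying it is exactly $\sum\epsilon(u_1)\epsilon(u_2)$, the coefficient of $z$ in $\partial\partial x$ over $\Z$. The standard gluing and compactness theorem — available here by (\ref{SubsectionTransversalityCompactness}), since the maximum principle and a priori energy estimates give the same analytic package as in the closed case — says that each component of $\overline{\mathcal M}_1(x,z)$ is a compact one-manifold with exactly two boundary points, each a broken trajectory, and that with coherent orientations the two boundary points contribute opposite signs. Therefore $\sum\epsilon(u_1)\epsilon(u_2)=0$ within each component, and summing over components and over $z$ gives $\delta\delta=0$.

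The main obstacle is the locally-constant claim for the exponent, i.e. verifying that trajectories in the same component of $\overline{\mathcal M}_1$ induce path-homotopic arcs in $\mathcal L\hat M$ and that breaking does not change the homotopy class — this is where one must be slightly careful that the gluing map is compatible with concatenation of paths in the loop space and that the homotopy extends over the boundary strata. Everything else (additivity of exponents, factoring, the sign cancellation) is formal or quoted. One should also note that the sums are finite in each fixed action window and that the $t$-adic topology on $\Lambda$ makes the full (possibly infinite) sum over all $y$ well-defined; this follows from the action filtration of (\ref{SubsectionFloerChainCx}), since $\alpha[u]$ grows with the action drop $A_H(x)-A_H(y)$, so only finitely many broken trajectories contribute below any power of $t$. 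These convergence remarks I would state but not belabor.
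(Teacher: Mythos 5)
Your proof is correct and takes essentially the same route as the paper: both arguments rest on the observation that, because $\alpha$ is a cocycle, the total exponent $\alpha[u_1]+\alpha[u_2]$ is the same at the two boundary points of each connected component of $\overline{\mathcal{M}}_1(x,z)$ --- the paper phrases this as evaluating $d\alpha=0$ on the $2$-chain traced out by that component in $\mathcal{L}\hat{M}$, while you phrase it as homotopy-invariance of $\alpha[\cdot]$ on paths in $\mathcal{L}\hat{M}$, which is the same calculation --- and then the usual $\Z$-orientation-sign cancellation from gluing goes through unchanged. One small inaccuracy: in your closing convergence remark, the assertion that ``$\alpha[u]$ grows with the action drop $A_H(x)-A_H(y)$'' has no justification for a general singular cocycle $\alpha$, so the proposed $t$-adic convergence argument is not sound; it is also unnecessary here, since for the Hamiltonians in play there are only finitely many $1$-periodic orbits above any fixed action level, so all sums appearing in $\delta$ and $\delta\circ\delta$ are finite and no completion argument is required.
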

\begin{proof}
We mimick the proof that $\partial^2=0$ in Floer homology (see
\cite{Salamon}). Observe Figure \ref{FigureNovikovSymplectic}. A
sequence $u_n \in \mathcal{M}_2'(x,y)$ converges to a broken
trajectory $(u_1',u_2')\in \mathcal{M}_1'(x,y') \times
\mathcal{M}_1'(y',y)$, in the sense that there are $s_n\to
-\infty$ and $S_n\to \infty$ with
$$
u_n(s_n+\cdot,\cdot) \to u_1' \textnormal{ and }
u_n(S_n+\cdot,\cdot) \to u_2' \textnormal{ both in }
C^{\infty}_{\textnormal{loc}};
$$
Conversely given such $(u_1',u_2')$ there is a curve $u \co [0,1)
\to \mathcal{M}_2'(x,y)$, unique up to reparametrization and up to
the choice of $u(0)\in\mathcal{M}_2'(x,y)$, which approaches
$(u_1',u_2')$ as $r \to 1$, and the curve is orientation
preserving iff $\epsilon(u_1')\epsilon(u_2')=1$.

So the boundary of $\overline{\mathcal{M}}_1(x,y)$ is parametrized
by $\mathcal{M}_0(x,y') \times \mathcal{M}_0(y',y)$. The value of
$d\alpha=0$ on the connected component of $\mathcal{M}_1(x,y)$
shown in Figure \ref{FigureNovikovSymplectic} is equal to the sum
of the values of $\alpha$ over the broken trajectories,
$$
\alpha[u_1'] + \alpha[u_2'] = \alpha[u_1''] + \alpha[u_2''],
$$
and since
$\epsilon(u_1')\epsilon(u_2')=-\epsilon(u_1'')\epsilon(u_2'')$, we
conclude that
$$
\epsilon(u_1') \, t^{\alpha[u_1']}  \, \epsilon(u_2') \,
t^{\alpha[u_2']} = - \epsilon(u_1'') \, t^{\alpha[u_1'']} \,
\epsilon(u_2'') \,  t^{\alpha[u_2'']}.
$$
Thus the broken trajectories contribute opposite
$\Lambda-$multiples of $x$ to $\delta(\delta y)$ for each
connected component of $\mathcal{M}_1(x,y)$. Hence, summing over
$x,y'$,
$$
\delta(\delta y)=\sum_{(u_1',u_2')\in \mathcal{M}_0(x,y') \times
\mathcal{M}_0(y',y)} \epsilon(u_1') \, t^{\alpha[u_1']}  \,
\epsilon(u_2') \, t^{\alpha[u_2']} \, x=0.\qedhere
$$
\end{proof}
Denote by $HF^*(H;\underline{\Lambda}_{\alpha})$ the
$\Lambda-$modules corresponding to the cohomology groups of the
complex $(CF^*(H;\underline{\Lambda}_{\alpha});\delta)$. We call
these the Novikov-Floer cohomology groups. By filtering the chain
complex by action as in (\ref{SubsectionFloerChainCx}), we can
define
$$
HF^*(H;\underline{\Lambda}_{\alpha};A,B)=H^*(CF^*(H;\underline{\Lambda}_{\alpha};A,B);\delta).
$$
%
%
\subsection{Twisted continuation
maps}\label{SubsectionTwistedContinuationMaps}
%
We now show that the continuation method described in
(\ref{SubsectionContinuationMaps}) can be used in the twisted case
under the same assumptions that we made in the untwisted case.
Recall that this involves solving
$$
\partial_s v + J_s(\partial_t v - X_{H_s}) = 0,
$$
and that under suitable assumptions on $(H_s,J_s)$ the moduli
spaces $\mathcal{M}(x,y)$ of solutions $v$ joining $1$-periodic
orbits $x,y$ of $X_{H_{-}}$ and $X_{H_{+}}$ are smooth manifolds
with compactifications $\overline{\mathcal{M}}(x,y)$ whose
boundaries are given by broken trajectories.

So far, using a twisted differential does not change the setup.
However, to make the continuation map $\phi \co
CF^*(H_{+};\underline{\Lambda}_{\alpha}) \to
CF^*(H_{-};\underline{\Lambda}_{\alpha})$ into a chain map we need
to define it on a generator $y \in \textnormal{Crit}(A_{H_{+}})$
by
$$
\phi(y)= \sum_{v \in \mathcal{M}_0(x,y)} \epsilon(v)\,
t^{\alpha[v]} \, x,
$$
where $\mathcal{M}_0(x,y)$ and $\epsilon(v)\in \{ \pm 1\}$ are as
in (\ref{SubsectionContinuationMaps}).

\begin{proposition}
$\phi \co  CF^*(H_{+};\underline{\Lambda}_{\alpha}) \to
CF^*(H_{-};\underline{\Lambda}_{\alpha})$ is a chain map.
\end{proposition}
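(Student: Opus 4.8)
The plan is to mimic exactly the proof of the previous proposition ($\delta^2=0$), but now for the one-dimensional moduli spaces of the parametrized (continuation) equation. First I would recall that a chain map must satisfy $\phi \circ \delta_{+} = \delta_{-} \circ \phi$, where $\delta_{\pm}$ are the twisted Floer differentials for $H_{\pm}$. To verify this on a generator $y \in \textnormal{Crit}(A_{H_+})$, I would consider, for each $1$-periodic orbit $x$ of $X_{H_-}$, the one-dimensional moduli space $\mathcal{M}_1(x,y)$ of solutions of the parametrized Floer equation $\partial_s v + J_s(\partial_t v - X_{H_s})=0$. Its Gromov--Floer compactification $\overline{\mathcal{M}}_1(x,y)$ is a compact one-manifold with boundary, and the boundary decomposes into two types of broken configurations: a rigid Floer trajectory for $H_{-}$ followed by a rigid continuation trajectory, i.e. a point of $\mathcal{M}_0(x,x') \times \mathcal{M}_0^{\mathrm{cont}}(x',y)$, and a rigid continuation trajectory followed by a rigid Floer trajectory for $H_{+}$, i.e. a point of $\mathcal{M}_0^{\mathrm{cont}}(x,y') \times \mathcal{M}_0(y',y)$. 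Counting the boundary points of each component with signs gives the untwisted identity $\phi\circ\partial_+ = \partial_- \circ \phi$ in the standard way; the task is to upgrade this to the twisted statement.

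The key new point, just as in the proof that $\delta^2=0$, is the behaviour of the local system weights. For a given connected component of $\mathcal{M}_1(x,y)$ that is an arc with two boundary breakings $(v_1', u_2')$ and $(u_1'', v_2'')$ (with $v_i$ the continuation pieces and $u_i$ the honest Floer pieces), the arc itself is a path in $\mathcal{L}\hat{M}$ from $x$ to $y$, and concatenating either broken configuration yields a path homotopic rel endpoints to this arc. Since $\alpha$ is a cocycle, $d\alpha = 0$, so evaluation of $\alpha$ on the two concatenations agrees:
$$
\alpha[v_1'] + \alpha[u_2'] = \alpha[u_1''] + \alpha[v_2''].
$$
Combined with the sign relation $\epsilon(v_1')\epsilon(u_2') = -\epsilon(u_1'')\epsilon(v_2'')$ coming from the orientation of the boundary of the one-manifold, each component of $\mathcal{M}_1(x,y)$ contributes canceling $\Lambda$-multiples of $x$ to $(\phi\circ\delta_+ - \delta_-\circ\phi)(y)$. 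Summing over all components, over all intermediate orbits, and over all $x$, yields $\phi\circ\delta_+ = \delta_-\circ\phi$.

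The only real obstacle is bookkeeping: one must make sure the paths used to evaluate $\alpha$ are set up consistently, i.e. that the reference path assigned to the arc in $\mathcal{M}_1(x,y)$ matches, up to homotopy rel endpoints, the concatenation of the reference paths assigned to its two breakings. This is exactly the point where the cocycle condition enters, and it is the same argument as in the proof of the preceding proposition. Everything else — smoothness and compactness of $\mathcal{M}(x,y)$, the description of $\overline{\mathcal{M}}_1(x,y)$ as a compact one-manifold whose boundary is the two types of broken trajectories, and the coherence of orientations — is exactly as in the untwisted continuation-map construction recalled in (\ref{SubsectionContinuationMaps}), and I would simply cite it. So the proof is essentially a transcription of the $\delta^2 = 0$ argument with $\mathcal{M}_1$ of the parametrized equation in place of $\mathcal{M}_1$ of the $s$-invariant equation.
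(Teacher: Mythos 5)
Your approach is exactly the paper's: compactify the one-dimensional parametrized moduli space, use $d\alpha=0$ on each arc to equate the twisted weights at the two ends, and conclude cancellation. However, your sign bookkeeping has a genuine error. For the mixed component you consider (one breaking of type $(v_1',u_2')$ contributing to $\phi\circ\delta_+$ and one of type $(u_1'',v_2'')$ contributing to $\delta_-\circ\phi$) the correct relation is $\epsilon(v_1')\epsilon(u_2') = +\,\epsilon(u_1'')\epsilon(v_2'')$, not $-\,\epsilon(u_1'')\epsilon(v_2'')$. With your minus sign the two terms would add in $\phi\circ\delta_+ - \delta_-\circ\phi$ rather than cancel. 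The equality (rather than the negation) comes from an asymmetry in the gluing orientations that does not appear in the $\delta^2=0$ argument: for a breaking of type ``Floer for $H_-$ then continuation'' the gluing curve is orientation-preserving iff $\epsilon(u_-')\epsilon(v')=-1$, whereas for a breaking of type ``continuation then Floer for $H_+$'' it is orientation-preserving iff $\epsilon(v')\epsilon(u_+')=+1$. So the continuation argument is not merely a transcription of $\delta^2=0$; the extra sign is precisely what turns the ``opposite ends of an arc'' relation into an equality of the two sides of the chain-map identity.

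There is a second omission: you implicitly assume each component of $\mathcal{M}_1(x,y)$ has one end of each type, but a component may break on the $H_-$ side at both ends, or on the $H_+$ side at both ends. In those cases both broken configurations contribute to the same side of the equation ($\delta_-\circ\phi$ or $\phi\circ\delta_+$ respectively), and there the product of epsilons really does flip sign between the two ends, giving the cancellation within that single sum (this is the $\delta^2=0$-style cancellation). The paper treats all three configurations separately; your proof should as well, since the sign behaviour differs between the mixed case and the same-side cases.
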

\begin{proof}
\begin{figure}
\includegraphics[width=0.3\textwidth]{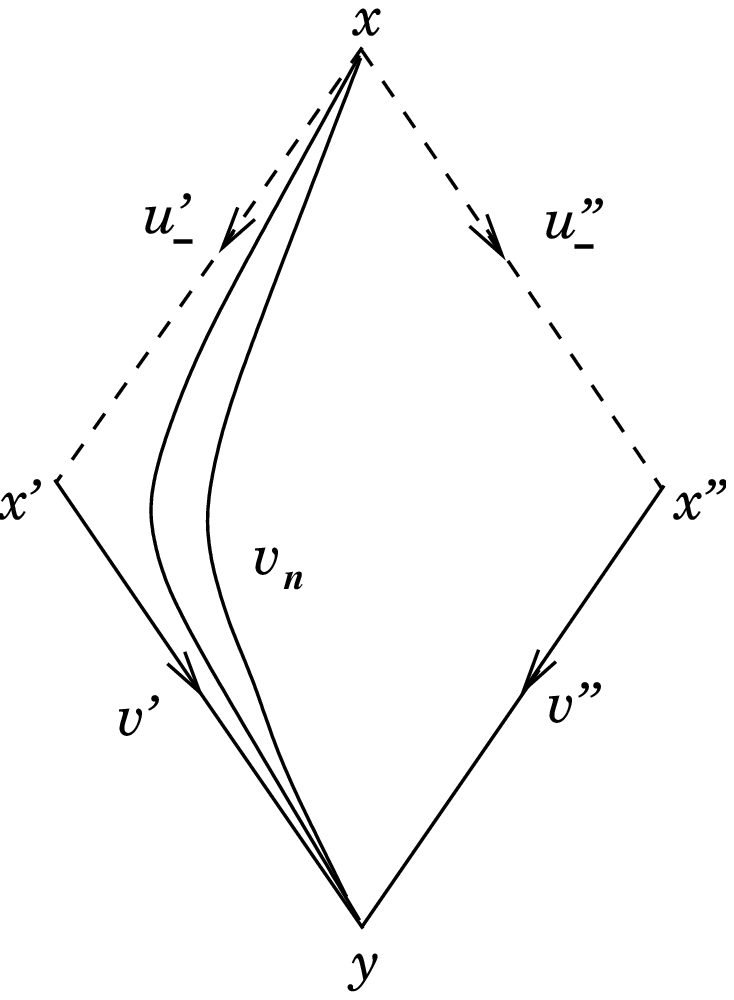}\quad
\includegraphics[width=0.3\textwidth]{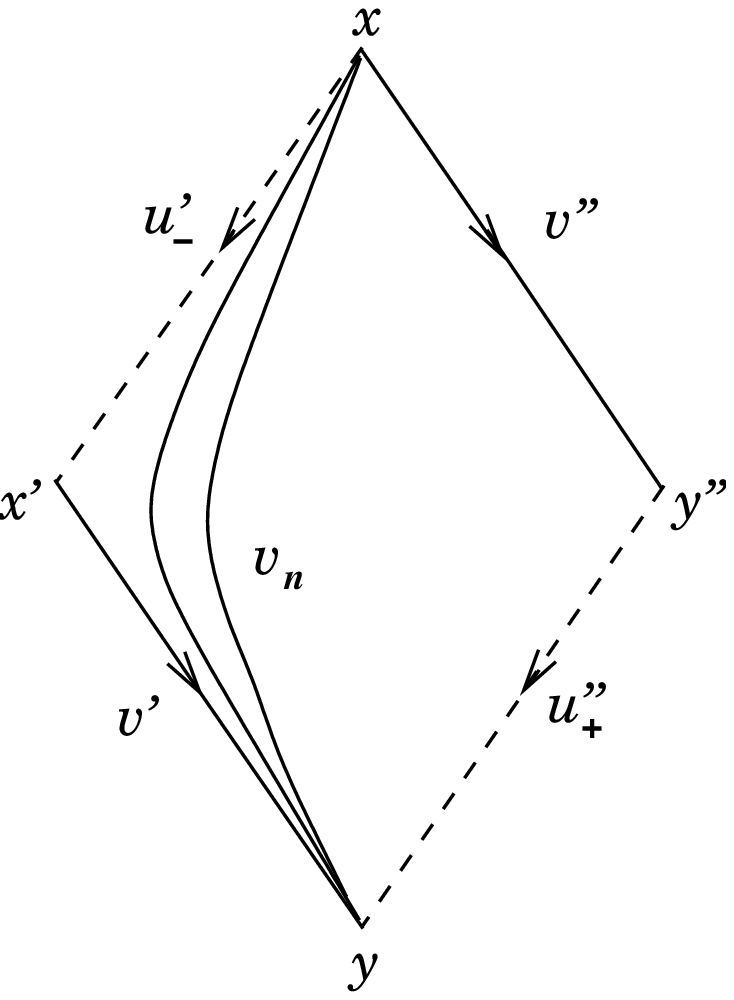}\quad
\includegraphics[width=0.3\textwidth]{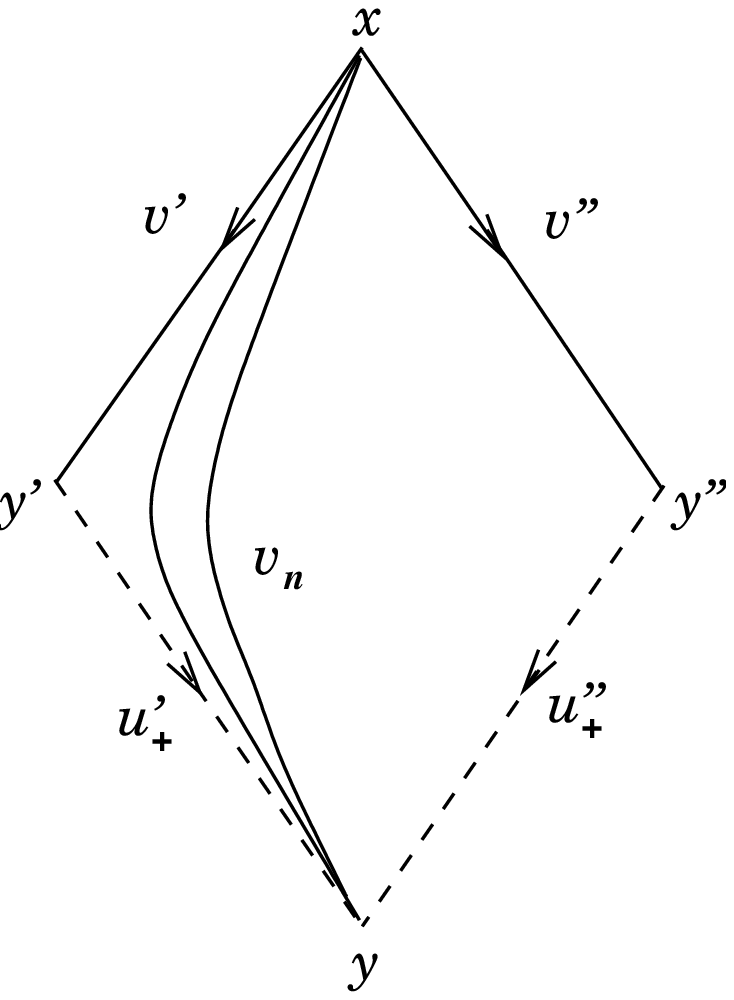}
\caption{The dashed lines $u_{\pm}$ are Floer solutions converging
to $1$-periodic orbits of $X_{H_{\pm}}$, the solid lines are
continuation map solutions, the $v_n \in \mathcal{M}_1(x,y)$ are
converging to broken
trajectories.}\label{FigureNovikovSymplectic2}
\end{figure}

We mimick the proof that $\phi$ is a chain map in the untwisted
case \cite{Salamon}. Denote by $\mathcal{M}^{H_\pm}(\cdot,\cdot)$
the moduli spaces of Floer trajectories for $H_{\pm}$. Observe
Figure \ref{FigureNovikovSymplectic2}.

A compactness result in Floer homology shows that a sequence of
solutions $v_n \in \mathcal{M}_1(x,y)$ will converge to a broken
trajectory
$$
(u'_{-},v')\in \mathcal{M}_0^{H_-}(x,x') \times
\mathcal{M}_0(x',y) \;\textnormal{ or }\; (v',u'_{+})\in
\mathcal{M}_0(x,y') \times \mathcal{M}_0^{H_+}(y',y).
$$
Conversely, given such $(u'_{-},v')$ or $(v',u'_{+})$ there is a
smooth curve $v \co [0,1) \to \mathcal{M}_1(x,y)$, unique up to
reparametrization and up to the choice of $v(0)$, which approaches
the given broken trajectory as $r \to 1$, and the curve is
orientation preserving iff respectively
$\epsilon(u'_{-})\epsilon(v')=-1$ and
$\epsilon(v')\epsilon(u'_{+})=1$.

Thus the boundary of $\overline{\mathcal{M}}_1(x,y)$ is
parametrized by $-\mathcal{M}_0^{H_-}(x,x') \times
\mathcal{M}_0(x',y)$ and by $\mathcal{M}_0(x,y') \times
\mathcal{M}_0^{H_+}(y',y)$. The value of $d\alpha=0$ on a
connected component of $\mathcal{M}_1(x,y)$ as in Figure
\ref{FigureNovikovSymplectic2} is equal to the sum of the values
of $\alpha$ over the broken trajectories. For instance, in the
second figure
$$
\alpha[u_{-}'] + \alpha[v'] = \alpha[v''] + \alpha[u_{+}''],
$$
and since
$\epsilon(u_{-}')\epsilon(v')=\epsilon(v'')\epsilon(u_{+}'')$,
$$
\epsilon(u_{-}') \, t^{\alpha[u_{-}']}  \, \epsilon(v') \,
t^{\alpha[v']} = \epsilon(v'') \, t^{\alpha[v'']} \,
\epsilon(u_{+}'') \,  t^{\alpha[u_{+}'']}.
$$
Thus the broken trajectories contribute equal $\Lambda-$multiples
of $x$ to $\delta(\phi(y))$ and $\phi(\delta y)$ for that
component of $\mathcal{M}_1(x,y)$. A similar computation shows
that in the first or third figures, the two broken trajectories
contribute opposite $\Lambda-$multiples of $x$ and so in total
give no contribution to $\delta(\phi(y))$ or $\phi(\delta y)$. We
deduce that
$$
\begin{array}{l}
\delta(\phi(y)) =  \displaystyle \sum_{(u_{-}',v')\in
\mathcal{M}_0^{H_-}(x,x') \times \mathcal{M}_0(x',y)}
\epsilon(u_{-}') \, t^{\alpha[u_{-}']} \, \epsilon(v') \,
t^{\alpha[v']} \, x \; = \\
\qquad\;\,\quad = \displaystyle \sum_{(v',u_{+}')\in
\mathcal{M}_0(x,y') \times \mathcal{M}_0^{H_+}(y',y)} \epsilon(v')
\, t^{\alpha[v']} \, \epsilon(u_{+}') \,  t^{\alpha[u_{+}']} \, x
\; = \phi(\delta y),
\end{array}
$$
where we sum respectively over $x,x'$ and $x,y'$. Hence $\phi$ is
a chain map.
\end{proof}

A similar argument, by mimicking the proof of the untwisted case,
shows that the twisted continuation maps compose well: given
homotopies from $H_{-}$ to $K$ and from $K$ to $H_{+}$ satisfying
the conditions required in the untwisted case, the composite
$CF^*(H_{+};\underline{\Lambda}_{\alpha}) \to
CF^*(K;\underline{\Lambda}_{\alpha}) \to
CF^*(H_{-};\underline{\Lambda}_{\alpha})$ is chain homotopic to
$\phi$.
%
\subsection{Novikov-symplectic
cohomology}\label{SubsectionNovikovSymplecticCohomology}
If we use the groups $HF^*(H;\underline{\Lambda}_{\alpha})$ from
(\ref{SubsectionNovikovFloerCohomology}) in place of $HF^*(H)$ in
our discussion
(\ref{SubsectionSymplecticCohUsingOneHam}-\ref{SubsectionSymplCohAsDirectLimit})
of the symplectic cohomology groups of a Liouville domain, and we
use the twisted continuation maps constructed in
(\ref{SubsectionTwistedContinuationMaps}), then we obtain the
$\Lambda-$modules
$$
SH^*(M;H_{\infty};\underline{\Lambda}_{\alpha}) \, \textnormal{
and } \, SH^*(M;H_{\infty};\underline{\Lambda}_{\alpha};A,B),
$$
which we call Novikov-symplectic cohomology groups.

So for $H_{\infty}$ such that $H_{\infty}=h(\x)$ for $\x \gg 0$
and $h'(\x) \to \infty$ as $\x \to \infty$, we define
$$
SH^*(M;\underline{\Lambda}_{\alpha}) =
HF^*(H_{\infty};\underline{\Lambda}_{\alpha}).
$$
Alternatively, we may use the Hamiltonians $H$ which equal
$h^m_{c,C}(\x)= m(\x-c) + C$ for $\x \gg 0$, and we take the
direct limit over the twisted continuation maps between the
corresponding twisted Floer cohomologies as the slopes $m>0$
increase,
$$
SH^*(M;\underline{\Lambda}_{\alpha}) = \lim_{\longrightarrow}
HF^*(H;\underline{\Lambda}_{\alpha}).
$$
%
%
%

\section{Abbondandolo-Schwarz isomorphism}\label{SectionAbbondandoloSchwarzIsomorphism}

For a closed (oriented) manifold $N^n$, the symplectic cohomology
of the cotangent disc bundle $M^{2n}=DT^*N$ is isomorphic to the
homology of the free loopspace,
$$
SH^*(DT^*N) \cong H_{n-*}(\mathcal{L} N).
$$
This was first proved by Viterbo \cite{Viterbo2}, and there are
now two alternative approaches by Abbondandolo-Schwarz
\cite{Abbondandolo-Schwarz} and Salamon-Weber
\cite{Salamon-Weber}. We will use the Abbondandolo-Schwarz
isomorphism and show that it carries over to twisted coefficients,
but similar arguments could be carried out using either of the
other approaches. We will recall the construction
\cite{Abbondandolo-Schwarz} of the chain isomorphism
$$
(CM_*(\mathcal{E}),\partial^\mathcal{E}) \to
(CF^{n-*}(H),\partial^H),
$$
between the Morse complex of the Hilbert manifold $\mathcal{L}^1N
= W^{1,2}(S^1,N)$ with respect to a certain Lagrangian action
functional $\mathcal{E}$ and the Floer complex of $T^*N$ with
respect to an appropriate Hamiltonian $H\in C^{\infty}(S^1\times
T^*N,\R)$.

Let $\pi \co  T^*N \to N$ denote the projection. We use the
standard symplectic structure $\omega = d\theta$ and Liouville
field $Z$ on $T^*N$, which in local coordinates $(q,p)$ are
$$
\theta = p\,dq \qquad \omega = dp \wedge dq \qquad Z=p
\,\partial_p.
$$
A metric on $N$ induces metrics and Levi-Civita connections on
$TN$ and $T^*N$, and it defines a splitting $T_{(q,p)}T^*N \cong
T_q N \oplus T_q^*N \cong T_q N \oplus T_q N$ into horizontal and
vertical vectors and a connection $\nabla = \nabla_q \oplus
\nabla_p$, and similarly for $T_{(q,v)}TN$. For this splitting our
preferred $\omega-$compatible almost complex structure is $J
\partial_q =-\partial_p$.

\begin{remark*}
Our action $A_H$ is opposite to the action $\mathcal{A}$ used in
\cite{Abbondandolo-Schwarz}, so our Floer trajectory $u(s,t)$
corresponds to $u(-s,t)$ in \cite{Abbondandolo-Schwarz}. Our
grading is $\mu(x)=n-\mu_{CZ}(x)$ (see \cite{Salamon}, where the
sign of $H$ is opposite to ours), the one used in
\cite{Abbondandolo-Schwarz} is $\mu_{CZ}(x)$ and that in
\cite{Seidel} is $-\mu_{CZ}(x)$. In our convention the index
$\mu(x)$ agrees with the Morse index $\textnormal{ind}_{H}(x)$ for
$x\in \textnormal{Crit}(H)$ when $H$ is a $C^2$-small Morse
Hamiltonian.
\end{remark*}
%
\subsection{The Lagrangian Morse
functional}\label{SubsectionLagrangianMorseFunctional}
The Morse function one considers on $\mathcal{L}^1 N=
W^{1,2}(S^1,N)$ is the Lagrangian action functional
$$
\mathcal{E}(q) = \int_0^1 L(t,q(t),\dot{q}(t)) \, dt,
$$
where the Lagrangian $L \in C^{\infty}(S^1 \times TN, \R)$ is
generic and satisfies certain growth conditions and a strong
convexity assumption that ensure that: $\mathcal{E}$ is bounded
below; the critical points of $\mathcal{E}$ are non-degenerate
with finite Morse index; and $\mathcal{E}$ satisfies the
Palais-Smale condition (any sequence of $q_n \in \mathcal{L}^1N$
with bounded actions $\mathcal{E}(q_n)$ and with energies $\|
\nabla \mathcal{E}(q_n) \|_{W^{1,2}}\to 0$ has a convergent
subsequence). By an appropriate generic perturbation it is
possible to obtain a metric $G$ which is uniformly equivalent to
the $W^{1,2}$ metric on $\mathcal{L}^1 N$ and for which
$(\mathcal{E},G)$ is a Morse-Smale pair. Denote by
$\mathcal{M}^{\mathcal{E}}(q_-,q_+)=\mathcal{M}_{\mathcal{E}}'(q_-,q_+)/\R$
the unparametrized trajectories, where
$$
\mathcal{M}_{\mathcal{E}}'(q_-,q_+) = \{ v \co \R \to
\mathcal{L}^1N:
\partial_s v(s)=-\nabla \mathcal{E}(v(s)),\; \lim_{s\to \pm
\infty} v(s) = q_{\pm} \}.
$$
Under these assumptions, infinite dimensional Morse theory can be
applied to $(\mathcal{L}^1 N,\mathcal{E},G)$ and the Morse
homology is isomorphic to the singular homology of $\mathcal{L}^1
N$ (which is isomorphic to the singular homology of $\mathcal{L}
N$, since $\mathcal{L}^1 N$ and $\mathcal{L} N$ are homotopy
equivalent). This isomorphism respects the filtration by action:
the homology of the Morse complex generated by the $x\in
\textnormal{Crit}(\mathcal{E})$ with $\mathcal{E}(x)<a$ is
isomorphic to $H_*(\{ q\in \mathcal{L}^1N: \mathcal{E}(q)<a \})$.
The isomorphism also respects the splitting of the Morse complex
and the singular complex into subcomplexes corresponding to the
components of $\mathcal{L}^1N$ (which are indexed by the conjugacy
classes of $\pi_1(N)$).
%
\subsection{Legendre transform}\label{SubsectionLegendreTransform}
$L$ defines a Hamiltonian $H \in C^{\infty}(S^1 \times T^*N,\R)$
by
$$
H(t,q,p) = \max_{v\in T_q N} (p\cdot v - L(t,q,v)).
$$
The strong convexity assumption on $L$ ensures that there is a
unique maximum precisely where $p=d_vL(t,q,v)$ is the differential
of $L$ restricted to the vertical subspace
$T^{\textnormal{vert}}_{(q,v)}TN \cong T_q N$, and it ensures that
the Legendre transform
$$
\mathfrak{L} \co S^1\times TN \to S^1\times T^*N,\, (t,q,v)
\mapsto (t,q,d_v L(t,q,v))
$$
is a fiber-preserving diffeomorphism.

Pull back $(\omega,H,X_H)$ via $\mathfrak{L}$ to obtain
$(\mathfrak{L^*}\omega, H\circ \mathfrak{L}, Y_L)$, so
$\mathfrak{L^*}\omega(Y_L,\cdot)=-d(H\circ \mathfrak{L})$. The
critical points of $\mathcal{E}$ are precisely the $1$-periodic
orbits $(q,\dot{q})$ of $Y_L$ in $TN$, and these bijectively
correspond to $1$-periodic orbits $x$ of $X_H$ in $T^*N$ via
$$
(t,x)=\mathfrak{L}(t,q,\dot{q}).
$$
Under this correspondence the Morse index of $q$ is
$m(q)=n-\mu(x)$ (in the conventions of
\cite{Abbondandolo-Schwarz}, $m(q)=\mu_{CZ}(x)$). Moreover, for
any $W^{1,2}-$path $x \co [0,1]\to T^*N$,
$$
\mathcal{E}(\pi x) \geq -A_H(x),
$$
which becomes an equality iff $(t,x)=\mathfrak{L}(t,\pi
x,\partial_t (\pi x))$ for all $t$.
%
\subsection{The moduli spaces $\mathbf{\mathcal{M}^+(q,x)}$}
For $1$-periodic orbits $q$ of $Y_L$ and $x$ of $X_H$, define
$\mathcal{M}^{+}(q,x)$ to be the collection of all maps $u \in
C^{\infty}((-\infty,0) \times S^1, T^*N)$ which are of class
$W^{1,3}$ on $(-1,0)\times S^1$ and which solve Floer's equation
$$
\partial_s u + J(t,u)(\partial_t u - X_H(t,u)) = 0,
$$
with the following boundary conditions:

 \indent i) as $s\to
-\infty$, $u(s,\cdot)\to x$ uniformly in $t$;

\indent ii) as $s\to 0$, $u$ will converge to some loop
$u(0,\cdot)$ of class $W^{2/3,3}$, and we require that the
projection $\overline{q}(t)=\pi\circ u(0,t)$ in $N$ flows backward
to $q\in \textnormal{Crit}(\mathcal{E})$ along the negative
gradient flow $\phi_{-\nabla \mathcal{E}}^s$ of $\mathcal{E}$: $
\phi_{-\nabla \mathcal{E}}^s (\overline{q}) \to q \textnormal{ as
} s\to -\infty$.

Loosely speaking, $\mathcal{M}^+(q,x)$ consists of pairs of
trajectories $(w,u_+)$ where $w$ is a $-\nabla \mathcal{E}$
trajectory in $N$ flowing out of $q$, and $u_+$ is a Floer
solution in $T^*N$ flowing out of $x$, such that $w$ and $\pi u_+$
intersect in a loop $\overline{q}(t)=\pi u_+(0,t)$ in $N$.
%
\subsection{Transversality and compactness}
The assumption on $H$ and $L$ is that there are constants $c_i>0$
such that for all $(t,q,p) \in S^1 \times T^*N$, $(t,q,v) \in S^1
\times TN$,
$$
\begin{array}{lll}
dH(p\partial_p) - H \geq c_0 |p|^2 - c_1, & |\nabla_p H| \leq
c_2(1+|p|), & |\nabla_q H| \leq
c_2(1+|p|^2); \\
\nabla_{vv} L \geq c_3\, \textnormal{Id}, \;\, |\nabla_{vv} L|
\leq c_4, & |\nabla_{qv} L| \leq c_4 (1+|v|), & |\nabla_{qq} L|
\leq c_4 (1+|v|^2).
\end{array}
$$
We also assume that a small generic perturbation of $L$ (and hence
$H$) are made so that the nondegeneracy condition (see
\ref{SubsectionTransversalityCompactness}) holds for $1$-periodic
orbits of $Y_L$ and $X_H$. We call such $H,L$ regular. For regular
$H$, there are only finitely many $1$-periodic orbits $x$ of $X_H$
with action $A_H(x) \geq a$, for $a\in \R$. After a small generic
perturbation of $J$, the compactness and transversality results of
(\ref{SubsectionTransversalityCompactness}) hold for the spaces
$\mathcal{M}^{H}(x,y)=\mathcal{M}'(x,y)/\R$ of unparametrized
Floer solutions in $T^*N$ converging to $x,y\in
\textnormal{Crit}(A_H)$ at the ends, and similar results hold for
$\mathcal{M}^{+}(q,x)$ by using the $W^{1,3}$ condition in the
definition to generalize the proofs used for $\mathcal{M}'(x,y)$.

When all of the above assumptions are satisfied, we call
$(L,G,H,J)$ regular. In this case,
$\mathcal{M}^{\mathcal{E}}(p,q)$, $\mathcal{M}^{H}(x,y)$ and
$\mathcal{M}^{+}(q,x)$ are smooth manifolds with compactifications
by broken trajectories, and their dimensions are:
$$
\begin{array}{l} \textnormal{dim }\mathcal{M}^{\mathcal{E}}(p,q) =
m(p)-m(q)-1
\\ \textnormal{dim } \mathcal{M}^{H}(x,y) = \mu(x) -
\mu(y)-1
\\ \textnormal{dim }\mathcal{M}^{+}(q,x) = m(q)+\mu(x)-n
\end{array}
$$
and we denote by $\mathcal{M}_k^{\mathcal{E}}(p,q)$,
$\mathcal{M}_k^{H}(x,y)$ and $\mathcal{M}_k^{+}(q,x)$ the
$k$-dimensional ones.

\begin{theorem*}[Abbondandolo-Schwarz
{\cite{Abbondandolo-Schwarz}}] If $(L,G,H,J)$ is regular then
there is a chain-complex isomorphism $ \varphi \co
(CM_*(\mathcal{E}),\partial^\mathcal{E}) \to
(CF^{n-*}(H),\partial^H)$, which on a generator $q \in
\textnormal{Crit}(\mathcal{E})$ is defined as
$$
\varphi(q) = \sum_{u_+ \in \mathcal{M}_0^{+}(q,x)} \epsilon(u_+)
\, x,
$$
where $\epsilon(u_+)\in \{\pm 1\}$ are orientation signs. The
isomorphism is compatible with the splitting into subcomplexes
corresponding to different conjugacy classes of $\pi_1(N)$, and it
is compatible with the action filtrations: for any $a\in\R$ it
induces an isomorphism on the subcomplexes generated by the $q,x$
with $\mathcal{E}(q)<a$ and $-A_H(x)<a$.
\end{theorem*}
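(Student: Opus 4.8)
The plan is to construct the map $\varphi$ by counting the mixed moduli spaces $\mathcal{M}_0^+(q,x)$ exactly as in the displayed formula, and then to verify in turn: (1) that $\varphi$ is well-defined, i.e. the zero-dimensional moduli spaces $\mathcal{M}_0^+(q,x)$ are finite and the sum lies in $CF^{n-*}(H)$; (2) that $\varphi$ is a chain map; and (3) that $\varphi$ is an isomorphism of complexes. For (1), finiteness follows from the compactness results for $\mathcal{M}^+(q,x)$ together with the fact that for regular $H$ only finitely many orbits $x$ have action above any given bound, combined with the a priori inequality $\mathcal{E}(\pi x)\geq -A_H(x)$: a Floer half-cylinder $u_+\in\mathcal{M}^+(q,x)$ together with the gradient trajectory $w$ out of $q$ provides an energy estimate bounding $-A_H(x)$ above by $\mathcal{E}(q)$, so only finitely many $x$ can occur. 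The index formula $\dim\mathcal{M}^+(q,x)=m(q)+\mu(x)-n$ shows that the count is nonzero only when $\mu(x)=n-m(q)$, which gives the degree shift $CM_* \to CF^{n-*}$; compatibility with the splitting by conjugacy classes of $\pi_1(N)$ is immediate since all trajectories lie in a fixed component of $\mathcal{L}N$, and compatibility with the action filtration follows from the same energy estimate, which forces $-A_H(x)<a$ whenever $\mathcal{E}(q)<a$.

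For (2), the chain-map identity $\varphi\circ\partial^\mathcal{E}=\partial^H\circ\varphi$ is proved by the usual one-dimensional cobordism argument: one studies the compactification $\overline{\mathcal{M}}_1^+(q,x)$ of the one-dimensional mixed moduli space. Its boundary decomposes, by a gluing/breaking analysis, into two types of configurations: a broken Morse trajectory from $q$ to some $q'$ followed by an element of $\mathcal{M}_0^+(q',x)$, contributing $\varphi(\partial^\mathcal{E}q)$; and an element of $\mathcal{M}_0^+(q,y)$ followed by a Floer trajectory from $y$ to $x$, contributing $\partial^H(\varphi(q))$. Since the signed count of the boundary of a compact one-manifold is zero, with the orientation conventions these two contributions agree. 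The breaking at the interior "interface" loop $\overline{q}(t)=\pi u_+(0,t)$ needs the transversality statements for $\mathcal{M}^+$, which are granted by the regularity hypothesis on $(L,G,H,J)$ and the $W^{1,3}$ technical setup in the definition of $\mathcal{M}^+(q,x)$.

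For (3), one filters both complexes by action and runs an induction over the (finitely many in each window, by regularity) critical values. The key algebraic input is that $\varphi$ is filtered and that on each "short" filtration quotient — spanned by orbits $x$ and critical points $q$ in a narrow action window — the matrix of $\varphi$ is triangular with $\pm 1$ on the diagonal: the diagonal entry is the count of $\mathcal{M}_0^+(q,x)$ for the unique pair with $\mathcal{E}(q)=-A_H(x)$, and the equality case of $\mathcal{E}(\pi x)\geq -A_H(x)$ (attained exactly on Legendre-lift loops) forces $\mathcal{M}_0^+(q,x)$ in that borderline case to consist of a single, stationary half-cylinder. Hence $\varphi$ is an isomorphism on each filtration quotient, and since the filtrations are exhaustive and bounded below on each action window, a five-lemma/direct-limit argument shows $\varphi$ is a chain isomorphism.

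The main obstacle I expect is step (2) — specifically the gluing theorem at the interface loop that identifies the ends of $\overline{\mathcal{M}}_1^+(q,x)$ with the two stated product configurations, including the delicate matching of orientations so that the two boundary contributions cancel rather than add. The analysis of the mixed object $\mathcal{M}^+(q,x)$ (half-cylinder in $T^*N$ glued along its boundary circle to a gradient flowline of $\mathcal{E}$ in $N$) is not quite standard Floer theory, and setting up the correct function spaces and Fredholm theory (the $W^{1,3}$ regularity, the $W^{2/3,3}$ boundary loop, the compatibility of the two gradient flows) is where the real work lies; everything else is a routine transcription of the closed-manifold arguments. Fortunately, this analysis is precisely what is carried out in \cite{Abbondandolo-Schwarz}, so here I would quote it.
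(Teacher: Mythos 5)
Your proposal follows the same two-pronged strategy as the paper: the chain-map identity comes from the standard cobordism argument on the compactified one-dimensional hybrid moduli space $\overline{\mathcal{M}}_1^+(q,x)$, whose boundary consists precisely of Morse-breaking at $q$ (contributing $\varphi(\partial^{\mathcal{E}}q)$) and Floer-breaking at $x$ (contributing $\partial^H\varphi(q)$); and the isomorphism comes from the energy estimate $\mathcal{E}(q)\ge\mathcal{E}(\bar q)\ge -A_H(u_+(0,\cdot))\ge -A_H(x)$ with equality exactly on Legendre-lift pairs, which makes $\varphi$ upper triangular with $\pm1$ on the diagonal after ordering generators by action. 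Your slight reformulation of the triangularity step via filtration quotients and a five-lemma/direct-limit argument is just a more careful way of handling the possibly infinite matrix that the paper waves at, and your step (1) on finiteness and the degree shift $\mu(x)=n-m(q)$ is implicit in the paper's regularity setup; everything else, including the decision to cite Abbondandolo--Schwarz for the hard gluing and Fredholm analysis at the interface loop, matches the paper.
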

%
\subsection{Proof that $\varphi$ is an isomorphism.}
Since actions decrease along orbits and $\mathcal{E}(\pi x) \geq
-A_H(x)$ with equality iff $(t,x)=\mathfrak{L}(t,\pi \circ
x,\partial_t \pi x)$, we deduce that
$$
\mathcal{E}(q) \geq \mathcal{E}(\overline{q}) \geq
-A_H(u_+(0,\cdot)) \geq -A_H(x),
$$
so $\mathcal{E}(q) \geq -A_H(x)$ with equality iff $q \equiv
\overline{q}$, $u_+ \equiv x$, $q = \pi x$ and
$(t,x)=\mathfrak{L}(t,q,\dot{q})$. Therefore if
$\mathcal{E}(q)<-A_H(x)$ then $\mathcal{M}^{+}(q,x)=\emptyset$,
and if $\mathcal{E}(q)=-A_H(x)$ then $\mathcal{M}^{+}(q,x)$ is
either empty or, when $(t,x)=\mathfrak{L}(t,q,\dot{q})$, it
consists of $u_+ \equiv x$. Now order the generators of
$CM_*(\mathcal{E})$ according to increasing action and those of
$CF^*(H)$ according to decreasing action, and so that the order is
compatible with the correspondence
$(t,x)=\mathfrak{L}(t,q,\dot{q})$. Then $\varphi$ is a (possibly
infinite) upper triangular matrix with $\pm 1$ along the diagonal,
so $\varphi$ is an isomorphism.
%
\subsection{Proof that $\varphi$ is a chain map}
The differentials for the complexes
$(CM_*(\mathcal{E}),\partial^\mathcal{E})$ and
$(CF^*(H),\partial^H)$  are defined on generators $q\in
\textnormal{Crit}(\mathcal{E})$, $y \in \textnormal{Crit}(A_H)$ by
$$
\partial^{\mathcal{E}} (q) = \sum_{v \in \mathcal{M}_0^{\mathcal{E}}(q,p)} \epsilon(v)\,
p \quad \quad \quad \quad
\partial^{H} (y) = \sum_{u \in \mathcal{M}_0^{H}(x,y)} \epsilon(u)\, x
$$
where $\epsilon(v),\epsilon(u)\in \{\pm 1\}$ depend on
orientations.
\begin{figure}
\includegraphics[width=0.3\textwidth]{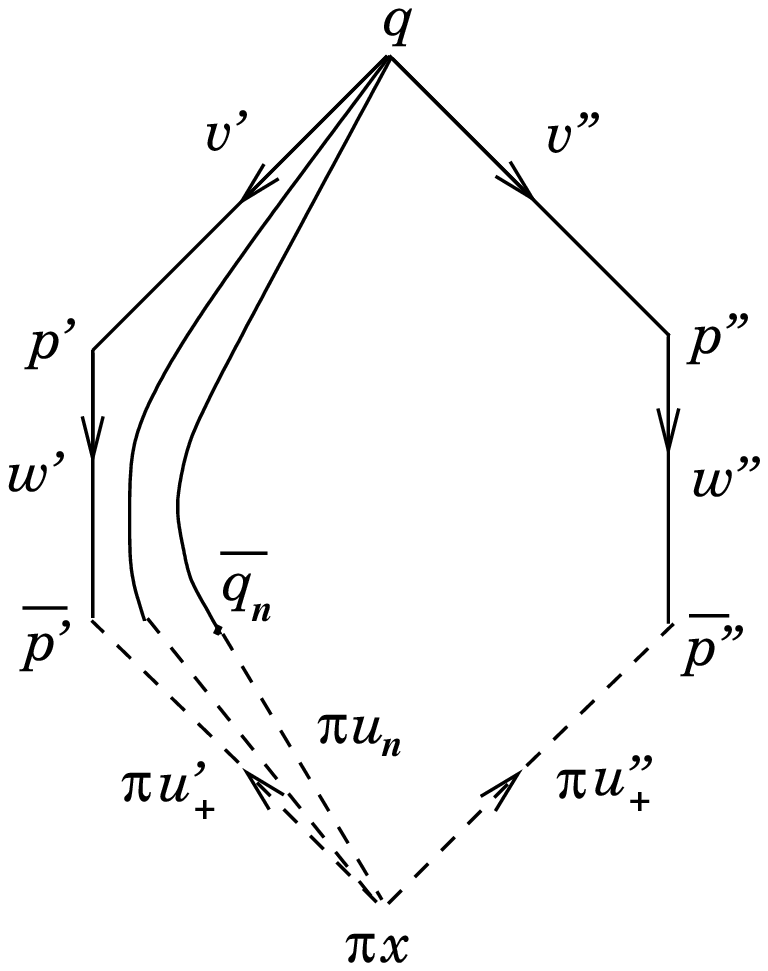}\quad
\includegraphics[width=0.3\textwidth]{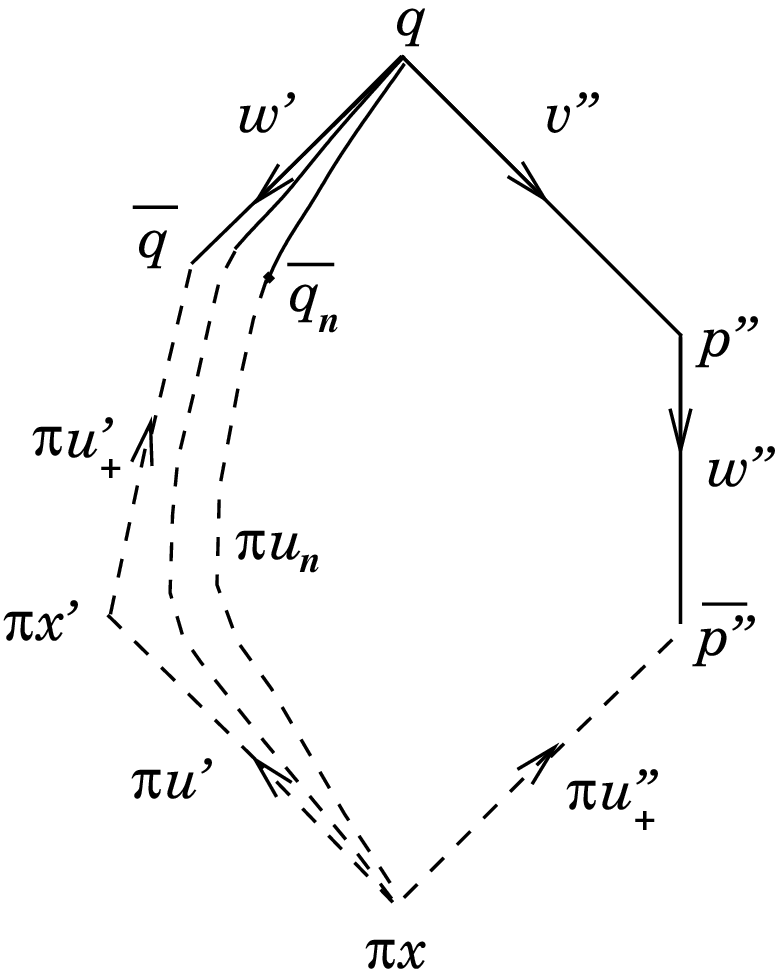}\quad
\includegraphics[width=0.3\textwidth]{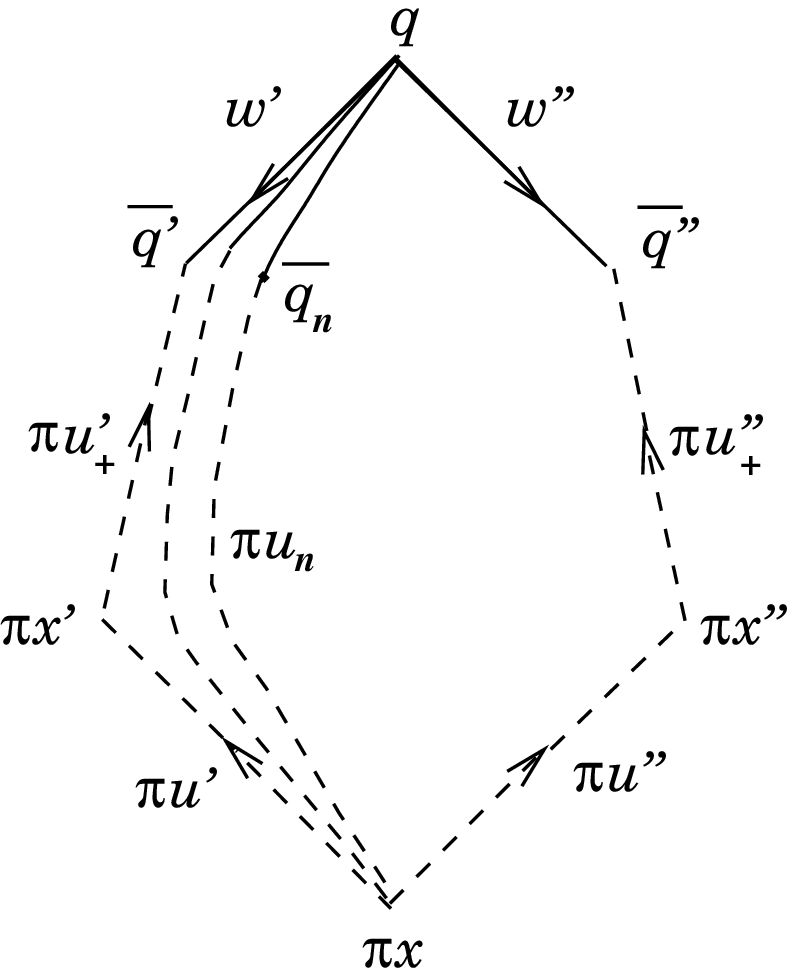}
\caption{Solid lines are $-\nabla \mathcal{E}$ trajectories in
$N$, dotted lines are the projections under $\pi \co T^*N \to N$
of Floer solutions.}\label{FigureAbbondandoloSchwarz}
\end{figure}
Observe Figure \ref{FigureAbbondandoloSchwarz}. A compactness
argument shows that the broken trajectories that compactify
$\mathcal{M}_1^{+}(q,x)$ are of two types: either (\emph{i}) the
$-\nabla\mathcal{E}$ trajectory breaks, or (\emph{ii}) the Floer
trajectory breaks. More precisely, if $u_n \in
\mathcal{M}_1^{+}(q,x)$ and $\overline{q}_n(t)=\pi (u_n(0,t))$,
then either
\\
\indent (\emph{i}) there are $[v] \in
\mathcal{M}_0^{\mathcal{E}}(q,p)$; $u'_+ \in
\mathcal{M}_0^{+}(p,x)$; and reals $t_n\to -\infty$ with
$$
\phi_{-\nabla\mathcal{E}}^{t_n}(\overline{q}_n) \to v(0)
\textnormal{ in } W^{1,2}, \textnormal{ and } u_n \to u'_+
\textnormal{ in } C^{\infty}_{\textnormal{loc}};
$$
\indent (\emph{ii}) or there are $[u'] \in \mathcal{M}_0^H(x,x')$;
$u_+' \in \mathcal{M}_0^{+}(q,x')$; and reals $s_n\to -\infty$
with
$$
u_n(s_n+\cdot,\cdot) \to u' \textnormal{ and } u_n \to u_+'
\textnormal{ both in } C^{\infty}_{\textnormal{loc}}.
$$
Conversely, given $(v,u'_+)$ or $(u',u_+')$ as above, there is a
smooth curve $u \co [0,1) \to \mathcal{M}_1^+(q,x)$, unique up to
reparametrization and up to the choice of $u(0)$, which approaches
the given broken trajectory as $r \to 1$, and the curve is
orientation preserving iff respectively
$\epsilon(v)\epsilon(u'_+)=1$ and $\epsilon(u')\epsilon(u_+')=-1$.

Thus the boundary of $\mathcal{M}_1^{+}(q,x)$ is parametrized by
$\mathcal{M}_0^{\mathcal{E}}(q,p) \times \mathcal{M}_0^{+}(p,x)$
and by $-\mathcal{M}_0^H(x,x') \times \mathcal{M}_0^{+}(q,x')$.
Figure \ref{FigureAbbondandoloSchwarz} shows the possible
components of $\mathcal{M}_1^{+}(q,x)$: in the first and third
figures, the broken trajectories contribute zero respectively to
$\varphi(\partial^{\mathcal{E}}(q))$ and
$\partial^{H}(\varphi(q))$; in the second figure we see that
$\epsilon(u')\epsilon(u_+')=\epsilon(v'')\epsilon(u''_+)$, so the
broken trajectories contribute $\pm x$ to both
$\partial^{H}(\varphi(q))$ and
$\varphi(\partial^{\mathcal{E}}(q))$. Therefore
$\partial^{H}(\varphi(q))=\varphi(\partial^{\mathcal{E}}(q))$, so
$\varphi$ is a chain map.
%
\subsection{The twisted version of the Abbondandolo-Schwarz
isomorphism}
Let $\alpha$ be a singular cocyle representing a class in
$H^1(\mathcal{L}^1 N) \cong H^1(\mathcal{L} N)$. We will use the
bundles $\underline{\Lambda}_{\alpha}$ on $\mathcal{L}^1 N$ and
$\underline{\Lambda}_{(\mathcal{L}\pi)^*\alpha}$ on $\mathcal{L}^1
T^*N$ (see \ref{SubsectionNovikovbundles}), where $\mathcal{L}\pi
\co \mathcal{L}^1 T^*N \to \mathcal{L}^1 N$ is induced by $\pi \co
T^*N \to N$. The twisted complexes
$(CM_*(\mathcal{E};\underline{\Lambda}_{\alpha}),\delta^\mathcal{E})$
and
$(CF^*(H;\underline{\Lambda}_{(\mathcal{L}\pi)^*\alpha}),\delta^H)$
are freely generated over $\Lambda$ respectively by the $q\in
\textnormal{Crit}(\mathcal{E})$ and the $y\in
\textnormal{Crit}(A_H)$, and the twisted differentials are defined
by
$$
\delta^{\mathcal{E}} (q) = \sum_{v \in
\mathcal{M}_0^{\mathcal{E}}(q,p)} \epsilon(v)\, t^{-\alpha[v]} \,
p \quad \quad \quad \quad \delta^{H} (y) = \sum_{u \in
\mathcal{M}_0^{H}(x,y)} \epsilon(u)\,
t^{\alpha[\mathcal{L}\pi(u)]} \, x
$$
since $\alpha[\mathcal{L}\pi (u)] = (\mathcal{L}\pi)^*\alpha[u]$.
The sign difference in the powers of $t$ arises because
$\delta^{\mathcal{E}}$ is a differential and $\delta^{H}$ is a
codifferential. For simplicity, we write $\pi u = \mathcal{L}\pi
(u)$.

\begin{theorem}\label{TheoremAbbondandoloSchwarzTwisted}
If $(L,G,H,J)$ is regular then for all $\alpha \in
H^1(\mathcal{L}N)$ there is a chain-complex isomorphism $\varphi
\co
(CM_*(\mathcal{E};\underline{\Lambda}_{\alpha}),\delta^\mathcal{E})
\to
(CF^{n-*}(H;\underline{\Lambda}_{(\mathcal{L}\pi)^*\alpha}),\delta^H),
$ which on a generator $q$ is defined as
$$
\varphi(q) = \sum_{u_+ \in \mathcal{M}_0^{+}(q,x)} \epsilon(u_+)
\, t^{-\alpha[w] + \alpha[\pi u_+]} \, x,
$$
where $w \co  (-\infty,0] \to \mathcal{L}^1 N$ is the negative
gradient trajectory $w(s)=\phi_{-\nabla
\mathcal{E}}^s(\overline{q})$ connecting $q$ to
$\overline{q}(\cdot)=\pi u_+(0,\cdot)$. The isomorphism is
compatible with the splitting into subcomplexes corresponding to
different conjugacy classes of $\pi_1(N)$, and it is compatible
with the action filtrations: for any $a\in\R$ it induces an
isomorphism on the subcomplexes generated by the $q,x$ with
$\mathcal{E}(q)<a$ and $-A_H(x)<a$.

After identifying Morse cohomology with singular cohomology, the
map $\varphi$ induces an isomorphism
$$
SH^*(DT^*N;\underline{\Lambda}_{\alpha}) \cong H_{n-*}(\mathcal{L}
N;\underline{\Lambda}_{\alpha}).
$$
\end{theorem}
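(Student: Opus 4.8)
The plan is to run the untwisted Abbondandolo–Schwarz argument essentially verbatim, inserting the appropriate powers of $t$ at each stage and checking that they are consistent. First I would verify that $\varphi$ is well-defined over $\Lambda$: the coefficient $t^{-\alpha[w]+\alpha[\pi u_+]}$ attached to each $u_+\in\mathcal{M}_0^+(q,x)$ is exactly the composite of the local-system isomorphisms $\Lambda_q\to\Lambda_{\overline q}$ (along the gradient path $w$, which is a $1$-chain in $\mathcal{L}^1 N$, whence the sign $-\alpha[w]$ since $\delta^{\mathcal{E}}$ is a differential running with the flow) and $\Lambda_{\overline q}\to\Lambda_x$ (along the path $s\mapsto\pi u_+(s,\cdot)$ in $\mathcal{L}^1 N$, giving $+\alpha[\pi u_+]$). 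As in the proof of the previous propositions, for each fixed degree there are only finitely many generators on either side with action above a given bound, so the action-filtration compatibility (already recorded for $\varphi$ in the untwisted theorem, since actions only decrease along all relevant trajectories) guarantees that $\varphi(q)$ is a finite $\Lambda$-linear combination modulo any action window; hence $\varphi$ is a genuine map of $\Lambda$-modules.

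Next I would check $\varphi$ is a chain map, i.e.\ $\delta^H\circ\varphi=\varphi\circ\delta^{\mathcal{E}}$. This copies the untwisted argument using the two-type boundary description of $\overline{\mathcal{M}}_1^+(q,x)$, namely $\mathcal{M}_0^{\mathcal{E}}(q,p)\times\mathcal{M}_0^+(p,x)$ and $-\mathcal{M}_0^H(x,x')\times\mathcal{M}_0^+(q,x')$. On each connected component of $\mathcal{M}_1^+(q,x)$ the relation $d\alpha=0$ forces the total $\alpha$-value along the boundary to match; concretely, a broken configuration of type (\emph{i}) has $\alpha$-weight $-\alpha[v]-\alpha[w']+\alpha[\pi u'_+]$, a type (\emph{ii}) configuration has weight $-\alpha[w'']+\alpha[\pi u''_+]+\alpha[\pi u']$, and the gluing curve witnesses that these are equal as exponents of $t$. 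Combined with the untwisted sign identity $\epsilon(v)\epsilon(u'_+)=\epsilon(u')\epsilon(u_+')\cdot(\pm1)$ already established, the contributions to $\delta^H(\varphi(q))$ and $\varphi(\delta^{\mathcal{E}}(q))$ from each component agree, and the two "bad" pictures (first and third figures) cancel in pairs exactly as before. This gives the chain-complex isomorphism statement, and since the action and the $\pi_1(N)$-conjugacy-class splitting are untouched by the twisting, those compatibilities are inherited from the untwisted case.

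Finally, to pass from Morse/Floer complexes to the claimed isomorphism $SH^*(DT^*N;\underline{\Lambda}_{\alpha})\cong H_{n-*}(\mathcal{L}N;\underline{\Lambda}_{\alpha})$, I would first identify $H^*(CM_*(\mathcal{E};\underline{\Lambda}_{\alpha}),\delta^{\mathcal{E}})$ with twisted singular homology $H_{n-*}(\mathcal{L}^1 N;\underline{\Lambda}_{\alpha})\cong H_{n-*}(\mathcal{L}N;\underline{\Lambda}_{\alpha})$ — this is infinite-dimensional Morse homology with a local system, which works because $(\mathcal{E},G)$ is Morse–Smale, $\mathcal{E}$ is bounded below and Palais–Smale, and the Novikov bundle contributes only the standard parallel-transport twisting along gradient trajectories. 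On the Floer side, the twisted continuation maps of \ref{SubsectionTwistedContinuationMaps} let one realize $SH^*(DT^*N;\underline{\Lambda}_{(\mathcal{L}\pi)^*\alpha})$ as $HF^*(H;\underline{\Lambda}_{(\mathcal{L}\pi)^*\alpha})$ for a Hamiltonian $H$ quadratic (hence effectively linear-at-infinity after the truncation trick of \ref{SubsectionSymplecticCohWithActionBounds}) as in Abbondandolo–Schwarz, and one must observe that $(\mathcal{L}\pi)^*\alpha$ is identified with $\alpha$ via the homotopy equivalence $\mathcal{L}^1 T^*N\simeq\mathcal{L}^1 N$ (the fibers of $T^*N\to N$ are contractible), so $\underline{\Lambda}_{(\mathcal{L}\pi)^*\alpha}$-coefficients on the Floer side are the same data as $\underline{\Lambda}_{\alpha}$-coefficients. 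I expect the main obstacle to be not any single estimate — all the hard analysis (transversality, compactness, the dimension formulas, the broken-trajectory descriptions) is quoted from \cite{Abbondandolo-Schwarz} — but rather the bookkeeping of signs and $t$-exponents together: one must be scrupulous that the path used to define the weight of $u_+$ concatenates correctly with the paths defining the weights of the breaking pieces, and that the convention "$-\alpha[v]$ for $\delta^{\mathcal{E}}$, $+\alpha[\pi u]$ for $\delta^H$" is applied consistently so that the cocycle condition $d\alpha=0$ really does produce the cancellation. Once that ledger is balanced, the rest is the untwisted proof with $\Lambda$-coefficients.
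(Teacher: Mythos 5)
Your proposal follows the paper's approach closely: twist the untwisted Abbondandolo--Schwarz construction by inserting the appropriate powers of $t$, verify the chain-map property by evaluating $d\alpha=0$ on the components of $\mathcal{M}_1^+(q,x)$ and comparing the $\alpha$-weights of the type-(\emph{i}) and type-(\emph{ii}) broken limits, then upgrade to the homology-level isomorphism by identifying twisted Morse homology with singular homology with local coefficients and passing to the twisted direct limit on the Floer side. The bookkeeping of exponents you perform (type-(\emph{i}) weight $-\alpha[v]-\alpha[w']+\alpha[\pi u'_+]$ versus type-(\emph{ii}) weight $-\alpha[w'']+\alpha[\pi u''_+]+\alpha[\pi u']$) matches the paper's computation, up to a relabelling of primes.

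There is, however, a genuine gap: nowhere do you actually prove that the twisted $\varphi$ is a \emph{bijection}. After the chain-map check you write ``this gives the chain-complex isomorphism statement,'' but what was established is only that $\varphi$ is a chain map and that it respects the action filtration. The paper's proof devotes a separate paragraph to bijectivity, running the triangular-matrix argument and observing the one fact that is new in the twisted setting: for $\mathcal{E}(q)\le -A_H(x)$ the moduli space $\mathcal{M}_0^+(q,x)$ is nonempty only when $(t,x)=\mathfrak{L}(t,q,\dot q)$, in which case the unique $u_+$ and its associated $w$ are constant in $s$, so the twisted diagonal coefficient is $\epsilon(u_+)\,t^{-\alpha[w]+\alpha[\pi u_+]}=\epsilon(u_+)\cdot t^0=\pm 1$ --- in particular, a unit. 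Your well-definedness discussion does gesture at the triangular structure (finitely many generators above a fixed action), but you never record that the diagonal entries remain units once the local-system weights are inserted. Since the entire isomorphism claim rests on this, the step needs to be made explicit; it is a short verification, but it cannot be left as ``inherited from the untwisted case'' because the $t$-exponent on the diagonal is precisely the new data the twisting introduces.
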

\begin{proof}
Figure \ref{FigureAbbondandoloSchwarz} shows the possible
connected components of $\mathcal{M}_1^{+}(q,x)$. Evaluating
$d\alpha=0$ on a component equals the sum of the values of
$\alpha$ on the broken trajectories. For instance, in the second
figure
$$
-\alpha[w'] + \alpha[\pi u_+'] + \alpha[\pi u'] = -\alpha[v''] -
\alpha[w''] + \alpha[\pi u_+''],
$$
and therefore, since
$\epsilon(u')\epsilon(u_+')=\epsilon(v'')\epsilon(u''_+)$,
$$
\epsilon(u')\epsilon(u_+') \; t^{-\alpha[w'] + \alpha[\pi u_+']}
\; t^{\alpha[\pi u']} = \epsilon(v'')\epsilon(u''_+) \;
t^{-\alpha[v'']} \; t^{-\alpha[w''] + \alpha[\pi u''_+]}.
$$
Thus the broken trajectories contribute equally to
$\delta^{H}(\varphi(q))$ and $\varphi(\delta^{\mathcal{E}}(q))$. A
similar computation shows that in the first and third figures the
broken trajectories contribute zero respectively to
$\varphi(\delta^{\mathcal{E}}(q))$ and $\delta^{H}(\varphi(q))$.
Hence
$$
\begin{array}{l}
\delta^{H}(\varphi(q)) = \displaystyle \sum_{(u',u_+') \in
\mathcal{M}_0^{H}(x,x') \times \mathcal{M}_0^{+}(q,x')}
\epsilon(u')\,
t^{\alpha[\pi u']} \cdot \epsilon(u_+')\, t^{-\alpha[w'] + \; \alpha[\pi u_+']} \, x  \; = \\
\quad = \displaystyle \sum_{(v'',u''_+)\in
\mathcal{M}_0^{\mathcal{E}}(q,p) \times \mathcal{M}_0^{+}(p,x)}
\epsilon(v'')\, t^{-\alpha[v'']} \cdot \epsilon(u''_+)\,
t^{-\alpha[w''] + \alpha[\pi u_+'']} \, x =
\varphi(\delta^{\mathcal{E}}(q)),
\end{array}
$$
where we sum respectively over $x,x'$ and over $x,p$, and where
$w'$, $w''$ are the $-\nabla \mathcal{E}$ trajectories ending in
$\pi u'_+(0,\cdot)$, $\pi u_+''(0,\cdot)$. Hence $\varphi$ is a
chain map.

That $\varphi$ is an isomorphism follows just as in the untwisted
case, because for $\mathcal{E}(q)\leq -A_H(x)$ the only nonempty
$\mathcal{M}_0^{+}(q,x)$ occurs when
$(t,x)=\mathfrak{L}(t,q,\dot{q})$, and in this case
$\mathcal{M}_0^{+}(q,x)=\{u_+\}$ where $u_+\equiv x$ and $w \equiv
q$ are independent of $s \in \R$ and so the coefficient of $x$ in
$\varphi(q)$ is
\[
\epsilon(u_+)\,t^{-\alpha [w] + \alpha [ \pi u_+ ]} =
\epsilon(u_+) = \pm 1.
\]

The last statement in the claim is a consequence of the
identification of the Morse cohomology of $(\mathcal{L}^1
N,\mathcal{E},G)$ with the singular cohomology of $\mathcal{L}^1
N$ just as in \cite{Abbondandolo-Schwarz}, after introducing the
system $\underline{\Lambda}_{\alpha}$ of local coefficients.
\end{proof}


\section{Viterbo Functoriality} \label{SectionViterboFunctoriality}
Let $(M^{2n},\theta)$ be a Liouville domain
(\ref{SubsectionLiouvilleDomainSetup}), and suppose
$$
i \co (W^{2n},\theta') \hookrightarrow (M^{2n},\theta)
$$
is a Liouville embedded subdomain, that is we require that
$i^*\theta - e^\rho\theta'$ is exact for some $\rho \in \R$. For
example the embedding $DT^*L \hookrightarrow DT^*N$, obtained by
extending an exact Lagrangian embedding $L \hookrightarrow DT^*N$
to a neighbourhood of $L$, is of this type. We fix $\delta>0$ with
$$
0<\delta< \textnormal{min} \; \{ \textnormal{periods of the
nonconstant Reeb orbits on } \partial M \textnormal{ and }
\partial W \}.
$$

We will now recall the construction of Viterbo's commutative
diagram (\cite{Viterbo1}):
$$
\xymatrix{ SH^*(W) \ar@{<-}^{c_*}@<1ex>[d]\ar@{<-}[r]^{SH^*(i)} &
SH^*(M)\ar@{<-}^{c_*}@<1ex>[d] \\
H^*(W) \ar@{<-}[r]^{i^*} & H^*(M) }
$$
%
%
%
\subsection{Hamiltonians with small
slopes}\label{SubsectionHamiltoniansWithSmallSlopes}
We now consider Hamiltonians $H^0$ as in
(\ref{SubsectionHamiltoniansLinearAtInfty}), which are $C^2$-close
to a constant on $\hat{M} \setminus (0,\infty) \times
\partial M$; $H^0=h(\x)$ with slopes $h'(\x)\leq \delta$ for
$\x\geq 0$; and which have constant slope $h'(\x)=m>0$ for $\x
\geq \x_0$.

A standard result in Floer homology is that (after a generic
$C^2$-small time-independent perturbation of $(H^0,J)$) the
$1$-periodic orbits of $X_{H^0}$ and the Floer trajectories
connecting them inside $\hat{M} \setminus \{ \x \geq \x_0 \}$ are
both independent of $t\in S^1$, and so these orbits correspond to
critical points of $H^0$ and these Floer trajectories correspond
to negative gradient trajectories of $H^0$. By the maximum
principle, the Floer trajectories connecting these orbits do not
enter the region $\{ \x \geq \x_0 \}$, and by the choice of
$\delta$ there are no $1$-periodic orbits in $\{\x \geq \x_0 \}$
since there $0<h'(\x)\leq \delta$.

The Floer complex $CF^*(H^0)$ is therefore canonically identified
with the Morse complex $CM^*(H^0)$, which is generated by
$\textnormal{Crit}(H^0)$ and whose differential counts the
$-\nabla H^0$ trajectories. The Morse cohomology $HM^*(H^0)$ is
isomorphic to the singular cohomology of $\hat{M}$ (which is
homotopy equivalent to $M$), so
$$
HF^*(H^0) \cong HM^*(H^0) \cong H^*(M).
$$
Moreover, by Morse cohomology, a different choice ${H^0}'$ of
$H^0$ yields an isomorphism $HM^*({H^0}') \cong H^*(M)$ which
commutes with $HM^*(H^0) \cong H^*(M)$ via the continuation
isomorphism $HM^*(H^0) \to HM^*({H^0}')$.
%
%
\subsection{Construction of
$\mathbf{c_*}$}\label{SubsectionConstructionOfc}
%
Recall from (\ref{SubsectionSymplCohAsDirectLimit}) that
$$
SH^*(M) = \lim_{\longrightarrow} HF^*(H),
$$
where the direct limit is over the continuation maps for
Hamiltonians $H$ which equal $h^m_{c,C}(\x)= m(\x -c) + C$ for $\x
\gg 0$, ordered by increasing slopes $m>0$.

Since $H^0$ is such a Hamiltonian, there is a natural map
$HF^*(H^0) \to \lim HF^*(H)$ arising as a direct limit of
continuation maps. By \ref{SubsectionHamiltoniansWithSmallSlopes},
this defines a map
$$
c_* \co  H^*(M) \to SH^*(M).
$$
A different choice ${H^0}'$ yields a map $HF^*({H^0}') \to
SH^*(M)$ which commutes with the map $HF^*(H^0) \to SH^*(M)$ via
the continuation isomorphism $HF^*(H^0) \to HF^*({H^0}')$.
Together with \ref{SubsectionHamiltoniansWithSmallSlopes}, this
shows that $c_*$ is independent of the choice of $H^0$.
%
\subsection{Diagonal-step shaped Hamiltonians}
We now consider the Liouville subdomain $i \co W\hookrightarrow
M$. The $\partial_{r}-$Liouville flow for $\theta'$ defines a
tubular neighbourhood $(0,1+\epsilon) \times \partial W$ of
$\partial W$ inside $\hat{M}$, where $\partial W$ corresponds to
$\x=e^r=1$. This coordinate $\x$ may not extend to
$\hat{M}\setminus W$, and it should not be confused with the $\x$
we previously used to parametrize $(0,\infty)\times \partial M
\subset \hat{M}$.

\begin{figure}
\includegraphics[width=0.6\textwidth]{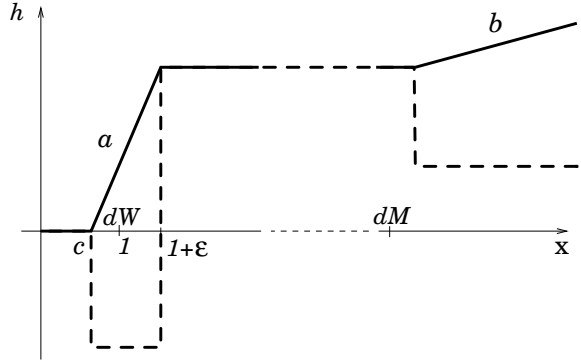}
\caption{The solid line is a diagonal-step shaped Hamiltonian
$h=h_c^{a,b}$ with slopes $a\gg b$. The dashed line is the action
function $A_h(\x)=-\x h'(\x) + h(\x)$.}\label{FigureViterbo}
\end{figure}

We consider diagonal-step shaped Hamiltonians $H$ as in Figure
\ref{FigureViterbo}, which are zero on $W \setminus \{\x \geq c\}$
and which equal $h_c^{a,b}(\x)$ on $\{\x \geq c\}$, where
$h_c^{a,b}$ is piecewise linear with slope $b$ at infinity; with
slope $a \gg b$ on $(c,1+\epsilon)$; and which is constant
elsewhere. We assume that $0 \leq c \leq 1$ and that $a,b$ are
chosen generically so that they are not periods of Reeb orbits
(see \ref{SubsectionReebDynamics}).

As usual, before we take Floer complexes we replace $H$ by a
generic $C^2$-small time-dependent perturbation of it, and the
orbits and action values that we will mention take this into
account. Let $M'\subset \hat{M}$ be the compact subset where $h$
does not have slope $b$. Observe Figure \ref{FigureViterbo}: the
$1$-periodic orbits of $X_H$ that can arise are:
\begin{enumerate}
\item\label{ItemTypeOfOrbits1} critical points of $H$ inside $W
\setminus \{\x \geq c\}$ of action very close to $0$;

\item\label{ItemTypeOfOrbits2} nonconstant orbits near $\x = c$ of
action in $(-ac,-\delta c)$;

\item\label{ItemTypeOfOrbits3} nonconstant orbits near $\x =
1+\epsilon$ of action in $(-ac,a(1+\epsilon-c))$;

\item\label{ItemTypeOfOrbits4} critical points of $H$ in $M'
\setminus (W\cup \{\x\leq 1+\epsilon \})$ of action close to
$a(1+\epsilon-c)$;

\item\label{ItemTypeOfOrbits5} nonconstant orbits near $\partial
M'$ of action $\gg 0$ provided $a \gg b$.
\end{enumerate}

Since the complement of the Reeb periods is open, there are no
Reeb periods in $(a-\nu_a,a+\nu_a)$ for some small $\nu_a>0$. Thus
the actions in case (\ref{ItemTypeOfOrbits3}) will be at least
$$
-(a-\nu_a)(1+\epsilon) + a(1+\epsilon - c) = \nu_a(1+\epsilon) -
ac,
$$
and for sufficiently small $c$, depending on $a$, we can ensure
that this is at least $\nu_a$. Hence (after a suitable
perturbation of $H$) we can ensure that if $a \gg b$ and $c \ll
a^{-1}$ then the actions of (\ref{ItemTypeOfOrbits1}),
(\ref{ItemTypeOfOrbits2}) are negative and those of
(\ref{ItemTypeOfOrbits3}), (\ref{ItemTypeOfOrbits4}),
(\ref{ItemTypeOfOrbits5}) are positive.
%
\subsection{Construction of $\mathbf{SH^*(i)}$}\label{SubsectionConstructionOfSH}

Suppose $H$ is a (perturbed) diagonal-step shaped Hamiltonian,
with $a \gg b$ and $c \ll a^{-1}$ so that the orbits in $W$ have
negative actions and those outside $W$ have positive actions. We
write $CF^*(M,H)$ to emphasize that the Floer complex is computed
for $M$. Consider the action-restriction map
(\ref{SubsectionFloerChainCx})
$$
CF^*(M,H;-\infty,0) \leftarrow CF^*(M,H).
$$
Given two diagonal-step shaped Hamiltonians $H,H'$ with $H\leq H'$
everywhere, pick a homotopy $H_s$ from $H'$ to $H$ which is
monotone ($\partial_s H_s \leq 0$). The induced continuation map
$\phi \co  CF^*(M,H) \to CF^*(M,H')$ restricts to a map on the
quotient complexes $\phi \co  CF^*(M,H;-\infty,0) \to
CF^*(M,H';-\infty,0)$ because the action decreases along Floer
trajectories when $H_s$ is monotone (see
\ref{SubsectionContinuationMaps}).

Consider the Hamiltonian $H_W$ on the completion $\hat{W}=W
\cup_{\partial W} [0,\infty)\times\partial W$ which equals $H$
inside $W$ and which is linear with slope $a$ outside $W$. Then
the quotient complex $CF^*(M,H;-\infty,0)$ can be identified with
$CF^*(W,H_W)$ by showing that there are no Floer trajectories
connecting $1$-periodic orbits of $X_{H}$ in $\hat{M}$ which exit
$W \cup \{ \x \leq 1+\epsilon \}$. Therefore we obtain the
commutative diagram
$$
\xymatrix{ CF^*(W,H_W') \ar@{<-}[r] \ar@{<-}[d] & CF^*(M,H') \ar@{<-}[d] \\
CF^*(W,H_W) \ar@{<-}[r] & CF^*(M,H) }
$$
where the vertical maps are continuation maps and where the
horizontal maps arose from action-restriction maps. Taking
cohomology, and then taking the direct limit as $a \gg b \to
\infty$ (so $c \ll a^{-1} \to 0$) defines the map $SH^*(i)$,
$$
SH^*(i) \co SH^*(W) \leftarrow SH^*(M).
$$
%
%
\subsection{Viterbo
functoriality}\label{SubsectionViterboFunctoriality}
Consider a (perturbed) diagonal-step shaped Hamiltonian $H=H^0$
with $c=1$ and slopes $0< b \ll a < \delta$ so that the orbits
inside $W$ have negative actions and those outside $W$ have
positive actions. Then $H^0$ and the corresponding $H^0_W$ are of
the type described in
(\ref{SubsectionHamiltoniansWithSmallSlopes}) for $M$ and $W$
respectively. The action-restriction map $CF^*(W,H^0_W) \leftarrow
CF^*(M,H^0)$ is then identified with the map on Morse complexes
$CM^*(W,H^0|_W) \leftarrow CM^*(M,H^0)$ which restricts to the
generators $x \in \textnormal{Crit}(H^0)$ with $H^0(x)<0$. In
cohomology this map corresponds to the pullback on singular
cohomology $i^* \co H^*(W) \leftarrow H^*(M)$.

This identifies $CM^*(W,H^0|_W) \leftarrow CM^*(M,H^0)$ with the
bottom map of the diagram in (\ref{SubsectionConstructionOfSH})
when we take $H=H^0$, and so taking the direct limit over the $H'$
we obtain Viterbo's commutative diagram in cohomology:
$$
\xymatrix{ SH^*(W) \ar@{<-}^{c_*}@<1ex>[d]\ar@{<-}[r]^{SH^*(i)} &
SH^*(M)\ar@{<-}^{c_*}@<1ex>[d] \\
H^*(W) \ar@{<-}[r]^{i^*} & H^*(M) }
$$
%
%
%
\subsection{Twisted Viterbo
functoriality}\label{SubsectionTwistedViterboFunctoriality}
We now introduce the twisted coefficients
$\underline{\Lambda}_{\alpha}$ for some $\alpha \in
H^1(\mathcal{L} \hat{M}) \cong H^1(\mathcal{L} M)$, as explained
in (\ref{SubsectionNovikovFloerCohomology}) and
(\ref{SubsectionNovikovSymplecticCohomology}). Recall that we have
constructed twisted continuation maps
(\ref{SubsectionTwistedContinuationMaps}) which compose well, so
the discussion of (\ref{SubsectionConstructionOfc}) and
(\ref{SubsectionConstructionOfSH}) will hold in the twisted case
provided that we understand how the local systems restrict.

Suppose $H^0$ is a Hamiltonian with small slope as in
(\ref{SubsectionHamiltoniansWithSmallSlopes}). In the twisted case
the canonical identification of $CF^*(H^0)$ with the Morse complex
$CM^*(H^0)$ becomes
$$
CF^*(H^0;\underline{\Lambda}_{\alpha}) =
CM^*(H^0;c^*\underline{\Lambda}_{\alpha}),
$$
where $c^*\underline{\Lambda}_{\alpha}$ is the restriction of
$\underline{\Lambda}_{\alpha}$ to the local system on $\hat{M}
\subset \mathcal{L}_0\hat{M}$ which consists of a copy $\Lambda_m$
of $\Lambda$ over each $m \in \hat{M}$ and of the multiplication
isomorphism $t^{\alpha[c \circ v]} = t^{c^*\alpha[v]} \co
\Lambda_m \to \Lambda_{m'}$ for every path $v(s)$ in $\hat{M}$
joining $m$ to $m'$, and where the twisted Morse differential is
defined on $q_{+} \in \textnormal{Crit}(H^0)$ analogously to the
Floer case:
$$
\delta q_{+} = \sum \{ \epsilon(v) \, t^{c^*\alpha[v]} \, q_{-} :
q_{-}\in \textnormal{Crit}(H^0),
\partial_s v = -\nabla H^0(v), \lim_{s \to \pm \infty} v(s) =
q_{\pm} \}.
$$
By mimicking the proof that $HM^*(H^0) \cong H^*(M)$, for twisted
coefficients we have $HM^*(H^0;c^*\underline{\Lambda}_{\alpha})
\cong H^*(M;c^*\underline{\Lambda}_{\alpha})$ (singular cohomology
with coefficients in the local system
$c^*\underline{\Lambda}_{\alpha}$, as defined in
\cite{Whitehead}).

As in (\ref{SubsectionConstructionOfc}), we get twisted
continuation maps $CF^*(H^0;\underline{\Lambda}_{\alpha}) \to
CF^*(H;\underline{\Lambda}_{\alpha})$ for Hamiltonians $H$ linear
at infinity. In cohomology these maps yield a morphism
$HF^*(H^0;\underline{\Lambda}_{\alpha}) \to \lim
HF^*(H;\underline{\Lambda}_{\alpha})$, where the direct limit is
taken over twisted continuation maps as the slopes at infinity of
the $H$ increase. This defines
$$
c_* \co H^*(M;c^*\underline{\Lambda}_{\alpha}) \to
SH^*(M;\underline{\Lambda}_{\alpha}).
$$
In (\ref{SubsectionConstructionOfSH}) we get action-restriction
maps $CF^*(M,H;\underline{\Lambda}_{\alpha};-\infty,0) \leftarrow
CF^*(M,H;\underline{\Lambda}_{\alpha})$, and two choices of
diagonal-step shaped Hamiltonians $H,H'$ with $H\leq H'$ induce a
continuation map $\phi \co  CF^*(M,H;\underline{\Lambda}_{\alpha})
\to CF^*(M,H';\underline{\Lambda}_{\alpha})$ which restricts to
the quotient complexes $\phi \co
CF^*(M,H;\underline{\Lambda}_{\alpha};-\infty,0) \to
CF^*(M,H';\underline{\Lambda}_{\alpha};-\infty,0)$.

Let $\mathcal{L}i \co  \mathcal{L}W \to \mathcal{L}M$ be the map
induced by $i$. As in (\ref{SubsectionConstructionOfSH}), the
quotient complex
$CF^*(M,H;\underline{\Lambda}_{\alpha};-\infty,0)$ can be
identified with
$CF^*(W,H_W;\underline{\Lambda}_{(\mathcal{L}i)^*\alpha})$ because
there are no Floer trajectories connecting $1$-periodic orbits of
$X_{H}$ which exit $W \cup \{ \x \leq 1+\epsilon \}$ in $\hat{M}$
and so the twisted differentials of the two complexes agree since
$(\mathcal{L}i)^* \alpha$ and $\alpha$ agree on the common Floer
trajectories inside $W \cup \{ \x \leq 1+\epsilon \}$.

As in (\ref{SubsectionConstructionOfSH}), the direct limit over
the twisted continuation maps for diagonal-step shaped $H$ of the
action-restriction maps
$$
CF^*(W,H_W;\underline{\Lambda}_{(\mathcal{L}i)^*\alpha})
\leftarrow CF^*(M,H;\underline{\Lambda}_{\alpha})
$$
as $a \gg b \to \infty$ will define a twisted map $SH^*(i)$ in
cohomology,
$$
SH^*(i) \co SH^*(W;\underline{\Lambda}_{(\mathcal{L}i)^*\alpha})
\leftarrow SH^*(M;\underline{\Lambda}_{\alpha}).
$$

As in (\ref{SubsectionViterboFunctoriality}), the
action-restriction maps fit into a commutative diagram
$$
\xymatrix{
CF^*(W,H_W';\underline{\Lambda}_{(\mathcal{L}i)^*\alpha})
\ar@{<-}[r] \ar@{<-}[d] & CF^*(M,H';\underline{\Lambda}_{\alpha}) \ar@{<-}[d] \\
CM^*(W,H^0_W;c^*\underline{\Lambda}_{(\mathcal{L}i)^*\alpha})
\ar@{<-}[r] & CM^*(M,H^0;c^*\underline{\Lambda}_{\alpha}) }
$$
and taking the direct limit over the $H'$ yields the following
result in cohomology.
\begin{theorem}\label{TheoremViterboTwistedFunctoriality}
Let $(M^{2n},\theta)$ be a Liouville domain. Then for all $\alpha
\in H^1({\mathcal{L}M})$ there exists a map $c_* \co
H^*(M;c^*\underline{\Lambda}_{\alpha}) \to
SH^*(M;\underline{\Lambda}_{\alpha})$, where $c \co  M \to
\mathcal{L}M$ is the inclusion of constant loops. Moreover, for
any Liouville embedding $i \co (W^{2n},\theta') \to
(M^{2n},\theta)$ there exists a map $SH^*(i) \co
SH^*(W;\underline{\Lambda}_{(\mathcal{L}i)^*\alpha}) \leftarrow
SH^*(M;\underline{\Lambda}_{\alpha})$ which fits into the
commutative diagram
$$
\xymatrix{ SH^*(W;\underline{\Lambda}_{(\mathcal{L}i)^*\alpha})
\ar@{<-}[r]^-{SH^*(i)} \ar@{<-}[d]^{c_*} &
SH^*(M;\underline{\Lambda}_{\alpha}) \ar@{<-}[d]^{c_*} \\
H^*(W;c^*\underline{\Lambda}_{(\mathcal{L}i)^*\alpha})
\ar@{<-}[r]^-{i^*} & H^*(M;c^*\underline{\Lambda}_{\alpha}) }
$$
\end{theorem}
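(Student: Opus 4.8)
The plan is that all the ingredients have already been assembled in \ref{SubsectionTwistedViterboFunctoriality}, so the proof amounts to checking that those constructions are well defined and then passing to the appropriate limit. First I would fix the map $c_*$. For a small-slope Hamiltonian $H^0$ as in \ref{SubsectionHamiltoniansWithSmallSlopes}, the canonical identification $CF^*(H^0;\underline{\Lambda}_{\alpha}) = CM^*(H^0;c^*\underline{\Lambda}_{\alpha})$ together with the twisted Morse-to-singular isomorphism $HM^*(H^0;c^*\underline{\Lambda}_{\alpha}) \cong H^*(M;c^*\underline{\Lambda}_{\alpha})$ gives a map $H^*(M;c^*\underline{\Lambda}_{\alpha}) \to HF^*(H^0;\underline{\Lambda}_{\alpha})$, and composing with the direct-limit-of-twisted-continuation-maps morphism $HF^*(H^0;\underline{\Lambda}_{\alpha}) \to SH^*(M;\underline{\Lambda}_{\alpha})$ defines $c_*$. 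Independence of the choice of $H^0$ follows exactly as in the untwisted \ref{SubsectionConstructionOfc}: two small-slope Hamiltonians are joined by a twisted continuation isomorphism, the triangle with the direct limit commutes because the twisted continuation maps compose well (established in \ref{SubsectionTwistedContinuationMaps}), and on Morse cohomology the continuation isomorphism realizes the identity on $H^*(M;c^*\underline{\Lambda}_{\alpha})$.

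Next I would construct $SH^*(i)$. Fix a perturbed diagonal-step shaped Hamiltonian $H$ with $a \gg b$ and $c \ll a^{-1}$, so the orbits in $W$ have negative action and those outside $W$ have positive action. The action-restriction quotient $CF^*(M,H;\underline{\Lambda}_{\alpha};-\infty,0)$ is identified with $CF^*(W,H_W;\underline{\Lambda}_{(\mathcal{L}i)^*\alpha})$: the generators coincide, by the maximum principle no Floer trajectory between negative-action orbits exits $W \cup \{\x \le 1+\epsilon\}$, where $H$ and $H_W$ agree, so the untwisted differentials agree, and the twisted differentials then agree because on such a cylinder $u$ the map $\mathcal{L}i$ is the inclusion, whence $(\mathcal{L}i)^*\alpha[u] = \alpha[\mathcal{L}i\circ u] = \alpha[u]$. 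For two diagonal-step Hamiltonians with $H \le H'$ the monotone homotopy induces a twisted continuation map $CF^*(M,H;\underline{\Lambda}_{\alpha}) \to CF^*(M,H';\underline{\Lambda}_{\alpha})$ which descends to the $-\infty,0$ quotient complexes (the action filtration being unaffected by the twisting) and is compatible with the corresponding twisted continuation map on the $W$-side, yielding the commuting square of chain complexes displayed in \ref{SubsectionTwistedViterboFunctoriality}. Taking cohomology and then the direct limit as $a \gg b \to \infty$ (equivalently $c \ll a^{-1} \to 0$) defines $SH^*(i)$; exactness of direct limits and the good composition of twisted continuation maps make this independent of the cofinal family chosen.

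For the commutative square I would specialize to the diagonal-step Hamiltonian $H^0$ with $c = 1$ and $0 < b \ll a < \delta$, as in \ref{SubsectionViterboFunctoriality}. Then $H^0$ on $M$ and the associated $H^0_W$ on $W$ are small-slope Hamiltonians, their twisted Floer complexes are the twisted Morse complexes, and the action-restriction map is identified with the Morse restriction $CM^*(W,H^0|_W;c^*\underline{\Lambda}_{(\mathcal{L}i)^*\alpha}) \leftarrow CM^*(M,H^0;c^*\underline{\Lambda}_{\alpha})$ onto the critical points with $H^0 < 0$. On cohomology this is the pullback $i^* \co H^*(W;c^*\underline{\Lambda}_{(\mathcal{L}i)^*\alpha}) \leftarrow H^*(M;c^*\underline{\Lambda}_{\alpha})$ with twisted coefficients, using $i^*(c^*\underline{\Lambda}_{\alpha}) = c^*((\mathcal{L}i)^*\underline{\Lambda}_{\alpha})$, which holds because the two singular cocycles pulled back to $W$ represent the same class. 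This identifies the bottom edge of the chain-level square with $i^*$; taking the direct limit over the $H'$ then produces the asserted commutative diagram, with the two $c_*$ maps sitting on its vertical edges by the construction above.

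I expect the main obstacle to be the chain-level identification of the quotient complex $CF^*(M,H;\underline{\Lambda}_{\alpha};-\infty,0)$ with $CF^*(W,H_W;\underline{\Lambda}_{(\mathcal{L}i)^*\alpha})$ together with its compatibility with the twisted continuation maps: one must track that both the confining Floer trajectories and the homotopy trajectories defining the continuation maps remain inside $W \cup \{\x \le 1+\epsilon\}$, and that on each such cylinder the local-system weight computed in $\mathcal{L}M$ with $\alpha$ equals the weight computed in $\mathcal{L}W$ with $(\mathcal{L}i)^*\alpha$ — this is where the hypothesis that $i$ is a genuine embedding, so that $\mathcal{L}i$ is near these trajectories the inclusion, is used. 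Once this bookkeeping is in place, the square of twisted complexes commutes on the nose, and the remaining steps — passing to cohomology, taking the direct limit, and recognizing the bottom map as $i^*$ — are formal and mimic the untwisted arguments of Section \ref{SectionViterboFunctoriality}.
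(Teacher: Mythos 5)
Your proposal follows the paper's own construction in Section~\ref{SubsectionTwistedViterboFunctoriality} in all essentials: small-slope Hamiltonians give $c_*$ via the twisted Floer-to-Morse identification and direct limit of twisted continuation maps; diagonal-step Hamiltonians with action-restriction give $SH^*(i)$, using that Floer trajectories between negative-action orbits stay in $W\cup\{\x\le 1+\epsilon\}$ where $\alpha$ and $(\mathcal{L}i)^*\alpha$ agree; and the commutative square comes from specializing to $c=1$, $0<b\ll a<\delta$ and taking the direct limit. Your extra checks (independence of $c_*$ from $H^0$, and the identity $i^*(c^*\underline{\Lambda}_{\alpha})=c^*((\mathcal{L}i)^*\underline{\Lambda}_{\alpha})$ coming from $c\circ i=\mathcal{L}i\circ c$) are sound and merely make explicit points the paper leaves implicit.
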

%

\section{Proof of the Main Theorem}\label{SectionProofMainTheorem}

\begin{lemma}\label{LemmaStandardInclusion}
Let $N^n$ be a closed manifold and let $L \to DT^*N$ be an exact
Lagrangian embedding. Then for all $\alpha \in H^1(\mathcal{L}N)$,
the composite
$$
H_*(N;c^*\underline{\Lambda}_{\alpha})
\stackrel{\sim}{\longrightarrow}
H^{n-*}(N;c^*\underline{\Lambda}_{\alpha})
\stackrel{c_*}{\longrightarrow}
SH^{n-*}(DT^*N;\underline{\Lambda}_{(\mathcal{L}\pi)^*\alpha})
\stackrel{\varphi^{-1}}{\longrightarrow}
H_*(\mathcal{L}N;\underline{\Lambda}_{\alpha})
$$
of Poincar\'e duality, the map $c_*$ from
(\ref{SubsectionTwistedViterboFunctoriality}) and the inverse of
$\varphi$ (Theorem \ref{TheoremAbbondandoloSchwarzTwisted}), is
equal to the ordinary map $c_* \co
H_*(N;c^*\underline{\Lambda}_{\alpha}) \to
H_*(\mathcal{L}N;\underline{\Lambda}_{\alpha})$ induced by the
inclusion of constants $c \co N \to \mathcal{L}N$.
\end{lemma}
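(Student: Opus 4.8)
The plan is to compute the composite at the chain level with a single quadratic Hamiltonian, using that all three maps respect the action filtration and that at the bottom of that filtration everything reduces to Morse theory of the zero section $N\subset DT^*N$. Concretely, I would fix a $C^2$-small Morse function $V$ on $N$, take $L(t,q,v)=\tfrac12|v|_g^2-V(q)$ with Legendre-dual Hamiltonian $H(t,q,p)=\tfrac12|p|_g^2+V(q)$, and perturb them generically away from the zero section so that $(L,G,H,J)$ is regular. Then $H$ has quadratic growth with $h'(\x)\to\infty$, so $HF^*(H;\underline{\Lambda}_{(\mathcal L\pi)^*\alpha})=SH^*(DT^*N;\underline{\Lambda}_{(\mathcal L\pi)^*\alpha})$ (by \ref{SubsectionSymplecticCohUsingOneHam}, \ref{SubsectionNovikovSymplecticCohomology}) and $\varphi$ of Theorem~\ref{TheoremAbbondandoloSchwarzTwisted} is available. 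Choosing $a>0$ strictly between $\max_N|V|$ and $\tfrac12\ell^2-\max_N|V|$, where $\ell$ is the length of the shortest non-constant closed geodesic of $(N,g)$ (possible once $V$ is small), and keeping all perturbations so small that no action value crosses $\pm a$, the critical points $q$ of $\mathcal E$ with $\mathcal E(q)<a$ and the $1$-periodic orbits $x$ of $X_H$ with $-A_H(x)<a$ are in both cases exactly the constant loops at critical points of $V$ (such a constant orbit has $A_H=V(p)\in(-a,a)$, while a non-constant orbit of $X_H$, resp.\ a non-constant critical point of $\mathcal E$, has $-A_H$, resp.\ $\mathcal E$, at least $\tfrac12\ell^2-\max_N|V|>a$).

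Next I would factor the composite through the bottom-of-the-filtration subcomplexes and exhibit two commuting squares. By the action-filtration compatibility of $\varphi$ in Theorem~\ref{TheoremAbbondandoloSchwarzTwisted}, $\varphi$ restricts to a chain isomorphism $\varphi_0\co CM_*(\mathcal E;\underline{\Lambda}_\alpha;-\infty,a)\xrightarrow{\sim}CF^{n-*}(H;\underline{\Lambda}_{(\mathcal L\pi)^*\alpha};-a,\infty)$ and commutes with the inclusions into the full complexes. On the Morse side, the action-filtered identification of the Morse homology of $(\mathcal L^1N,\mathcal E,G)$ with singular homology (\ref{SubsectionLagrangianMorseFunctional}), together with the standard fact that $\{\mathcal E<a\}\subset\mathcal L^1N$ deformation-retracts onto the constant loops $N$ (a pseudo-gradient flow of $\mathcal E$ does this, $a$ being below the first non-constant critical level), identifies $H_*(CM_*(\mathcal E;\underline{\Lambda}_\alpha;-\infty,a))$ with $H_*(N;c^*\underline{\Lambda}_\alpha)$ in such a way that the inclusion into $CM_*(\mathcal E;\underline{\Lambda}_\alpha)$ induces the topological map $c_*\co H_*(N;c^*\underline{\Lambda}_\alpha)\to H_*(\mathcal LN;\underline{\Lambda}_\alpha)$. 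On the Floer side, the differential on $CF^*(H;\underline{\Lambda}_{(\mathcal L\pi)^*\alpha};-a,\infty)$ counts only low-energy cylinders near the zero section, which are the negative gradient trajectories of $V$ on $N$ (and the Novikov weight $t^{\alpha[\pi u]}$ becomes the Morse weight $t^{c^*\alpha[\gamma]}$), so this subcomplex is the twisted Morse cochain complex of $V$ and computes $H^{n-*}(N;c^*\underline{\Lambda}_\alpha)$; under these two identifications $\varphi_0$ becomes Poincar\'e duality. This is the twisted form of the fact from \cite{Abbondandolo-Schwarz} that $\varphi$ restricts to the Poincar\'e pairing of the zero section: in the notation of the proof of Theorem~\ref{TheoremAbbondandoloSchwarzTwisted} the coefficient of $x$ in $\varphi(q)$ is $\epsilon(u_+)\,t^{-\alpha[w]+\alpha[\pi u_+]}=\pm1$ for each relevant constant orbit ($w$ and $\pi u_+$ being constant paths), so $\varphi_0$ is the ``identity on critical points'' chain map between the twisted Morse complexes of $-V$ and $V$.

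It remains to see that the Viterbo map $c_*$ of \ref{SubsectionTwistedViterboFunctoriality} equals the inclusion-induced map $HF^{n-*}(H;\underline{\Lambda}_{(\mathcal L\pi)^*\alpha};-a,\infty)\to HF^{n-*}(H;\underline{\Lambda}_{(\mathcal L\pi)^*\alpha})=SH^{n-*}$. Recall $c_*$ is the canonical isomorphism $H^*(DT^*N;c^*\underline{\Lambda}_{(\mathcal L\pi)^*\alpha})=HF^*(H^0;\underline{\Lambda}_{(\mathcal L\pi)^*\alpha})$ for a small-slope $H^0$ (\ref{SubsectionHamiltoniansWithSmallSlopes}) followed by a direct limit of twisted continuation maps into $SH^*$; taking $H^0$ so small that all its orbits satisfy $|A_{H^0}|<a$ and using the cofinal quadratic $H$, this is the twisted continuation map $HF^*(H^0;\cdots)\to HF^*(H;\cdots)$ followed by $HF^*(H;\cdots)=SH^*$. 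Since every orbit of $H^0$ lies in the window $(-a,\infty)$ and a monotone homotopy from $H$ to $H^0$ keeps continuation trajectories in that window, the continuation map restricts on the window to a continuation between the Morse complexes of $H^0|_{DT^*N}$ and of $V$ — both computing $H^*(N;c^*\underline{\Lambda}_\alpha)$ — hence an isomorphism. Thus $c_*$ is precisely the inclusion of the bottom subcomplex. Chasing the two squares then yields $\varphi^{-1}\circ c_*\circ(\text{Poincar\'e duality})=\varphi^{-1}\circ(\text{inclusion})\circ\varphi_0$, which by the first square is the inclusion-induced map on $CM_*(\mathcal E;\underline{\Lambda}_\alpha;-\infty,a)$, i.e.\ the ordinary $c_*\co H_*(N;c^*\underline{\Lambda}_\alpha)\to H_*(\mathcal LN;\underline{\Lambda}_\alpha)$.

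The step I expect to be the main obstacle is this last identification of the Viterbo $c_*$ with the inclusion of the bottom of the action filtration of the quadratic $H$: it requires matching the symplectic continuation maps between $H^0$ and $H$ with the Morse-theoretic continuation on the zero section and arranging the homotopies so that continuation trajectories stay in the window $(-a,\infty)$. The remaining bookkeeping — that the local systems $\underline{\Lambda}_\alpha$, $\underline{\Lambda}_{(\mathcal L\pi)^*\alpha}$ and their restrictions under $c$, $\pi$ and the deformation retractions all agree, and that the statement ``$\varphi$ restricts to Poincar\'e duality'' survives the passage to twisted coefficients — is routine, since on constant orbits and constant trajectories the Novikov weights are all $\pm1$.
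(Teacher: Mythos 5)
Your argument is correct and is a fleshed-out version of exactly the approach the paper indicates: using small perturbations of $L=\tfrac12|v|^2$ and $H=\tfrac12|p|^2$ and restricting $\varphi$ to the orbits with action close to zero, where both sides reduce to Morse theory of $V$ on $N$. Your additional action-window bookkeeping and the identification of the Viterbo $c_*$ with the inclusion of the low-action subcomplex via a monotone continuation from $H^0$ to the quadratic $H$ are precisely the intended ingredients.
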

In the untwisted case, the lemma was proved by Viterbo
\cite{Viterbo3} using his construction of the isomorphism
$\varphi$, and it can be proved in the Abbondandolo-Schwarz setup
by using small perturbations of $L(q,v)=\frac{1}{2}|v|^2$ and
$H(q,p) = \frac{1}{2}|p|^2$ and by considering the restriction of
the isomorphism $\varphi$ to the orbits of action close to zero.
The twisted version is proved analogously.

\begin{theorem}\label{TheoremMainTheorem}
Let $N^n$ be a closed manifold and let $L \to DT^*N$ be an exact
Lagrangian embedding. Then for all $\alpha \in H^1(\mathcal{L}N)$
there exists a commutative diagram
$$
\xymatrix{
H_{*}(\mathcal{L}L;\underline{\Lambda}_{(\mathcal{L}p)^*\alpha})
\ar@{<-}^{c_*}@<1ex>[d] \ar@{<-}[r]^-{\mathcal{L}p_!} &
H_*(\mathcal{L}N;\underline{\Lambda}_{\alpha}) \ar@{<-}^{c_*}@<1ex>[d] \\
H_*(L;c^*\underline{\Lambda}_{(\mathcal{L}p)^*\alpha})
\ar@{<-}[r]^-{p_!} & H_*(N;c^*\underline{\Lambda}_{\alpha}) }
$$
where $c \co N \to \mathcal{L}N$ is the inclusion of constant
loops, $p \co  L \to T^*N \to N$ is the projection and $p_!$ is
the ordinary transfer map. Moreover, the diagram can be restricted
to the components $\mathcal{L}_0 L$ and $\mathcal{L}_0 N$ of
contractible loops.

If $c^*\alpha=0$ then the bottom map becomes $p_! \otimes 1 \co
H_*(L) \otimes \Lambda \leftarrow H_*(N) \otimes \Lambda$.
\end{theorem}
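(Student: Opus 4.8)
The plan is to deduce the diagram by transporting the twisted Viterbo functoriality square of Theorem~\ref{TheoremViterboTwistedFunctoriality} through the twisted Abbondandolo--Schwarz isomorphism of Theorem~\ref{TheoremAbbondandoloSchwarzTwisted} and Poincar\'e duality on the closed manifolds $N$ and $L$, identifying the two vertical maps by means of Lemma~\ref{LemmaStandardInclusion}. First I would extend the exact Lagrangian embedding $j\co L\hookrightarrow DT^*N$ to a Liouville embedding $i\co(DT^*L,\theta')\hookrightarrow(DT^*N,\theta)$ as in Section~\ref{SectionViterboFunctoriality}, and apply Theorem~\ref{TheoremViterboTwistedFunctoriality} to $i$ with the pulled-back class $(\mathcal{L}\pi)^*\alpha\in H^1(\mathcal{L}DT^*N)$, where $\pi\co DT^*N\to N$. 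This produces a commutative square whose horizontal maps are $SH^*(i)$ and $i^*$, whose vertical maps are the symplectic $c_*$, and whose twisted coefficients are $\underline{\Lambda}_{(\mathcal{L}\pi)^*\alpha}$ and $\underline{\Lambda}_{(\mathcal{L}i)^*(\mathcal{L}\pi)^*\alpha}$ on $\mathcal{L}DT^*N$ and $\mathcal{L}DT^*L$ respectively.

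The next step, which I expect to be the main obstacle, is the careful bookkeeping of the local systems. Since $DT^*L$ deformation retracts onto its zero section and $i$ carries the zero section to $j(L)$, the square $\pi\circ i\simeq p\circ\pi_L$ (with $\pi_L\co DT^*L\to L$) commutes up to homotopy, so $\mathcal{L}\pi\circ\mathcal{L}i\simeq\mathcal{L}p\circ\mathcal{L}\pi_L$ and hence $(\mathcal{L}i)^*(\mathcal{L}\pi)^*\alpha=(\mathcal{L}\pi_L)^*(\mathcal{L}p)^*\alpha$; combining this with $\mathcal{L}\pi\circ c=c\circ\pi$ and the homotopy equivalences $DT^*N\simeq N$, $DT^*L\simeq L$, the restricted systems become $c^*\alpha$ on $N$ and $p^*c^*\alpha=c^*(\mathcal{L}p)^*\alpha$ on $L$. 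With these matchings in hand I would feed the square through the twisted Abbondandolo--Schwarz isomorphism of Theorem~\ref{TheoremAbbondandoloSchwarzTwisted} applied to $N$ and to $L$: the symplectic cohomologies become $H_{n-*}(\mathcal{L}N;\underline{\Lambda}_\alpha)$ and $H_{n-*}(\mathcal{L}L;\underline{\Lambda}_{(\mathcal{L}p)^*\alpha})$, so the top map $SH^*(i)$ becomes a degree-preserving map which we adopt as the definition of the twisted loop transfer $\mathcal{L}p_!$ (in the untwisted case it recovers Viterbo's transfer \cite{Viterbo3}, which is constructed in exactly this fashion). For the bottom map I would use the same homotopy-commuting square together with $H^*(DT^*N)\cong H^*(N)$ and $H^*(DT^*L)\cong H^*(L)$ to identify $i^*$ with $p^*\co H^{n-*}(N)\to H^{n-*}(L)$, and then precompose and postcompose with Poincar\'e duality on $N$ and $L$ to turn it into the ordinary transfer $p_!$.

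Finally, Lemma~\ref{LemmaStandardInclusion}, applied to $N$ with the class $\alpha$ and to $L$ with the class $(\mathcal{L}p)^*\alpha$ (taking the zero section $L\hookrightarrow DT^*L$ as the relevant exact Lagrangian), identifies the symplectic $c_*$ composed with Poincar\'e duality and $\varphi^{-1}$ with the ordinary map induced by the inclusion of constant loops, which settles the two vertical edges. Concatenating these isomorphisms corner by corner turns the commutative Viterbo square into the asserted diagram. The restriction to the contractible components $\mathcal{L}_0 L$, $\mathcal{L}_0 N$ follows since the Abbondandolo--Schwarz isomorphism respects the splitting of the loop space by free homotopy class (Theorem~\ref{TheoremAbbondandoloSchwarzTwisted}) and Floer trajectories and continuation maps preserve the free homotopy class of a $1$-periodic orbit, so that $SH^*(i)$ restricts accordingly, while $c_*$ and $p_!$ visibly concern the trivial class. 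For the last assertion, if $c^*\alpha=0$ then $c^*\underline{\Lambda}_\alpha$ on $N$ and $c^*(\mathcal{L}p)^*\underline{\Lambda}_\alpha=\underline{\Lambda}_{p^*c^*\alpha}$ on $L$ are trivial bundles; by flatness of $\Lambda=\Z((t))$ over $\Z$ the two bottom groups are $H_*(N)\otimes\Lambda$ and $H_*(L)\otimes\Lambda$, and since $p_!$ is assembled from $p^*$ and cap products with fundamental classes, all $\Z$-linear, it base-changes to $p_!\otimes 1$.
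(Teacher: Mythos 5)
Your proposal is correct and follows essentially the same route as the paper's own proof: extend $j$ to a Liouville embedding $i\co DT^*L\hookrightarrow DT^*N$ via Weinstein's theorem, apply the twisted Viterbo functoriality square of Theorem~\ref{TheoremViterboTwistedFunctoriality} to $i$ with the class $(\mathcal{L}\pi)^*\alpha$, conjugate by the twisted Abbondandolo--Schwarz isomorphisms $\varphi_N,\varphi_L$ and by Poincar\'e duality so that $SH^*(i)$ and $i^*$ become $\mathcal{L}p_!$ and $p_!$, and invoke Lemma~\ref{LemmaStandardInclusion} to identify the vertical maps with the ordinary $c_*$. The only cosmetic difference is that you spell out the bookkeeping of the local systems via $\pi\circ i\simeq p\circ\pi_L$ (which the paper leaves implicit) and you add a short justification of the final base-change statement; note also that on the point of restricting to contractible components, the restricted $SH^*(i)$ does not land directly in the $\mathcal{L}_0 L$-part but in the subcomplex generated by orbits contractible in $T^*N$, so one still needs the projection to $\mathcal{L}_0 L$ as the paper indicates, though your observation that $c_*$ and $p_!$ live in the trivial class supplies the reason this projection preserves commutativity.
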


\begin{proof}
Let $\theta_N$ be the canonical 1-form which makes $(DT^*N , d
\theta_N)$ symplectic. By Weinstein's theorem a neighbourhood of
$L$ is symplectomorphic to a small disc cotangent bundle $DT^*L$.
Therefore the exact Lagrangian embedding $j \co L^n
\hookrightarrow DT^*N$ yields a Liouville embedding $i \co
(DT^*L,\theta_L) \hookrightarrow (DT^*N, \theta_N)$.

By Theorem \ref{TheoremAbbondandoloSchwarzTwisted} there are
twisted isomorphisms
$$
\begin{array}{ll}
\varphi_N \co  H_*(\mathcal{L}N;\underline{\Lambda}_{\alpha}) \to
SH^{n-*}(DT^*N;\underline{\Lambda}_{(\mathcal{L}\pi)^*\alpha})\\
\varphi_L \co
H_*(\mathcal{L}L;\underline{\Lambda}_{(\mathcal{L}p)^*\alpha}) \to
SH^{n-*}(DT^*L;\underline{\Lambda}_{(\mathcal{L}i)^*(\mathcal{L}\pi)^*\alpha})
\end{array}
$$
We define $\mathcal{L}p_{!} = \varphi_L^{-1} \circ SH^*(i) \circ
\varphi_N$ so that the following diagram commutes
$$
\xymatrix{
H_{*}(\mathcal{L}L;\underline{\Lambda}_{(\mathcal{L}p)^*\alpha})
\ar@{<-}_{\varphi_L^{-1}}^{\wr}@<1ex>[d]
\ar@{<-}[r]^-{\mathcal{L}p_!} &
H_*(\mathcal{L}N;\underline{\Lambda}_{\alpha}) \ar@{->}_{\varphi_N}^{\wr}@<1ex>[d] \\
SH^{n-*}(DT^*L;\underline{\Lambda}_{(\mathcal{L}i)^*(\mathcal{L}\pi)^*\alpha})
\ar@{<-}[r]^-{SH^*(i)} &
SH^{n-*}(DT^*N;\underline{\Lambda}_{(\mathcal{L}\pi)^*\alpha}) }
$$
Recall that the ordinary transfer map $p_!$ is defined using
Poincar\'e duality and the pullback $p^*$ so that the following
diagram commutes,
$$
\xymatrix{
H^{n-*}(L;c^*\underline{\Lambda}_{(\mathcal{L}p)^*\alpha})
\ar@{<-}^{\wr}@<1ex>[d] \ar@{<-}[r]^-{p^*} &
H^{n-*}(N;c^*\underline{\Lambda}_{\alpha}) \ar@{->}^{\wr}@<1ex>[d] \\
H_*(L;c^*\underline{\Lambda}_{(\mathcal{L}p)^*\alpha})
\ar@{<-}[r]^-{p_!} & H_*(N;c^*\underline{\Lambda}_{\alpha}) }
$$
Finally, Theorem \ref{TheoremViterboTwistedFunctoriality} for the
map $i$ yields another commutative diagram whose horizontal maps
are the bottom and top rows respectively of the above two diagrams
(in the second diagram we use that $L,N$ are homotopy equivalent
to $DT^*L$, $DT^*N$). By combining these diagrams we obtain a
commutative diagram
$$
\xymatrix{
H_{*}(\mathcal{L}L;\underline{\Lambda}_{(\mathcal{L}p)^*\alpha})
\ar@{<-}@<1ex>[d] \ar@{<-}[r]^-{\mathcal{L}p_!} &
H_*(\mathcal{L}N;\underline{\Lambda}_{\alpha}) \ar@{<-}@<1ex>[d] \\
H_*(L;c^*\underline{\Lambda}_{(\mathcal{L}p)^*\alpha})
\ar@{<-}[r]^-{p_!} & H_*(N;c^*\underline{\Lambda}_{\alpha}) }
$$
Lemma \ref{LemmaStandardInclusion} shows that the vertical maps
are indeed the maps $c_*$ in ordinary homology. Since $c \co  N
\to \mathcal{L}N$ maps into the component of contractible loops
$\mathcal{L}_0 N$, the diagram restricts to $\mathcal{L}_0 L$ and
$\mathcal{L}_0 N$ by restricting $\mathcal{L}p_{!}$ and projecting
to
$H_{*}(\mathcal{L}L;\underline{\Lambda}_{(\mathcal{L}p)^*\alpha})$
(not all loops in $T^*L$ that are contractible in $T^*N$ need be
contractible in $T^*L$).
\end{proof}


\section{Proof of the Corollary}\label{SectionProofMainCorollary}
%
\subsection{Transgressions}\label{SubsectionTransgressions}
Given $\beta \in H^2(N)$, let $f \co N \to \CP$ be a classifying
map for $\beta$. Let $ev \co  \LN \times S^1 \to N$ be the
evaluation map. Define
$$\xymatrix{
\tau=\pi\circ ev^* \co  H^2(N)\ar[r]^-{ev^*} & H^2(\LN \times
S^1)\ar[r]^-{\pi} & H^1(\LN) },
$$
where $\pi$ is the projection to the K\"{u}nneth summand. If $N$
is simply connected, then $\tau$ is an isomorphism. Let $u$ be a
generator of $H^2(\CP)$, then $v=\tau(u)$ generates
$H^1(\mathcal{L}\CP) \cong H^1(\Omega\CP)$ and
$\tau(\beta)=(\mathcal{L}\!f)^*v$. Identify $H^1(\LN)\cong
\textnormal{Hom}(\pi_1(\LN),\Z)$ and $\pi_1(\LN) \cong
\pi_2(N)\rtimes \pi_1(N)$, then the class $\tau(\beta)$ vanishes
on $\pi_1(N)$ and corresponds to
$$
f_* \co  \pi_2(N) \to \pi_2(\CP)\cong \Z.
$$
Similarly, define $\tau_b \co  H^2(N)\to H^1(\Omega_0 N)$ for the
space $\Omega_0 N$ of contractible based loops. Then $\Omega f \co
\Omega_0 N \to \Omega \CP$ is a classifying map for
$\tau_b(\beta)$. The inclusion $\Omega_0 N \to \LN$ induces a
bijection $\tau(\beta) \mapsto \tau_b(\beta)$ between transgressed
forms.

We will assume throughout that the transgression
$\alpha=\tau(\beta) \in H^1(\LN)$ is nonzero, or equivalently that
$f_* \co  \pi_2 (N) \to \Z$ is not the zero map.
%
\subsection{Novikov homology of the free loopspace}\label{SubsectionNovikovHomology}
Denote by $\overline{\LN}$ the infinite cyclic cover of $\LN$
corresponding to $\alpha \co  \pi_1 (\LN) \to \Z$, and let $t$
denote a generator of the group of deck transformations of
$\overline{\LN}$. The group ring of the cover is $R=\Z[t,t^{-1}]$,
and $\Lambda = \Z((t))=\Z[[t]][t^{-1}]$ is the Novikov ring of
$\alpha$ (see \ref{SubsectionNovikovbundles}).

The Novikov homology of $\LN$ with respect to $\alpha$ is defined
as the homology of $\LN$ with local coefficients in the bundle
$\underline{\Lambda}_{\alpha}$, which by \cite{Whitehead} can be
calculated as
$$
H_*(\LN;\underline{\Lambda}_{\alpha}) \cong
H_*(C_*(\overline{\LN})\otimes_R \Lambda).
$$

Say that a space $X$ is of \emph{finite type} if $H_k(X)$ is
finitely generated for each $k$.
\begin{theorem}\label{TheoremCyclicCoverFG}
For a compact manifold $N$, if $\tau(\beta)\neq 0$ and $\pi_m(N)$
is finitely generated for each $m\geq 2$ then $\overline{\LN}$ is
of finite type.
\end{theorem}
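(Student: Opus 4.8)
The plan is to compare the cyclic cover $\overline{\LN}$ with the universal cover $\widetilde N$ of $N$ by means of fibrations, reducing the finite-type statement for the loopspace to finite-type statements for $N$ and for an iterated loopspace. Recall first that evaluation at the basepoint gives a fibration $\Omega_0 N \to \LN \to N$, and a transgressed class $\alpha = \tau(\beta)$ vanishes on $\pi_1(N) \subset \pi_1(\LN)$, so the cyclic cover $\overline{\LN}$ sits in a pullback fibration $\overline{\Omega_0 N} \to \overline{\LN} \to N$, where $\overline{\Omega_0 N}$ is the cyclic cover of $\Omega_0 N$ corresponding to $\tau_b(\beta)\colon \pi_1(\Omega_0 N)\to\Z$. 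Since $N$ is a compact manifold it has the homotopy type of a finite CW complex, hence is of finite type; so by the Serre spectral sequence of this fibration (with $\Z$ coefficients) it suffices to prove that $\overline{\Omega_0 N}$ is of finite type, and moreover that the local coefficient systems $H_k(\overline{\Omega_0 N})$ on $N$ are finitely generated in each degree — which follows once $\overline{\Omega_0 N}$ itself is of finite type, since $\pi_1(N)$ acts and the groups are already f.g. as abelian groups.

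Next I would pass to based loops on the universal cover. The covering $\widetilde N \to N$ induces $\Omega_0 N \simeq \Omega \widetilde N$ on the contractible component, and under this identification $\tau_b(\beta)$ corresponds to the transgression of the pulled-back class $\widetilde\beta \in H^2(\widetilde N)$; the hypothesis $\tau(\beta)\ne 0$ says exactly that $\widetilde\beta \ne 0$ in $H^2(\widetilde N)\cong \mathrm{Hom}(\pi_2(N),\Z)$, i.e. that $f_*\colon\pi_2(N)\to\Z$ is nonzero. So $\overline{\Omega_0 N}$ is the cyclic cover of $\Omega\widetilde N$ classified by the composite $\pi_1(\Omega\widetilde N) = \pi_2(\widetilde N) = \pi_2(N) \xrightarrow{f_*} \Z$, and we are reduced to: \emph{if $\widetilde N$ is simply connected with finitely generated $\pi_m$ for all $m\ge 2$, and $\omega\colon\pi_2(\widetilde N)\to\Z$ is a nonzero homomorphism, then the cyclic cover of $\Omega\widetilde N$ determined by $\omega$ is of finite type.} Here I would use that a simply connected space with finitely generated homotopy groups is itself of finite type (a standard consequence of the Serre spectral sequence applied to the Postnikov tower, or to the path–loop fibration inductively), and likewise that $\Omega\widetilde N$ is then of finite type.

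For the final reduction I would model the cyclic cover as a homotopy fiber. Let $g\colon \widetilde N \to \CP$ be a classifying map for $\widetilde\beta$ (i.e. $g_* = \omega$ on $\pi_2$). Then $\Omega g\colon \Omega\widetilde N \to \Omega\CP \simeq S^1$ classifies the transgressed class, and the cyclic cover $\overline{\Omega_0 N}$ is the homotopy fiber of the composite $\Omega\widetilde N \xrightarrow{\Omega g} S^1 \xrightarrow{} S^1$ lifted to the universal cover $\R$ of the target — equivalently, the homotopy fiber of $\Omega\widetilde N \to \Omega\CP$ followed by the loop of $\CP \to \CP$, but cleanly: $\overline{\Omega_0 N}$ is the homotopy fiber of $\Omega g' \colon \Omega\widetilde N \to \Omega \CP$ where we have replaced $\CP$ by its $\Z$-cover, which is $S^\infty \simeq *$; more simply, $\overline{\Omega_0 N} \simeq \mathrm{hofib}(\Omega\widetilde N \xrightarrow{\Omega g} \Omega\CP \simeq S^1)$. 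Since $S^1$ is of finite type and $\Omega\widetilde N$ is of finite type, the Serre spectral sequence of the fibration $\overline{\Omega_0 N} \to \Omega\widetilde N \to S^1$ shows inductively in degree that $H_*(\overline{\Omega_0 N})$ is finitely generated, provided the $\pi_1(S^1)=\Z$-action on the homology of the fiber is understood — and nonzeroness of $\omega$ guarantees $\Omega g$ is nontrivial on $\pi_1$, so the base is genuinely $S^1$ rather than a point and the fiber is the correct cyclic cover. I expect the main obstacle to be this last point: controlling the $\Z$-action of the deck group on $H_*(\overline{\Omega_0 N})$ well enough to run the spectral sequence (or an equivalent Wang exact sequence $\cdots \to H_k(\overline{\Omega_0 N}) \xrightarrow{t-1} H_k(\overline{\Omega_0 N}) \to H_k(\Omega\widetilde N) \to \cdots$), since a priori $t-1$ need not be surjective and one must argue degreewise that each $H_k$ is finitely generated over $\Z$, not merely over $\Z[t,t^{-1}]$. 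The Wang sequence is in fact the cleanest route: it is exact, $H_k(\Omega\widetilde N)$ is f.g. for all $k$, and one shows by induction that $\ker(t-1)$ on $H_k$ is f.g., hence $H_k(\overline{\Omega_0 N})$ is an extension of a f.g. group by a f.g. group.
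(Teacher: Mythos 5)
Your first two reductions match the paper's: the Serre spectral sequence of the fibration $\overline{\Omega_0 N}\to\overline{\LN}\to N$ reduces the claim to finite type of $\overline{\Omega_0 N}$, and passing to the universal cover identifies $\Omega_0 N$ with $\Omega\widetilde N$, hence $\overline{\Omega_0 N}$ with the homotopy fibre of $\Omega g\colon\Omega\widetilde N\to\Omega\CP\simeq S^1$, where $g$ classifies the pulled-back class in $H^2(\widetilde N)$.

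The final step, however, has a genuine gap. The Wang exact sequence
$$\cdots\to H_{k+1}(\Omega\widetilde N)\to H_k(\overline{\Omega_0 N})\xrightarrow{\;t-1\;}H_k(\overline{\Omega_0 N})\to H_k(\Omega\widetilde N)\to\cdots$$
tells you only that $\ker(t-1)$ on $H_k(\overline{\Omega_0 N})$ is a quotient of $H_{k+1}(\Omega\widetilde N)$, hence finitely generated, and that $\operatorname{coker}(t-1)$ injects into $H_k(\Omega\widetilde N)$, hence is finitely generated. This does \emph{not} exhibit $H_k(\overline{\Omega_0 N})$ as an extension of one finitely generated group by another: the quotient of $H_k$ by the subgroup $\ker(t-1)$ is $\operatorname{im}(t-1)$, which is a (possibly large) subgroup of $H_k$, and is unrelated to $\operatorname{coker}(t-1)$. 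A concrete counterexample: let $A=\bigoplus_{n\in\Z}\Z$ with $t$ acting by the shift; then $\ker(t-1)=0$ and $\operatorname{coker}(t-1)\cong\Z$, both finitely generated, yet $A$ is not. One can even realize this homologically: take $F=\bigvee_{\Z}S^m$ with $t$ the shift, and $E$ its mapping torus fibring over $S^1$; then $E$ is of finite type but $F$ is not. So finite type of $\Omega\widetilde N$ alone cannot close the argument via the Wang sequence.

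What actually closes the gap — and is the route the paper takes — is to use the homotopy-fibre identification you already wrote down, rather than immediately discarding it for the spectral sequence. The homotopy fibre of $\Omega g$ is $\Omega F_g$, where $F_g$ is the homotopy fibre of $g\colon\widetilde N\to\CP$. From the long exact sequence of that fibration, $\pi_1 F_g\cong\operatorname{Coker}(\omega\colon\pi_2 N\to\Z)$, which is a \emph{finite} cyclic group precisely because $\omega\neq 0$. Hence $\Omega F_g$ has finitely many path components, each homotopy equivalent to $\Omega_0 F_g=\Omega\widetilde{F_g}$; and $\widetilde{F_g}$ is simply connected with finitely generated homotopy groups (its $\pi_m$ agree with $\pi_m N$ up to a finitely generated correction from $\pi_2\CP=\Z$), so it is of finite type, so $\Omega\widetilde{F_g}$ is of finite type. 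Finite type of $\overline{\Omega_0 N}$ is then immediate from finiteness of the number of components — a structural fact that the Wang sequence, which only sees $\ker$ and $\operatorname{coker}$ of $t-1$, cannot recover.
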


\begin{proof}
\textbf{Claim 1.} \emph{If $\overline{\Omega_0 N}$ is of finite
type then so is
$\overline{\LN}$.}\\
Proof. Consider the fibration $\Omega_0 N \to \LN \to N$, and take
cyclic covers corresponding to $\tau_b(\beta)$ and $\tau(\beta)$
to obtain the fibration $\overline{\Omega_0 N} \to \overline{\LN}
\to N$. By compactness, $N$ is homotopy equivalent to a finite CW
complex and Claim 1 follows by a Leray-Serre spectral sequence
argument.\\

After replacing $N$ by a homotopy equivalent space, we may assume
that we have a fibration $f \co N \to \CP$ with fibre
$F=f^{-1}(*)$, and taking the spaces of contractible based loops
gives a fibration $\Omega f \co
\Omega_0 N \to \Omega \CP$.\\

\textbf{Claim 2.} \emph{The fibre of $\Omega f$ is a union
$(\Omega F)_K$ of finitely many components of $\Omega F$, indexed
by the finite set
$K=\textnormal{Coker}(f_* \co \pi_2 N \to \pi_2 \CP)$.}\\
Proof. Consider the homotopy LES for the fibration $f$,
$$
\xymatrix{
\pi_2 N \ar@{->}[r]^-{f_*}& \pi_2 \CP \ar@{->}[r]& \pi_1 F
\ar@{->}[r]& \pi_1 N
}
$$
then $(\Omega f)^{-1}(*)= \Omega F \cap \Omega_0 N$ consists of
loops $\gamma\in\Omega F$ whose path component lies in the kernel
of $\pi_1 F\to\pi_1 N$, which is isomorphic to the cokernel of
$f_*$. Since $\tau(\beta)\neq 0$, also $f_*$ is nonzero and so $K$
is finite.
\\

\textbf{Claim 3.} \emph{$\overline{\Omega j} \co (\Omega F)_K \to
\overline{\Omega_0 N}$
is a homotopy equivalence.}\\
Proof. Observe that $\overline{\Omega_0 N}$ is the pull-back under
$\Omega f$ of the cyclic cover of $\Omega\CP$ corresponding to the
transgression $v=\tau_b(u)\in H^1(\Omega\CP)$ of a generator $u\in
H^2(\CP)$ (see \ref{SubsectionTransgressions}). We obtain the
commutative diagram
$$
\xymatrix{
(\Omega F)_K \ar@{->}[d]\ar@{->}[r]^-{\overline{\Omega j}} &
\overline{\Omega_0 N} \ar@{->}[d]\ar@{->}[r]^{\overline{\Omega f}}
&
\overline{\Omega \CP}\ar@{->}[d]\ar@{->}[r]^-{\overline{\varphi}}_-{\simeq} & \R\ar@{->}[d]\\
(\Omega F)_K \ar@{->}[r]^-{\Omega j} & \Omega_0 N
\ar@{->}[r]^-{\Omega f} & \Omega\CP
\ar@{->}[r]^-{\varphi}_-{\simeq} & S^1}
$$
where the homotopy equivalence $\varphi$ corresponds to $\tau_b(u)
\in H^1(\Omega\CP)\cong [\Omega\CP,S^1]$. The claim follows since
$\R$ is contractible.\\

\textbf{Claim 4.} \emph{$\overline{\Omega_0 N}$ is of finite type
iff
$\Omega_0 F=\Omega \widetilde{F}$ is of finite type.}\\
Proof. Each component of $\Omega F$ is homotopy equivalent to
$\Omega_0 F$ via composition with an appropriate fixed loop. The
claim follows from Claims 3 and 2 since $K$ is finite. Note that
we may identify $\Omega_0 F=\Omega \widetilde{F}$ since the loops
of $F$ that lift to closed loops of the universal cover
$\widetilde{F}$ are precisely the contractible ones.\\

\textbf{Claim 5.} \emph{$\Omega \widetilde{F}$ is of finite type
iff $\pi_m N$
is finitely generated for each $m\geq 2$.}\\
Proof. Since $\widetilde{F}$ is simply connected,
$\Omega\widetilde{F}$ is of finite type iff $\widetilde{F}$ is of
finite type, by a Leray-Serre spectral sequence argument applied
to the path-space fibration $\Omega\widetilde{F}\to
P\widetilde{F}\to\widetilde{F}$ (see \cite[9.6.13]{Spanier}).
Moreover $\widetilde{F}$ is of finite type iff
$\pi_m(\widetilde{F})=\pi_m(F)$ is finitely generated for all
$m\geq 2$ (see \cite[9.6.16]{Spanier}). The claim follows from the
homotopy LES for $F \to N \to \CP$.
\end{proof}

\begin{corollary}\label{CorollaryVanishingTheorem}
For a compact manifold $N$, if $\tau(\beta)\neq 0$ and $\pi_m(N)$
is finitely generated for each $m\geq 2$, then
$H_*(\LN;\underline{\Lambda}_{\tau(\beta)})=0$.
\end{corollary}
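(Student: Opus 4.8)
The plan is to combine Theorem \ref{TheoremCyclicCoverFG} with a standard Nakayama-type argument over the Novikov ring, exactly as sketched in the introduction. First I would recall from \ref{SubsectionNovikovHomology} that the Novikov homology can be computed as the homology of the chain complex $C_*(\overline{\LN})\otimes_R \Lambda$, where $R=\Z[t,t^{-1}]$ and $\Lambda=\Z((t))$. Since $\Lambda$ is flat over $R$ (it is a localization-completion; concretely $\Z[[t]]$ is flat over $\Z[t]$ by standard commutative algebra, and inverting $t$ preserves flatness), the natural map
$$
H_*(C_*(\overline{\LN}))\otimes_R \Lambda \longrightarrow H_*\!\left(C_*(\overline{\LN})\otimes_R \Lambda\right)
$$
is an isomorphism. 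Hence it suffices to show $H_k(\overline{\LN})\otimes_R \Lambda = 0$ for every $k$.

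The next step is to invoke Theorem \ref{TheoremCyclicCoverFG}: under the hypotheses $\tau(\beta)\neq 0$ and $\pi_m(N)$ finitely generated for each $m\geq 2$, the cyclic cover $\overline{\LN}$ is of finite type, so each $H_k(\overline{\LN})$ is a finitely generated $R$-module. It then remains to prove the purely algebraic fact that for any finitely generated $R$-module $P$ one has $P\otimes_R \Lambda = 0$. Here is where I would apply Nakayama's lemma. Consider the maximal ideal $\mathfrak{m}=(t)\subset \Z[[t]]$ is not quite the right framework since $\Lambda$ is not local in the naive sense; instead I would argue as follows. Write $P$ as a quotient of a finitely generated free module and use that a finitely generated $R$-module has the form $P\cong R^a\oplus(\text{torsion})$ after tensoring with $\Q$ is not available over $\Z$; rather, I would use that $R$ is Noetherian and argue via the structure of $P\otimes_R\Lambda$ as a finitely generated $\Lambda$-module equipped with the endomorphism "multiplication by $t$", which is invertible (since $t$ is a unit in $\Lambda$), and the observation that $t$ acts topologically nilpotently on $\Z[[t]]\otimes_{\Z[t]}P/(\text{something})$. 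The cleanest route: since $P$ is finitely generated over $R=\Z[t,t^{-1}]$, the module $Q=P\otimes_R\Lambda$ is finitely generated over $\Lambda$. But $\Lambda$ is a local ring is false; however $\Z[[t]]$ \emph{is} local with maximal ideal $(t,\text{primes})$ — this is getting complicated, so instead I would simply observe: $P$ finitely generated over $R$ means $P\otimes_R \Z[[t]]$ is finitely generated over $\Z[[t]]$, and $\bigcap_n t^n (P\otimes \Z[[t]])=0$ by the Artin–Rees lemma applied to the Noetherian ring $\Z[[t]]$ is not automatic either — the correct statement is that $P\otimes_R \Lambda \cong (P\otimes_R \Z[[t]])[t^{-1}]$, and one shows this vanishes by noting that a finitely generated $\Z[[t]]$-module on which $t$ becomes invertible after inverting $t$ must have already had $t$ acting invertibly, contradiction with $t$-adic completeness unless the module was zero to begin with.

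The main obstacle, therefore, is pinning down this last algebraic step cleanly. The honest argument is: if $Q=P\otimes_R\Z[[t]]$ is a finitely generated $\Z[[t]]$-module, then by the $t$-adic Nakayama lemma, $Q=tQ$ implies $Q=0$; in general $Q/tQ$ is a finitely generated $\Z$-module and $Q$ is $t$-adically separated, so $Q$ injects into its completion $\varprojlim Q/t^nQ$. Now $P\otimes_R\Lambda = Q[t^{-1}]$; but multiplication by $t$ on $Q$ — here one uses that $t-1$, or rather the relation $t^N = (\text{polynomial of lower degree})$ coming from finite generation over $R$ where $t^{-1}$ is already present — forces that some power $t^m$ kills the "tail", making $Q[t^{-1}]=0$. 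I would therefore phrase the key lemma as: \emph{a finitely generated module over $R=\Z[t,t^{-1}]$ becomes zero after $\otimes_R\Lambda$}, and prove it by picking generators, writing the finitely many defining relations, and observing that in $\Lambda$ one can solve for the generators in terms of each other with coefficients in $t\Z[[t]]$, so iterating and passing to the $t$-adic limit shows every generator is zero. This is the one spot requiring genuine care; everything else (flatness, universal coefficients, the appeal to Theorem \ref{TheoremCyclicCoverFG}) is formal.
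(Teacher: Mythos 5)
There is a genuine gap here, and it is the one you yourself flagged as ``the one spot requiring genuine care.'' The algebraic lemma you propose to prove --- ``for any finitely generated $R$-module $P$ one has $P\otimes_R\Lambda=0$'' --- is false. The simplest counterexample is $P=R$ itself, which is certainly finitely generated over $R$, yet $R\otimes_R\Lambda=\Lambda\neq 0$. A torsion example: $P=R/(t-2)$ is a cyclic $R$-module, but $t-2$ is not a unit in $\Z((t))$ (its formal inverse has coefficients in $\Z[\tfrac12]$), so $P\otimes_R\Lambda=\Lambda/(t-2)\Lambda\neq 0$. Your proposed ``iterate and pass to the $t$-adic limit'' argument therefore cannot work; it is attempting to prove a false statement.

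Where things went wrong: you correctly invoke Theorem \ref{TheoremCyclicCoverFG}, which asserts that $\overline{\LN}$ is of finite type, meaning each $H_k(\overline{\LN})$ is finitely generated \emph{as an abelian group}, i.e.\ over $\Z$. You then weaken this to ``finitely generated as an $R$-module.'' That weakening discards exactly the hypothesis that powers the argument. Finite generation over $\Z$ is strictly stronger: it is what lets you conclude that $Q:=H_k(\overline{\LN})\otimes_{\Z[t]}\Z[[t]]$ is finitely generated over $\Z[[t]]$, which is what Nakayama needs. (With only finite generation over $R$, the module $Q$ can be enormous --- e.g.\ $\Z((t))$ itself when $P=R$ --- and Nakayama does not apply.) A secondary slip: you write $P\otimes_R\Z[[t]]$, which is not defined, since $\Z[[t]]$ is not an $R$-algebra ($t^{-1}\notin\Z[[t]]$); the tensor must be taken over $\Z[t]$.

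The repaired argument, which is essentially the paper's: $H_k(\overline{\LN})$ is finitely generated over $\Z$, hence over $\Z[t]$, so $Q=H_k(\overline{\LN})\otimes_{\Z[t]}\Z[[t]]$ is a finitely generated $\Z[[t]]$-module. The deck transformation $t$ acts as an automorphism of $H_k(\overline{\LN})$, so $tQ=Q$. Since $1+tf$ is a unit in $\Z[[t]]$ for every $f\in\Z[[t]]$, the element $t$ lies in the Jacobson radical of $\Z[[t]]$, and Nakayama's lemma gives $Q=0$. Finally, by flatness (which you set up correctly), $H_k(\LN;\underline{\Lambda}_{\tau(\beta)})\cong H_k(\overline{\LN})\otimes_R\Lambda$, and this is the localization $S^{-1}Q$ at $S=\{1,t,t^2,\dots\}$, hence zero. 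The digressions in your last paragraph about $t$-adic separatedness and ``relations coming from finite generation over $R$ forcing $t^m$ to kill the tail'' are not needed once the correct hypothesis is in place, and would not salvage the argument without it.
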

\begin{proof}
We need to show that each $HN_k =
H_k(\LN;\underline{\Lambda}_{\tau(\beta)})$ vanishes. Since
$\Z[t]$ is Noetherian, its $(t)-$adic completion $\Z[[t]]$ is flat
over $\Z[t]$ (see \cite[Theorem 8.8] {Matsumura}). Therefore,
localizing at the multiplicative set $S$ generated by $t$,
$\Lambda = S^{-1}\Z[[t]]$ is flat over $R=S^{-1}\Z[t]$. Thus $HN_k
\cong H_k(\overline{\LN})\otimes_R \Lambda$, which is the
localization of $H_k=H_k(\overline{\LN})\otimes_{\Z[t]} \Z[[t]]$.
Observe that $t\cdot H_k = H_k$ since $t$ acts invertibly on
$H_k(\overline{\LN})$. So if $H_k$ were finitely generated over
$\Z[t]$, then $H_k=0$ by Nakayama's lemma \cite[Theorem 2.2]
{Matsumura} since $t$ lies in the radical of $\Z[[t]]$. By Theorem
\ref{TheoremCyclicCoverFG}, $H_k$ is in fact finitely generated
over $\Z$, so this concludes the proof.
\end{proof}

\begin{remark}
The idea behind the proof of Corollary
\ref{CorollaryVanishingTheorem} is not original. I later realized
that it is a classical result that if $H_*(X;\Z)$ is finitely
generated in each degree then the Novikov homology
$H_*(C_*(\overline{X})\otimes_R \Lambda_{\alpha})$ vanishes for
$0\neq \alpha \in H^1(X)$. The basic idea dates back to
\cite{Milnor} and a very general version of this result is proved
in \cite[Prop. 1.35]{Farber}.
\end{remark}

\begin{corollary}\label{Corollary2VanishingTheorem}
If $N$ is a compact simply connected manifold, then
$H_*(\LN;\underline{\Lambda}_{\alpha})=0$ for any nonzero
$\alpha\in H^1(\LN)$.
\end{corollary}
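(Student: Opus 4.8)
The plan is to reduce the statement to Corollary \ref{CorollaryVanishingTheorem}. Two things need to be checked: that the given class $\alpha$ is a transgression $\tau(\beta)$ of a \emph{nonzero} $\beta \in H^2(N)$, and that $\pi_m(N)$ is finitely generated for each $m \geq 2$. Once both hold, the conclusion is immediate.

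First I would record that, since $N$ is simply connected, $\LN$ is connected and $\pi_1(\LN) \cong \pi_2(N)$, so $H^1(\LN) \cong \mathrm{Hom}(\pi_2(N),\Z)$. By the discussion of \ref{SubsectionTransgressions}, the transgression $\tau \co H^2(N) \to H^1(\LN)$ is then an isomorphism. Hence any nonzero $\alpha \in H^1(\LN)$ can be written as $\alpha = \tau(\beta)$ for a (unique, nonzero) class $\beta \in H^2(N)$.

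Next I would invoke Serre's finiteness theorem: a simply connected space all of whose integral homology groups are finitely generated has finitely generated homotopy groups in every degree. A compact manifold has the homotopy type of a finite CW complex, so $H_k(N)$ is finitely generated for each $k$; therefore $\pi_m(N)$ is finitely generated for all $m \geq 2$. (Alternatively, this is precisely the content of Claim~5 in the proof of Theorem \ref{TheoremCyclicCoverFG}, run together with the homotopy long exact sequence, so one could simply cite that.)

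With these two facts, Corollary \ref{CorollaryVanishingTheorem} applies verbatim to $\beta$ and yields $H_*(\LN;\underline{\Lambda}_{\alpha}) = H_*(\LN;\underline{\Lambda}_{\tau(\beta)}) = 0$. The only step with any content is the appeal to Serre's theorem to pass from finitely generated homology of $N$ to finitely generated higher homotopy; everything else is formal, so I do not anticipate a real obstacle here.
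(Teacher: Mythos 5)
Your proof is correct and follows essentially the same route as the paper: identify $\alpha$ as a transgression $\tau(\beta)$ using that $\tau$ is an isomorphism for simply connected $N$, invoke Serre's finiteness theorem (the paper cites Spanier 9.6.16) to get finitely generated $\pi_m(N)$, and then apply Corollary \ref{CorollaryVanishingTheorem}.
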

\begin{proof}
$N$ is simply connected so its homotopy groups are finitely
generated in each dimension because its homology groups are
finitely generated by compactness (see \cite[9.6.16]{Spanier}).
Since $N$ is simply connected, any $\alpha$ in $H^1(\LN)$ is the
transgression of some $\beta\in H^2(N)$. The result now follows
from Corollary \ref{CorollaryVanishingTheorem}.
\end{proof}
%
\subsection{Proof of the Main Corollary}\label{SubsectionMainCorollary}
\begin{corollary}\label{CorollaryMainCorollaryProof}
Let $N^n$ be a closed simply connected manifold. Let $L \to DT^*N$
be an exact Lagrangian embedding. Then the image of $p_* \co
\pi_2(L) \to \pi_2(N)$ has finite index and $p^* \co H^2(N)\to
H^2(L)$ is injective.
\end{corollary}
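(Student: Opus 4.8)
The plan is to combine Theorem \ref{TheoremMainTheorem} (twisted Viterbo functoriality for exact Lagrangians) with the vanishing result Corollary \ref{Corollary2VanishingTheorem}, exactly as sketched in the introduction. First I would take a nonzero class $\beta \in H^2(N)$; if $H^2(N)=0$ the statement about $p^*$ is vacuous and the statement about $\pi_2(N)$ follows since then $\operatorname{Hom}(\pi_2(N),\Z)=0$ forces $\pi_2(N)$ finite, so assume $H^2(N)\neq 0$. Since $N$ is simply connected, $\tau\co H^2(N)\to \operatorname{Hom}(\pi_2(N),\Z)\cong H^1(\LN)$ is an isomorphism, so $\alpha=\tau(\beta)\neq 0$. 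I would argue by contradiction: suppose $\tau(p^*\beta)=0\in H^1(\mathcal{L}_0 L)$, which by naturality of the transgression means $(\mathcal{L}p)^*\alpha = 0$, so the Novikov bundle $\underline{\Lambda}_{(\mathcal{L}p)^*\alpha}$ on $\mathcal{L}_0 L$ is trivial and restricts trivially to $L$.

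Next I would feed $\alpha$ into the commutative diagram of Theorem \ref{TheoremMainTheorem}, restricted to contractible loops. Because $c^*\alpha = \tau(\beta)|_{\pi_1(N)} = 0$, the bottom map is $p_!\otimes 1\co H_*(L)\otimes\Lambda \leftarrow H_*(N)\otimes\Lambda$, and because $(\mathcal{L}p)^*\alpha=0$ the left column target is $H_*(\mathcal{L}_0 L)\otimes\Lambda$ with $c_*$ being the (injective) ordinary inclusion of constants tensored with $\Lambda$ — injectivity of $c_*\co H_*(L)\to H_*(\mathcal{L}_0L)$ because the evaluation $q\co \mathcal{L}_0 L\to L$ retracts it. Now chase the fundamental class $[N]\in H_n(N)$ (here I would use the unoriented refinement from section \ref{SubsectionUnorientedTheory}, or simply $\Z/2$ coefficients, to avoid orientability hypotheses, exactly as the introduction promises; the diagram and Poincaré duality all go through with $\Z/2$). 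On one hand $p_![N] = [L]$, so $\mathcal{L}p_!\,c_*[N] = c_*\,p_![N] = c_*[L]\neq 0$ in $H_n(\mathcal{L}_0 L)\otimes\Lambda$. On the other hand, $c_*[N]\in H_n(\mathcal{L}_0 N;\underline{\Lambda}_{\tau(\beta)})$, and this group vanishes by Corollary \ref{Corollary2VanishingTheorem}, so $c_*[N]=0$ and hence $\mathcal{L}p_!\,c_*[N]=0$, a contradiction. Therefore $\tau(p^*\beta)\neq 0$ for every nonzero $\beta$, i.e. $\tau\circ p^*\co H^2(N)\to H^1(\mathcal{L}_0 L)$ is injective.

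Finally I would unwind this into the two asserted statements using the naturality square relating $\tau$ on $N$ and on $L$ with $p^*$ and $(\mathcal{L}p)^*$: since $\tau$ on $N$ is an isomorphism and $\tau\circ p^*$ is injective, $p^*\co H^2(N)\to H^2(L)$ is injective; and the induced map $\operatorname{Hom}(\pi_2(N),\Z)\to\operatorname{Hom}(\pi_2(L),\Z)$ is injective, which by dualizing means the cokernel of $p_*\co\pi_2(L)\to\pi_2(N)$ has no free part, i.e. $\operatorname{Hom}(\operatorname{coker} p_*,\Z)=0$; since $\pi_2(N)$ is finitely generated (compact simply connected $N$ has finitely generated homotopy), $\operatorname{coker} p_*$ is finite, i.e. the image of $p_*$ has finite index. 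The main obstacle is really bookkeeping rather than a new idea: one must be careful that restricting the Main Theorem diagram to contractible loops is legitimate (not every loop in $T^*L$ contractible in $T^*N$ is contractible in $T^*L$, as noted after Theorem \ref{TheoremMainTheorem}), that the transgression on $L$ only sees $\operatorname{Hom}(\pi_2(L),\Z)\subset H^1(\mathcal{L}_0 L)$ so that $\tau(p^*\beta)=0$ is genuinely the right hypothesis to contradict, and that the whole argument survives passing to $\Z/2$ coefficients so no orientability is needed. I expect verifying this compatibility of coefficients and of the transgression naturality square to be where the care is concentrated.
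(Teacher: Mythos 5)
Your proposal follows essentially the same argument as the paper's own proof: contradiction via the vanishing result (Corollary~\ref{Corollary2VanishingTheorem}) applied to the functoriality diagram of Theorem~\ref{TheoremMainTheorem}, then unwinding the injectivity of $\tau\circ p^*$ through the transgression naturality square. The only differences are cosmetic and in the direction of extra care (handling $H^2(N)=0$ explicitly, spelling out the deduction that $\operatorname{Hom}(\operatorname{coker}p_*,\Z)=0$ plus finite generation gives finite cokernel, and flagging the $\Z/2$ variant for the unoriented case, which the paper defers to Section~\ref{SubsectionUnorientedTheory}); these are all correct.
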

\begin{proof}
A non-zero class $\beta\in H^2(N)$ yields a non-zero transgression
$\tau(\beta)\in H^1(\LN)$ (see \ref{SubsectionTransgressions}).
Suppose by contradiction that $\tau(p^*\beta)=0$. Then the local
system $(\mathcal{L}p)^*\underline{\Lambda}_{\tau(\beta)}$ is
trivial (see \ref{SubsectionNovikovbundles}). Moreover
$c^*\tau(\beta)=0$ since $\tau(\beta)$ vanishes on $\pi_1(N)$.
Therefore the diagram of Theorem \ref{TheoremMainTheorem},
restricted to contractible loops, becomes
$$
\xymatrix{ H_{*}(\LL) \otimes \Lambda \ar@{<-_{)}}_{c_*}@<-1ex>[d]
\ar@{<-}[r]^{\mathcal{L}p_!} &
H_*(\LN;\underline{\Lambda}_{\tau(\beta)}) \ar@{<-}^{c_*}@<1ex>[d] \\
H_*(L) \otimes \Lambda \ar@{<<-}_{q_*}@<-1ex>[u]\ar@{<-}[r]^{p_!}
& H_*(N) \otimes \Lambda }
$$
where $q \co \mathcal{L}_0 L \to L$ is the evaluation at $0$. By
Corollary \ref{Corollary2VanishingTheorem},
$H_*(\LN;\underline{\Lambda}_{\tau(\beta)})=0$, so the fundamental
class $[N] \in H_n(N)$ maps to $c_*[N]=0$. But
$\mathcal{L}p_!(c_*[N])=c_*p_{!}[N]=c_*[L]\neq 0$ since $c_*$ is
injective on $H_*(L)$.

Therefore $\tau(p^*\beta)$ cannot vanish, and so $\tau_b \circ p^*
\co  H^2 N \to H^1(\Omega L)$ is injective. Consider the
commutative diagram
$$
\xymatrix{ H^2(N) \ar@{->}^{p^*}[d]\ar@{->}[r]^-{\tau_b}_-{\sim} &
\textnormal{Hom}(\pi_2(N),\Z) \cong H^1(\Omega N) \ar@{->}@<-5ex>^{(\Omega p)^*}[d] \\
H^2(L) \ar@{->}[r]^-{\tau_b} & \textnormal{Hom}(\pi_2(L),\Z) \cong
H^1(\Omega L) }
$$
where the top map $\tau_b$ is an isomorphism since $N$ is simply
connected. We deduce from the injectivity of $\tau_b \circ
p^*=(\Omega p)^*\circ \tau_b$ that $p^* \co H^2(N)\to H^2(L)$ and
$\textnormal{Hom}(\pi_2(N),\Z) \to \textnormal{Hom}(\pi_2(L),\Z)$
are both injective, so in particular the image of $p_* \co
\pi_2(L) \to \pi_2(N)$ has finite index.
\end{proof}

%
\section{Non-simply connected cotangent bundles}
\label{SubsectionNonSimplyConnectedCotangentBundles}
%

We will prove that for non-simply connected $N$ the map $\pi_2(L)
\to \pi_2(N)$ still has finite index provided that the homotopy
groups $\pi_m(N)$ are finitely generated for each $m \geq 2$.

This time we consider transgressions induced from the universal
cover $\widetilde{N}$ of $N$,
$$\tau \co  H^2(\widetilde{N}) \to H^1(\mathcal{L}\widetilde{N})=H^1(\LN)\cong \textrm{Hom}(\pi_2 N,\Z).
$$

The homomorphism $ \widetilde{f}_* \co  \pi_2(\widetilde{N})=
\pi_2(N) \to \Z$ corresponding to such a transgression
$\tau(\widetilde{\beta})$ is induced by a classifying map
$\widetilde{f} \co  \widetilde{N}\to \CP$ for
$\widetilde{\beta}\in H^2(\widetilde{N})$. Since $\Omega
\widetilde{N} = \Omega_0 N$ and $\mathcal{L} \widetilde{N} = \LN$,
the transgressions $\tau_b(\widetilde{\beta})$ and
$\tau(\widetilde{\beta})$ define cyclic covers $\overline{\Omega_0
N}$ and $\overline{\LN}$. We will use these in the construction of
the Novikov homology.

\begin{theorem}
Let $N$ be a compact manifold with finitely generated $\pi_m(N)$
for each $m\geq 2$. If $\tau(\widetilde{\beta})\neq 0$ then
$\overline{\LN}$ is of finite type and
$H_*(\LN;\underline{\Lambda}_{\tau(\widetilde{\beta})})=0$.
\end{theorem}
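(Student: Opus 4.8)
The plan is to repeat, with only cosmetic changes, the arguments proving Theorem~\ref{TheoremCyclicCoverFG} and Corollary~\ref{CorollaryVanishingTheorem}, but with the \emph{simply connected} manifold $\widetilde{N}$ in the role played there by $N$. Recall from \S\ref{SubsectionNonSimplyConnectedCotangentBundles} that $\Omega\widetilde{N}=\Omega_0 N$, that $\mathcal{L}\widetilde{N}=\LN$, and that $\pi_m(\widetilde{N})=\pi_m(N)$ for $m\geq 2$, so the hypothesis is that these groups are finitely generated. Moreover $\tau(\widetilde{\beta})$ corresponds to a nonzero homomorphism $\widetilde{f}_*\co\pi_2(N)\to\pi_2(\CP)\cong\Z$, where $\widetilde{f}\co\widetilde{N}\to\CP$ classifies $\widetilde{\beta}$, and $\tau_b(\widetilde{\beta})\in H^1(\Omega_0 N)$ is classified by $\Omega\widetilde{f}$.

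For the finite-type statement I would run the five claims of the proof of Theorem~\ref{TheoremCyclicCoverFG} with $\widetilde{N}$ in place of $N$. From the evaluation fibration $\Omega_0 N\to\LN\to N$ and the transgressions $\tau_b(\widetilde{\beta}),\tau(\widetilde{\beta})$ one obtains a fibration $\overline{\Omega_0 N}\to\overline{\LN}\to N$; since $N$ is compact, hence homotopy equivalent to a finite CW complex, a Leray--Serre argument reduces finite-typeness of $\overline{\LN}$ to that of $\overline{\Omega_0 N}$. After replacing $\widetilde{N}$ by a homotopy equivalent total space of a fibration $\widetilde{f}\co\widetilde{N}\to\CP$ with fibre $\widetilde{F}$, the homotopy long exact sequence of $\widetilde{F}\to\widetilde{N}\to\CP$ gives $\pi_1(\widetilde{F})=\textnormal{Coker}(\widetilde{f}_*\co\pi_2 N\to\Z)$, which is finite since $\widetilde{f}_*\neq 0$; $\pi_2(\widetilde{F})=\ker(\widetilde{f}_*)$, finitely generated as a subgroup of the finitely generated abelian group $\pi_2 N$; and $\pi_m(\widetilde{F})=\pi_m N$ for $m\geq 3$, finitely generated by hypothesis. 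Looping, $\Omega\widetilde{f}\co\Omega_0 N\to\Omega\CP$ classifies $\tau_b(\widetilde{\beta})$, its fibre is the union of the finitely many components of $\Omega\widetilde{F}$ indexed by $\textnormal{Coker}(\widetilde{f}_*)$, each homotopy equivalent to $\Omega_0\widetilde{F}=\Omega\,\widetilde{\widetilde{F}}$, and (pulling back the contractible cover $\R\to S^1$ of $\Omega\CP$) $\overline{\Omega_0 N}$ is homotopy equivalent to this fibre. Finally $\widetilde{\widetilde{F}}$ is simply connected with finitely generated homotopy groups, hence of finite type, so $\Omega\,\widetilde{\widetilde{F}}$ is of finite type by the path-space fibration and Serre spectral sequence argument of Claim~5; therefore $\overline{\Omega_0 N}$, and then $\overline{\LN}$, are of finite type.

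For the vanishing I would copy the proof of Corollary~\ref{CorollaryVanishingTheorem} verbatim. Since $\Z[[t]]$ is flat over $\Z[t]$, the Novikov ring $\Lambda$ is flat over $R=\Z[t,t^{-1}]$, so
$$
H_*(\LN;\underline{\Lambda}_{\tau(\widetilde{\beta})})\cong H_*(C_*(\overline{\LN})\otimes_R\Lambda)\cong H_*(\overline{\LN})\otimes_R\Lambda .
$$
By the finite-type statement each $H_k(\overline{\LN})$ is finitely generated over $\Z$, hence over $\Z[t]$, and $t$ acts invertibly on it; as $t$ lies in the radical of $\Z[[t]]$, Nakayama's lemma forces $H_k(\overline{\LN})\otimes_{\Z[t]}\Z[[t]]=0$, and localizing at $t$ gives $H_k(\LN;\underline{\Lambda}_{\tau(\widetilde{\beta})})=0$.

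The one step that really needs care --- and the only place where the non-simply-connectedness of $N$ is felt --- is the passage to the fibration $\overline{\Omega_0 N}\to\overline{\LN}\to N$: one must check that forming the cyclic cover of $\LN$ determined by $\tau(\widetilde{\beta})$ is compatible with the evaluation fibration $\LN\to N$, so that this cover is ``vertical'', with fibre the cover $\overline{\Omega_0 N}$ of $\Omega_0 N$. This works because $\tau(\widetilde{\beta})$ is a transgression: it restricts to $\tau_b(\widetilde{\beta})$ on $\Omega_0 N$ and vanishes on the image of $\pi_1(N)$ in $\pi_1(\LN)$. A minor point of bookkeeping is to confirm that, under the identifications $\mathcal{L}\widetilde{N}=\LN$ and $H^1(\mathcal{L}\widetilde{N})=H^1(\LN)$ of \S\ref{SubsectionNonSimplyConnectedCotangentBundles}, the class $\tau(\widetilde{\beta})$ is exactly the one used to form $\overline{\LN}$ and to twist coefficients; granting this, everything else is identical to the simply connected case and presents no new difficulty.
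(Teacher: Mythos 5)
Your proposal is correct and follows the paper's own argument essentially verbatim: both reduce finite-typeness of $\overline{\LN}$ to that of the fibre $\overline{\Omega_0 N}\simeq\Omega F$ (with $F$ the fibre of $\widetilde f\co\widetilde N\to\CP$), identify $\pi_0(\Omega F)$ with the finite cokernel of $\widetilde f_*$, pass to the universal cover of $F$ to invoke Spanier's finite-type criteria via the path-space fibration, and then conclude vanishing by flatness of $\Lambda$ over $R$ and Nakayama. The only differences are notational (your $\widetilde F$, $\widetilde{\widetilde F}$ are the paper's $F$, $\widetilde F$) and that you spell out the homotopy-group computation that the paper delegates to ``the same argument as in Theorem~\ref{TheoremCyclicCoverFG}.''
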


\begin{proof}
Revisit the proof of Theorem \ref{TheoremCyclicCoverFG}. It
suffices to prove that $\overline{\Omega_0 N}$ has finite type.
This time we have the commutative diagram
$$
\xymatrix{
\Omega F \ar@{->}[d]\ar@{->}[r]^-{\overline{\Omega j}} &
\overline{\Omega_0 N} \ar@{->}[d]\ar@{->}[r]^{\overline{\Omega
\widetilde{f}}} &
\overline{\Omega \CP}\simeq \R \ar@{->}[d]\\
\Omega F \ar@{->}[d]\ar@{->}[r]^-{\Omega j} & \Omega
\widetilde{N}=\Omega_0 N
\ar@{->}[d]\ar@{->}[r]^-{\Omega \widetilde{f}} & \Omega\CP \simeq S^1\ar@{->}[d]\\
F \ar@{->}[r]^-{j} & \widetilde{N} \ar@{->}[r]^-{\widetilde{f}} &
\CP}
$$
Since $\Omega F \simeq \overline{\Omega_0 N}$, it suffices to show
that $\Omega F$ has finite type. Observe that
$$\Omega F \cong \oplus_K \Omega_0 F$$
where $K = \textnormal{Coker}(\widetilde{f}_* \co \pi_2 N \to
\pi_2 \CP)$ is a finite set since $\widetilde{f}_*\neq 0$. So we
just need to show that $\Omega_0 F = \Omega \widetilde{F}$ is of
finite type. The same argument as in Theorem
\ref{TheoremCyclicCoverFG} proves that $\Omega \widetilde{F}$ is
of finite type iff $\pi_m N=\pi_m \widetilde{N}$ is finitely
generated for each $m\geq 2$. The same proof as for Corollary
\ref{CorollaryVanishingTheorem} yields the vanishing of the
Novikov homology.
\end{proof}

\begin{corollary}
Let $N$ be a closed manifold with finitely generated $\pi_m(N)$
for each $m\geq 2$. Let $L \to DT^*N$ be an exact Lagrangian
embedding. Then the image of $p_* \co \pi_2(L) \to \pi_2(N)$ has
finite index and $\widetilde{p}^* \co H^2(\widetilde{N}) \to
H^2(\widetilde{L})$ is injective.
\end{corollary}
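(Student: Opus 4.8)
The plan is to rerun the proof of Corollary~\ref{CorollaryMainCorollaryProof}, but with the universal covers $\widetilde{N}$ and $\widetilde{L}$ playing the roles of $N$ and $L$, and with the vanishing statement established just above (for transgressions $\tau(\widetilde{\beta})$ of nonzero $\widetilde{\beta}\in H^2(\widetilde{N})$) replacing Corollary~\ref{Corollary2VanishingTheorem}. First I would fix a lift $\widetilde{p}\co \widetilde{L}\to\widetilde{N}$ of $p$ through the covering projections; such a lift exists because $\widetilde{L}$ is simply connected, and the injectivity of $\widetilde{p}^*$ on $H^2$ does not depend on the choice, since any two lifts differ by deck transformations of $\widetilde{N}$ (or, after precomposition, of $\widetilde{L}$), which act by automorphisms on $H^2(\widetilde{N})$ and $H^2(\widetilde{L})$.

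Next, let $\widetilde{\beta}\in H^2(\widetilde{N})$ be nonzero. Because $\widetilde{N}$ is simply connected, $\tau\co H^2(\widetilde{N})\to H^1(\mathcal{L}\widetilde{N})=H^1(\LN)$ is an isomorphism, so $\alpha:=\tau(\widetilde{\beta})\neq 0$; as in~\ref{SubsectionTransgressions} it corresponds to $\widetilde{f}_*\co \pi_2(N)\to\Z$ and vanishes on $\pi_1(N)$, hence $c^*\alpha=0$. Suppose, for contradiction, that $\widetilde{p}^*\widetilde{\beta}=0$ in $H^2(\widetilde{L})$. By naturality of the transgression along $\widetilde{p}$, together with the identifications $H^1(\mathcal{L}\widetilde{N})=H^1(\LN)$ and $H^1(\mathcal{L}\widetilde{L})=H^1(\LL)$ from the start of this section and the computation of pulled-back Novikov bundles in~\ref{SubsectionNovikovbundles}, this forces $(\mathcal{L}p)^*\alpha=0\in H^1(\LL)$, so that $(\mathcal{L}p)^*\underline{\Lambda}_{\alpha}$ and its restriction to $L$ are trivial. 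Feeding $\alpha$ into Theorem~\ref{TheoremMainTheorem} (extended by zero to the other components of $\mathcal{L}N$) and restricting to contractible loops, the diagram collapses to
$$
\xymatrix{ H_{*}(\LL) \otimes \Lambda \ar@{<-}_{c_*}@<-1ex>[d]
\ar@{<-}[r]^{\mathcal{L}p_!} &
H_*(\LN;\underline{\Lambda}_{\tau(\widetilde{\beta})}) \ar@{<-}^{c_*}@<1ex>[d] \\
H_*(L) \otimes \Lambda \ar@{<-}_{q_*}@<-1ex>[u]\ar@{<-}[r]^{p_!}
& H_*(N) \otimes \Lambda }
$$
with $q\co \LL\to L$ evaluation at $0$, so the left-hand $c_*$ is split injective. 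By the theorem proved just above, $H_*(\LN;\underline{\Lambda}_{\tau(\widetilde{\beta})})=0$, hence $c_*[N]=0$; but commutativity gives $\mathcal{L}p_!(c_*[N])=c_*(p_![N])=c_*[L]\neq 0$, a contradiction. (If $N$ is non-orientable one argues identically with $\Z/2$ coefficients, or appeals to~\ref{SubsectionUnorientedTheory}.) Therefore $\widetilde{p}^*\co H^2(\widetilde{N})\to H^2(\widetilde{L})$ is injective, which is the second assertion.

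For the finite-index assertion I would then invoke the naturality square of the based transgression,
$$
\xymatrix{ H^2(\widetilde{N}) \ar[d]_{\widetilde{p}^*}\ar[r]^-{\tau_b}_-{\sim} &
\textnormal{Hom}(\pi_2(N),\Z) \ar[d]^{(\Omega\widetilde{p})^*} \\
H^2(\widetilde{L}) \ar[r]^-{\tau_b} & \textnormal{Hom}(\pi_2(L),\Z) }
$$
whose top arrow is an isomorphism since $\widetilde{N}$ is simply connected (there $\tau_b$ is the composite of the universal-coefficients and Hurewicz isomorphisms). Injectivity of $\widetilde{p}^*$ then yields injectivity of $(\Omega\widetilde{p})^*=\textnormal{Hom}(p_*,\Z)\co \textnormal{Hom}(\pi_2(N),\Z)\to\textnormal{Hom}(\pi_2(L),\Z)$, where $p_*\co \pi_2(L)\to\pi_2(N)$. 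Since $\pi_2(N)$ is finitely generated by hypothesis, a subgroup of it has finite index exactly when no nonzero homomorphism to $\Z$ annihilates it; applied to the image of $p_*$ this shows that $p_*$ has finite-index image.

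I expect the main obstacle to be the bookkeeping in the second paragraph: making precise the identifications between the free loop spaces $\mathcal{L}\widetilde{N},\mathcal{L}\widetilde{L}$ of the universal covers and the contractible loop spaces $\LN,\LL$ (and the maps between them), so that the local system $\underline{\Lambda}_{\tau(\widetilde{\beta})}$ appearing both in Theorem~\ref{TheoremMainTheorem} and in the preceding vanishing theorem is literally the transgression bundle of $\widetilde{\beta}$, and so that the hypothesis $\widetilde{p}^*\widetilde{\beta}=0$ genuinely trivializes $(\mathcal{L}p)^*\underline{\Lambda}_{\tau(\widetilde{\beta})}$. With that in hand the rest is a line-by-line repetition of Corollary~\ref{CorollaryMainCorollaryProof}.
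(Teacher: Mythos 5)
Your proposal is correct and follows essentially the same route as the paper, which simply says the proof is analogous to Corollary~\ref{CorollaryMainCorollaryProof}: pass to transgressions $\tau(\widetilde{\beta})$ of nonzero classes $\widetilde{\beta}\in H^2(\widetilde{N})$, use the vanishing of $H_*(\LN;\underline{\Lambda}_{\tau(\widetilde{\beta})})$ from the preceding theorem together with Theorem~\ref{TheoremMainTheorem} to force $\tau(\widetilde{p}^*\widetilde{\beta})\ne 0$, and then read off injectivity of $\widetilde{p}^*$ and of $\textnormal{Hom}(p_*,\Z)$ from the naturality square for $\tau$. Your filling in of the details (choice of lift $\widetilde{p}$, $c^*\alpha=0$, extending $\alpha$ by zero to the non-contractible components, and the finite-generation argument for the finite-index conclusion) is accurate and matches what the paper implicitly relies on.
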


\begin{proof}
The proof is analogous to that of Corollary
\ref{CorollaryMainCorollaryProof}: $(\mathcal{L} p)^*$ in the
diagram
$$
\xymatrix{ H^2(\widetilde{N})
\ar@{->}^{\widetilde{p}^*}[d]\ar@{->}[r]^-{\tau}_-{\sim} &
\textnormal{Hom}(\pi_2(N),\Z) \cong H^1(\LN) \ar@{->}@<-5ex>^{(\mathcal{L} p)^*}[d] \\
H^2(\widetilde{L}) \ar@{->}[r]^-{\tau}_-{\sim} &
\textnormal{Hom}(\pi_2(L),\Z) \cong H^1(\LL) }
$$
is injective because if, by contradiction,
$\tau(\widetilde{p}^*\widetilde{\beta}) \in H^1(\LL)$ vanished
then the functoriality diagram of Theorem \ref{TheoremMainTheorem}
would not commute.
\end{proof}

%
\section{Unoriented theory}
\label{SubsectionUnorientedTheory}
%
So far we assumed that all manifolds were oriented. By using
$\Z_2=\Z/2\Z$ coefficients instead of $\Z$ coefficients one no
longer needs the Floer and Morse moduli spaces to be oriented in
order to define the differentials and continuation maps. For the
twisted setup, we change the Novikov ring to
$$
\Lambda = \Z_2((t))=\Z_2[[t]][t^{-1}],
$$
the ring of formal Laurent series with $\Z_2$ coefficients. The
bundle $\underline{\Lambda}_{\alpha}$ is now a bundle of
$\Z_2((t))$ rings, however the singular cocycle $\alpha$ is still
integral: $[\alpha] \in H^1(\LN;\Z)$.

Using these coefficients, all our theorems hold true without the
orientability assumption on $N$ and $L$. The following is an
interesting application of Corollary
\ref{CorollaryMainCorollaryProof} in this setup.

\begin{corollary}
There are no unorientable exact Lagrangians in $T^*S^2$.
\end{corollary}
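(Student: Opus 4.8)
The plan is to obtain this as an immediate consequence of Corollary~\ref{CorollaryMainCorollaryProof}, which by Section~\ref{SubsectionUnorientedTheory} remains valid when $L$ is not assumed orientable. Suppose for contradiction that $L\subset T^*S^2$ is a closed exact Lagrangian which is non-orientable; after rescaling the metric we may assume $L\subset DT^*S^2$. Since $\dim L=\dim S^2=2$, the manifold $L$ is a closed non-orientable surface, so by the classification of surfaces $L\cong\#^k\R\mathbb{P}^2$ for some $k\geq 1$.

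First I would record the only piece of topology needed. For $L=\#^k\R\mathbb{P}^2$ one has $H_2(L;\Z)=0$ and $H_1(L;\Z)\cong\Z^{k-1}\oplus\Z/2$, so the universal coefficient theorem gives $H^2(L;\Z)\cong\mathrm{Hom}(H_2(L;\Z),\Z)\oplus\mathrm{Ext}(H_1(L;\Z),\Z)\cong\Z/2$; in particular it is a nonzero torsion group. Now apply Corollary~\ref{CorollaryMainCorollaryProof} with the simply connected base $N=S^2$: it asserts that $p^*\co H^2(S^2;\Z)\to H^2(L;\Z)$ is injective. But $H^2(S^2;\Z)\cong\Z$ is infinite and torsion-free, hence admits no injective homomorphism into $\Z/2$. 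This contradiction shows that no such $L$ can exist, which proves the corollary.

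It is worth noting where the unoriented hypotheses actually matter and why the cited injectivity survives: in the proof of Corollary~\ref{CorollaryMainCorollaryProof} the Novikov ring $\Z_2((t))$ enters only through the Floer and Novikov complexes and through the fundamental-class step, which in the unoriented theory reads $\mathcal{L}p_!(c_*[S^2])=c_*p_![S^2]=c_*[L]\neq 0$, using the mod-$2$ fundamental class $[L]\in H_2(L;\Z_2)$ together with the fact that $c_*$ is split injective (split by evaluation at $0$); the transgression square that then yields injectivity of $p^*$ still involves ordinary integral cohomology of $S^2$ and $L$. I do not expect any real obstacle here — all the analytic and algebraic content is already contained in Corollary~\ref{CorollaryMainCorollaryProof}, and what remains is the elementary surface topology above. (Alternatively one could argue via the $\pi_2$ statement: $\#^k\R\mathbb{P}^2$ is aspherical for $k\geq 2$, so $\pi_2(L)=0$ and its image in $\pi_2(S^2)\cong\Z$ cannot have finite index; only $L=\R\mathbb{P}^2$, with $\pi_2\cong\Z$, escapes this case, and it is then ruled out by the $H^2$ argument above.)
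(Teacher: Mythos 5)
Your proof is correct and takes essentially the same route as the paper: both reduce to the unoriented version of Corollary~\ref{CorollaryMainCorollaryProof} and exploit that $H^2(L;\Z)\cong\Z/2$ is torsion while $H^2(S^2;\Z)\cong\Z$ is not. The paper phrases the contradiction as the vanishing of the transgression on $H^2(L;\Z)$ (since $\mathrm{Hom}(\pi_2(L),\Z)$ is torsion-free), whereas you contradict the injectivity of $p^*$ directly; these are the same obstruction, since $\tau_b\circ p^*$ factors through $H^2(L;\Z)$.
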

\begin{proof}
For unorientable $L$, $H^2(L;\Z)=\Z_2$. Therefore the
transgression $\tau$ vanishes on $H^2(L;\Z)$ since its range
$\textrm{Hom}(\pi_2(L),\Z)$ is torsion-free. But for $S^2$ there
is a non-zero transgression. This contradicts the proof of
Corollary \ref{CorollaryMainCorollaryProof}.
\end{proof}

%
%
%
\bibliographystyle{gtart}

\begin{thebibliography}{10}

\bibitem{Abbondandolo-Schwarz}
A. Abbondandolo and M. Schwarz, \emph{On the Floer homology of
cotangent bundles}, Comm. Pure Appl. Math. 59 (2006), no. 2,
254--316.

\bibitem{Farber}
M. Farber, \emph{Topology of closed one-forms}, Mathematical
Surveys and Monographs, 108, AMS, 2004.

\bibitem{Fukaya-Seidel-Smith}
K. Fukaya, P. Seidel and I. Smith, \emph{Exact Lagrangian
submanifolds in simply-connected cotangent bundles}, Invent. Math.
172 (2008), no. 1, 1--27.

\bibitem{Matsumura}
H. Matsumura, \emph{Commutative Ring Theory}, Cambridge University
Press, 1986.

\bibitem{Milnor}
J. W. Milnor, \emph{Infinite cyclic coverings}, 1968 Conference on
the Topology of Manifolds, Prindle, Weber and Schmidt, Boston,
(1968), 115--133.

\bibitem{Nadler}
D. Nadler, \emph{Microlocal branes are constructible sheaves}.
Preprint arXiv:math/0612399, 2006.

\bibitem{Salamon-Weber}
D. Salamon and J. Weber, \emph{Floer homology and the heat flow},
Geom. Funct. Anal. 16 (2006), no. 5, 1050--1138.

\bibitem{Salamon}
D. Salamon, \emph{Lectures on Floer homology.  Symplectic geometry
and topology (Park City, UT, 1997)},  143--229, IAS/Park City
Math. Ser., 7, Amer. Math. Soc., Providence, RI, 1999.

\bibitem{Seidel}
P. Seidel, \emph{A biased view of symplectic cohomology}, Current
Developments in Mathematics, Volume 2006 (2008), 211--253.

\bibitem{Spanier}
E. H. Spanier, \emph{Algebraic Topology}, Springer-Verlag, NY 1966.

\bibitem{Viterbo1} C. Viterbo, \emph{Functors and computations in Floer homology with
applications. I.}, Geom. Funct. Anal. 9 (1999), no. 5, 985--1033.

\bibitem{Viterbo2} C. Viterbo, \emph{Functors and computations in Floer homology with
applications, Part II}. Preprint, 1996.

\bibitem{Viterbo3} C. Viterbo, \emph{Exact Lagrange submanifolds, periodic
orbits and the cohomology of free loop spaces.},  J. Differential
Geom. 47 (1997), no. 3, 420--468.

\bibitem{Whitehead}
G. W. Whitehead, \emph{Elements of Homotopy Theory},
Springer-Verlag, NY 1978.

\end{thebibliography}

\end{document}